\newtheorem{theorem}{Theorem}[section]
\newtheorem{lemma}[theorem]{Lemma}
\newtheorem{proposition}[theorem]{Proposition}
\theoremstyle{definition}
\newtheorem{definition}[theorem]{Definition}
\newtheorem{example}[theorem]{Example}
\newtheorem{remark}[theorem]{Remark}
\numberwithin{equation}{section}
\author{Laurent Moonens}
\author{Emmanuel Russ}
\author{Heli Tuominen}
\newcommand\rn{\mathbb R^n}
\newcommand\re{\mathbb R}
\newcommand\n{\mathbb N}
\newcommand\ph{\varphi}
\newcommand\cM{\operatorname{\mathcal M}}
\newcommand\cB{\mathcal B}
\newcommand\cA{\mathcal A}
\newcommand\cG{\mathcal G}
\newcommand\dist{\operatorname{dist}}
\newcommand\grad{\nabla}
\providecommand{\ch}[1]{\text{\raise 2pt \hbox{$\chi$}\kern-0.2pt}_{#1}}
\providecommand{\vint}[1]{\mathchoice
          {\mathop{\vrule width 5pt height 3 pt depth -2.5pt
                  \kern -9.5pt \kern 1pt\intop}\nolimits_{\kern -5pt{#1}}}%
          {\mathop{\vrule width 5pt height 3 pt depth -2.6pt
                  \kern -6pt \intop}\nolimits_{\kern -3pt{#1}}}%
          {\mathop{\vrule width 5pt height 3 pt depth -2.6pt
                  \kern -6pt \intop}\nolimits_{\kern -3pt{#1}}}%
          {\mathop{\vrule width 5pt height 3 pt depth -2.6pt
                  \kern -6pt \intop}\nolimits_{\kern -3pt{#1}}}}
\newcommand{\N}{\mathbb{N}}     
\newcommand{\R}{\mathbb{R}}     
\newcommand{\calE}{\mathscr{E}}
\newcommand{\calH}{\mathscr{H}}
\DeclareMathOperator{\Capa}{Cap}			
\DeclareMathOperator{\capa}{cap}			
\DeclareMathOperator{\diver}{div}			
\DeclareMathOperator{\Lip}{Lip}				
\DeclareMathOperator{\capp}{cap}				
\DeclareMathOperator{\supp}{supp}			
\DeclareMathOperator{\loc}{loc}			
\newcommand{\la}{\langle}
\newcommand{\ra}{\rangle}
\renewcommand{\geq}{\geqslant}
\renewcommand{\leq}{\leqslant}
\renewcommand{\epsilon}{\varepsilon}
\newcommand{\liminfe}{\mathop{\underline{\lim}}}
\newcommand{\supess}{\mathop{\mathrm{ess\,sup}}}
\newcommand{\infess}{\mathop{\mathrm{ess\,inf}}}
\begin{document}

\title[Singularities for $\diver v=f$ in weighted spaces]{Removable singularities for $\diver v=f$\\in weighted Lebesgue spaces}

\maketitle

\begin{abstract}
Let $w\in L^1_{loc}(\R^n)$ be a	positive weight. Assuming that a doubling condition and an $L^1$ Poincar\'e inequality on balls for the measure $w(x)dx$, as well as a growth condition on $w$, we prove that the compact subsets of $\R^n$ which are removable for the distributional divergence in $L^{\infty}_{1/w}$ are exactly those with vanishing weighted Hausdorff measure. We also give such a characterization for $L^p_{1/w}$, $1<p<+\infty$, in terms of capacity. This generalizes results due to Phuc and Torres, Silhavy and the first author. 
\end{abstract}

\section{Introduction}
In the past years, \emph{removable singularities} of bounded vector fields satisfying $\diver v=0$ in the distributional sense have been studied, \emph{e.g.} by the first author \cite{M2006}, Silhavy \cite{SILHAVY} and Phuc and Torres \cite{PT}. It has been shown, in particular, that a compact set $S\subseteq\R^n$ can contain a non void support of the distributional divergence of a bounded vector field on $\R^n$, if and only if its $(n-1)$-dimensional Hausdorff measure is positive. 
As a matter of fact, all those results have immediate counterparts for vector fields defined on an open subset $\Omega$ of $\R^n$, satisfying the equation $\diver v=f$, where $f$ is a locally integrable function on $\Omega$, in case the latter equation admits at least one solution in $L^\infty(\Omega)$.

Given $n/(n-1)<p<\infty$, Phuc and Torres in \cite{PT} showed a corresponding result for $L^p$-vector fields. More precisely, given an open set $\Omega\subseteq\R^n$ and a locally integrable function $f$ in $\Omega$ for which the equation $\diver v=f$ is solvable in $L^p(\Omega)$, their results imply that a compact set $S\subseteq \Omega$ contains a non void support of the distributional divergence of an $L^p$-vector field in $\R^n$, if and only if $\capp_{p'}(S)>0$, where $\capp_{p'}$ is the capacity associated to the Sobolev space $W^{1,p'}(\R^n)$ (see Definition~\ref{Definition: Sobo capacity} below) and $p'$ is the conjugate exponent to $p$ verifying $1/p +1/p'=1$.

On the other hand, given a (bounded) domain $\Omega\subset\R^n$, it may happen that it is \emph{not} possible to find a constant $C>0$ such that given any $f\in L^\infty(\Omega)$, the equation
$$
\diver v=f
$$
admits a bounded solution $v\in L^\infty(\Omega,\R^n)$ satisfying $\|v\|_\infty\leq C\|f\|_\infty$. 
In fact, the existence in this context of an integrable weight $w>0$ such that the divergence operator 
acting from the weighted Lebesgue space $L^\infty_{1/w}(\Omega,\R^n)$ to the usual space $L^\infty(\Omega)$, admits a continuous right inverse, has been shown by Duran, Muschietti, the second author and Tchamitchian in \cite{DMRT} to be \emph{equivalent} to the integrability of the geodesic distance (in $\Omega$) to a fixed point $x_0\in\Omega$. Under the latter integrability property, a similar invertibility result also holds when $L^\infty_{1/w}(\Omega,\R^n)$ and $L^\infty(\Omega)$ are replaced by $L^p_{1/w}(\Omega,\R^n)$ and $L^p(\Omega)$, respectively, with $1<p<\infty$.

In order to get some understanding of how the introduction of a (locally) integrable weight $w$ influences, in the associated weighted Lebesgue spaces, the set of singularities of a vector field having a prescribed divergence, we shall assume here that $\Omega=\R^n$ and $f=0$ in the sequel (leaving the case where $\Omega$ is a bounded domain for a future work), and study first (see section~\ref{sec.linfini}) the possible sets of singularities of vector fields in $L^\infty_{1/w}(\R^n,\R^n)$ solving $\diver v=0$ (or $\diver v=f$ for some locally integrable function $f$ on $\R^n$ yielding at least a solution in $L^\infty_{1/w}(\R^n,\R^n)$), and make a similar study in appropriate weighted $L^p$ spaces.

More precisely, calling \emph{$L^p_{1/w}$-removable} any compact subset of $\R^n$ that does \emph{not} support any nonzero distributional divergence of a vector field $v\in L^p_{1/w}(\R^n,\R^n)$, we show the following result (which combines our Theorems~\ref{thm.iff infty} and \ref{rem equiv cap}).
\begin{theorem}\label{thm.fond}
Assume $S$ is a compact subset of $\R^n$.
\begin{itemize}
\item[(i)] If $p=\infty$ and if the weight $w$ is $1$-admissible and satisfies the growth condition (\ref{eq.croiss}) below, then $S$ is $L^\infty_{1/w}$-removable if and only if $\calH^h(S)=0$, where $\calH^h$ is the Hausdorff outer measure associated to $w$ as in Section~\ref{subsection.haus}.
\item[(ii)] If $1<p<\infty$ and if one has $w^{p'-1}\in A_{p'}$, then $S$ is $L^p_{1/w}$-removable if and only if one has $\Capa_{p'}^{w^{p'-1}}(S)=0$, where $\Capa_{p'}^{w^{p'-1}}$ is the Sobolev capacity associated to $w^{p'-1}$ appearing in Definition~\ref{Definition: Sobo capacity}.
\end{itemize}
\end{theorem}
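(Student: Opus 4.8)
The plan is to treat both cases through a single duality principle: a compact set $S$ fails to be $L^p_{1/w}$-removable precisely when there is a nonzero distribution $T$ with $\supp T\subseteq S$ of the form $T=\diver v$ for some admissible field $v$, and the set of such divergences is the dual of a weighted first-order space --- a weighted $\mathrm{BV}$/$\dot W^{1,1}$-type space when $p=\infty$, and the weighted Sobolev space $W^{1,p'}_{\omega}$ with $\omega=w^{p'-1}$ when $1<p<\infty$. In each case I would prove the two implications separately: vanishing of the geometric quantity ($\calH^h$ or $\Capa_{p'}^{\omega}$) forces $T=0$ (the \emph{cut-off} direction), while its positivity lets me \emph{construct} a nonzero admissible field whose divergence is carried by $S$ (the \emph{duality} direction).

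For (i), the cut-off direction is routine. Given $v\in L^\infty_{1/w}$, i.e. $|v|\le C\,w$ a.e., with $\diver v=T$ supported on $S$ and a test function $\varphi$, I would insert cut-offs $\psi_\eps$ vanishing near $S$ and equal to $1$ outside an $\eps$-neighbourhood, expand $0=\langle T,\varphi\psi_\eps\rangle=-\int v\cdot\nabla(\varphi\psi_\eps)$, and let $\eps\to0$. The only nontrivial term is $\int v\cdot\varphi\,\nabla\psi_\eps$, bounded by $\|\varphi\|_\infty\,\|v\|_{L^\infty_{1/w}}\int w\,|\nabla\psi_\eps|$; choosing the cut-offs subordinate to a cover realizing $\calH^h(S)=0$ and using the doubling property of $w$ gives $\int w\,|\nabla\psi_\eps|\lesssim\sum_i w(B(x_i,2r_i))/r_i\lesssim\sum_i h(x_i,r_i)\to0$, where the gauge $h$ fixed in Section~\ref{subsection.haus} is comparable to $w(B(x,r))/r$, so $T=0$. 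For the converse I would appeal to a weighted Frostman lemma: if $\calH^h(S)>0$ there is a nonzero measure $\mu$ on $S$ with $\mu(B(x,r))\le C\,h(x,r)\approx C\,w(B(x,r))/r$. The growth condition (\ref{eq.croiss}) is then used to realize $\mu$ as the divergence of an $L^\infty_{1/w}$ field, either via a weighted Newtonian-type potential, estimating $|v(x)|\lesssim\int_{\dist(x,S)}^{\infty} w(B(x,r))\,r^{-n-1}\,dr$ and bounding this by $w(x)$, or by checking through the $1$-Poincar\'e inequality and a weighted co-area argument that $\varphi\mapsto\int\varphi\,d\mu$ is continuous for the weighted $\dot W^{1,1}$ norm, and then inverting the divergence.

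For (ii) the same two directions appear, but the pairing is with the weighted Sobolev space. Writing $v\in L^p_{1/w}$ as $\int|v|^p w^{-1}<\infty$ and applying Hölder with exponents $p,p'$, one finds $|\int v\cdot\nabla\varphi|\le\|v\|_{L^p_{1/w}}\,(\int|\nabla\varphi|^{p'}w^{p'-1})^{1/p'}$, so the natural space for $\varphi$ is exactly $W^{1,p'}_{\omega}$ with $\omega=w^{p'-1}$; the hypothesis $w^{p'-1}\in A_{p'}$ (equivalently $w^{-1}\in A_{p}$) is precisely what makes this a well-behaved space carrying a good capacity $\Capa_{p'}^{\omega}$. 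The cut-off direction is then immediate: if $\Capa_{p'}^{\omega}(S)=0$ there are $\eta_j\in C_c^\infty$, equal to $1$ near $S$, with $\|\nabla\eta_j\|_{L^{p'}_{\omega}}\to0$ and $\eta_j\to0$ a.e.; splitting $\langle T,\varphi\rangle=\langle T,\varphi\eta_j\rangle+\langle T,\varphi(1-\eta_j)\rangle$, the second term vanishes identically and the first tends to $0$ by dominated convergence together with the Hölder bound above, whence $T=0$. For the converse I would use the capacitary duality: $\Capa_{p'}^{\omega}(S)>0$ is equivalent to the existence of a nonzero element of $(W^{1,p'}_{\omega})^*$ supported on $S$, and the surjectivity of $\diver\colon L^p_{1/w}\to(\dot W^{1,p'}_{\omega})^*$ (a weighted analogue of $\diver\colon L^p\to W^{-1,p}$ admitting a bounded right inverse) realizes any such functional as a divergence of an $L^p_{1/w}$ field.

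The main obstacle in both parts is the construction (duality) direction. For $p=\infty$ it is the verification that a measure with the weighted $(n-1)$-type growth is a divergence of an $L^\infty_{1/w}$ field: this is exactly where (\ref{eq.croiss}) and the $1$-admissibility of $w$ must be combined, and the potential estimate is delicate because the unweighted critical potential is only logarithmically divergent, so the weighted growth hypothesis has to be used to overcome it. For $1<p<\infty$ the crux is transporting the classical capacitary characterization of removable sets into the $A_{p'}$-weighted setting --- namely the surjectivity of the weighted divergence and the equivalence between positivity of $\Capa_{p'}^{\omega}$ and the existence of a nonzero supported functional --- for which I would rely on the weighted nonlinear potential theory available under the $A_{p'}$ assumption.
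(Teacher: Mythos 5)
Your overall architecture matches the paper's exactly: truncation/cut-off for sufficiency, and for necessity a Frostman-type measure realized as a divergence by viewing $\diver$ as the adjoint of the isometry $\varphi\mapsto-\nabla\varphi$ and invoking Hahn--Banach surjectivity (the paper's Lemma~\ref{lem.boel}, Theorems~\ref{thm.sol} and \ref{finite energy implies solution}). Your cut-off directions are correct; in fact your direct Lipschitz cut-off in (i), with $\int w\,|\nabla\psi_i|\le\frac1{r_i}\int_{B(x_i,2r_i)}w\le 2C_D\,h(B(x_i,r_i))$ by doubling, legitimately bypasses the paper's detour through weighted perimeters, Lemma~\ref{lem.tuominen} and Miranda's coarea formula (Propositions~\ref{prop.approx-bv} and \ref{prop.smoothing}).

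Three points, however, need repair. First, your primary construction for (i), the Newtonian potential with the pointwise bound $|v(x)|\lesssim w(x)$, does not work: decomposing $\int|x-y|^{1-n}\,d\mu(y)$ into dyadic annuli and using $\mu(B(x,r))\le C\,h(B(x,r))$ gives $|v(x)|\lesssim\sum_j\vint{B(x,2^{-j})}w$ over the scales $\dist(x,S)\lesssim2^{-j}\lesssim\diam S$, i.e.\ a bound of order $\log(1/\dist(x,S))\,w(x)$ even for $A_1$ weights. The divergence occurs at \emph{small} scales, whereas \eqref{eq.croiss} is a condition as $r\to\infty$, so it cannot repair it; this is the same obstruction as in the unweighted $L^\infty$ endpoint, and it is why the paper (like De Pauw and Phuc--Torres) uses the functional-analytic route, which is your fallback option. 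For that fallback the named tool you are missing is the boxing inequality (Theorem~\ref{thm.boxing}), which combined with the coarea formula yields $\int|\varphi|\,d\mu\le M\|\nabla\varphi\|_{1,w}$ (Proposition~\ref{prop.dual}). Second, you misplace the role of \eqref{eq.croiss}: in the paper it enters the weighted Frostman lemma (Lemma~\ref{lem.frostman}, which requires $\calH^h_\infty(S)>0$, not $\calH^h(S)>0$) and the equivalence $\calH^h(S)=0\iff\calH^h_\infty(S)=0$ for bounded sets (Lemma~\ref{lem.equiv}); it plays no role in realizing $\mu$ as a divergence, which needs only $1$-admissibility. Third, in (ii) the step from $\Capa_{p'}^{w^{p'-1}}(S)>0$ to a nonzero nonnegative measure on $S$ of finite $(p,1/w)$-energy is where the real work lies: the paper gets it by chaining the Sobolev, Bessel and \emph{truncated} Riesz capacities (Theorem~\ref{bessel and sobo cap}, Lemma~\ref{zero sets}) and then invoking the dual characterization of $L^q$-capacities (Remark~\ref{measure with finite energy}); calling this ``capacitary duality'' is fair but it is not a tautology, and only adjoint surjectivity is needed, not a bounded right inverse of the divergence.
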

\noindent In the previous statement, we mean by saying that a weight is \emph{$1$-admissible}, that it is doubling and satisfies a $(1,1)$-Poincar\'e inequality (see Definition~\ref{def.adm} below), while $A_{p'}$ stands for the Muckenhoupt class introduced (see Definition~\ref{def.muc}). Note that any $A_1$ weight is $1$-admissible (see Remark~\ref{rmq.adm}).  Note that $\calH^h$ is the classical (spherical) Hausdorff measure of dimension $n-1$ is case $w=1$.

A first remark about the previous theorem is that we recover, when $w=1$, the result mentioned above stating that $L^\infty$-removable (compact) sets are exactly those satisfying $\calH^{n-1}(S)=0$, and similarly in $L^p$.

An interesting case covered by our results is the one when the weight $w$ equals $+\infty$ on a ``large'' set (\emph{e.g.} on a set of positive Hausdorff dimension)~---~allowing the vector fields in $L^\infty_{1/w}$ to have singular pointwise behaviour on this ``large'' set. More precisely, as an interesting complement to [Theorem~\ref{thm.fond}, (i)], we provide examples of $A_1$ weights $w$ of the form $w(x):=\dist(x,C)^{-\alpha}$, with $\alpha>0$, whose singular set $C$ has positive Hausdorff dimension yet \emph{is} $L^\infty_{1/w}$-removable for the equation $\diver v=0$ for some values of $\alpha$ related to the Hausdorff dimension of $C$ and that of the ambient space. This is our Example~\ref{ex.cantor}.

The proof of both parts of Theorem~\ref{thm.fond} follow the same structure. Sufficient removability conditions are obtained by truncation arguments as in De Pauw \cite{DP} and \cite{M2006} (case $p=\infty$) and \cite{PT} (case $1<p<\infty$), but extra care is needed for we cannot, in this weighted context, rely on straightforward estimates relating the (weighted) perimeter of a ball $B$ of radius $r$ to the integral $\frac 1r \int_B w$. Showing those conditions are also necessary for a set $S$ to be removable is done by solving the equation $\diver v=\mu$ in $L^p_{1/w}(\R^n,\R^n)$ for some suitable measures $\mu$ supported in $S$, and showing it might admit non trivial solutions in case $S$ does \emph{not} satisfy the conditions in question. This is done, as in Bourgain and Brezis \cite{BBr} and \cite{PT}, by using a simple version of the closed range theorem. Note that the case where $p=+\infty$ cannot be dealt with using capacity arguments, see Remark \ref{remcapacity} below.

The paper is organized as follows. 
In Section \ref{sec.notation}, we give definitions and basic properties of $p$-admissible weights and introduce the notations used in the paper. 
In Section \ref{sec.linfini}, we study the removability question for the divergence equation for weighted $L^\infty$-vector fields. 
In the proofs, we need theory of functions of bounded variation in the weighted case, the weighted Hausdorff measure of co-dimension one, the boxing inequality and a version of Frostman's lemma. Those, as well as some technical lemmas are presented before the main results of the section.
Section \ref{sec.lp} contains characterization of removable sets for the divergence equation for weighted $L^p$-vector fields. In this section, important tools are weighted Sobolev spaces, different capacities and some tools from the general theory of $L^q$-capacities, discussed before the main results.

\section{Weights and notation}\label{sec.notation}

A locally integrable function $w\colon\rn\to\re$ is {\em a weight} if $w(x)>0$ for almost every $x\in\rn$. 
We say that the weight $w$ is \emph{doubling} if there exists a constant $C_D\geq 1$ (called the \emph{doubling constant of $w$}) such that for any $x\in\R^n$ and any $r>0$ one has:
$$
\int_{B(x,2r)} w\leq C_D \int_{B(x,r)} w,
$$
where $B(x,r)$ denotes the Euclidean (open) ball with center $x$ and radius $r$ in $\R^n$ and where one integrates with respect to Lebesgue measure. An iteration of the doubling inequality then ensures that one has, for all $t>0$:
\begin{equation}\label{eq.doubl}
\int_{B(x,tr)} w\leq C_D t^{s_D}\int_{B(x,r)} w,
\end{equation}
where $s_D:=\log_2 C_D$ is the \emph{doubling dimension of the weighted space $(\R^n,w)$}.

{\em For $1\leq p\leq\infty$, the weighted $L^p$-space}, $L^p_w(\R^n,\R^k)$, consists of measurable functions $u\colon\R^n\to\R^k$ for which $|u|^pw\in L^1(\R^n,\R^k)$, and we let:
$$
\|u\|_{p,w}:=\left(\int_{\R^n} |u|^pw\right)^{1/p},
$$
for $p<\infty$, and
$$
\|u\|_{\infty,w}:=\|uw\|_\infty.
$$
We finally let $L^p(\R^n):=L^p(\R^n,\R)$.\\

In the sequel we shall denote by $\Lip_c(\R^n)$ the set of all compactly supported (real valued) Lipschitz functions in $\R^n$. 
For a weight $w$, the weighted Euclidean space endowed with the Euclidean metric and the measure $d\mu=w\,dx$ is denoted by 
$(\R^n,w)$.

\begin{definition} Let $1\leq p<+\infty$ be a real number.
 We shall say that the weighted space $(\R^n,w)$ supports {\em a weighted $(1,p)$-Poincar\'e inequality} in case that there exist constants $C_P>0$ and $\tau>1$ such that for any $\varphi\in \Lip_{c}(\R^n)$, any $x\in\R^n$ and any $r>0$ we have:
$$
\vint{B(x,r)} |\varphi-\bar{\varphi}_{x,r}|w\leq C_P r\bigg(\,\,\vint{B(x,\tau r)} |\nabla \varphi|^pw\bigg)^{1/p},
$$
where we let $\bar{\varphi}_{x,r}:=\vint{B(x,r)} \varphi w$ and where $\vint{B}{\varphi}w$ denotes the mean value 
$\frac{1}{\int_B w} \int_B \varphi w$ for any Borel set $B$ satisfying $|B|>0$.
\end{definition}

We use the class of $p$-admissible weights as in \cite[Section~1.1]{HeKiMa} and \cite[Definition~A.6]{BB}.
Such weights are important in the nonlinear potential theory developed in \cite{HeKiMa}, see also \cite[Appendix A]{BB}.

\begin{definition}\label{def.adm}
Let $1\leq p<\infty$ be a real number.
A weight $w$ is said to be \emph{$p$-admissible} in case it is doubling and the weighted space $(\R^n,w)$ supports a $(1,p)$-Poincar\'e inequality.
\end{definition}

It follows from Heinonen, Kilpel\"ainen and Martio \cite[Corollary~20.9]{HeKiMa} (in the second edition of their book) that $p$-admissible weights for $p>1$ satisfy a bunch of other interesting properties, among which the following Poincar\'e inequality for compactly supported functions (see \cite[Section~1.4]{HeKiMa} and \cite[Corollary~2.1.5]{Tu}): there exists $\kappa>0$ such that for any ball $B=B(x,r)\subseteq\R^n$ and any $\varphi\in\Lip_c(\R^n)$ supported in $B(x,r)$, we get:
\begin{equation}\label{eq.poincare'}
\int_{B(x,r)} |\varphi|^p w\leq \kappa \,r^p\int_{B(x,r)} |\nabla\varphi|^pw.
\end{equation}

An important class of $p$-admissible weights are the $A_p$-weights, which were defined by Muckenhoupt in \cite{Mu}, where he showed that when $1<p<\infty$, the Hardy--Littlewood maximal operator is $L^p_w$-bounded if and only if $w\in A_p$. 

\begin{definition}\label{def.muc}
 A weight $w$ is {\em an $A_p$-weight}, $1<p<\infty$, if 
\[
\sup_B\Big(\,\vint{B}w\Big)\Big(\,\vint{B}w^{1/(1-p)}\Big)^{p-1}<\infty,
\]
where the supremum is taken over all balls $B\subset\rn$. Note that the $A_p$-condition implies that $w\in L^{1/(1-p)}(\rn)$ \emph{locally}.

A weight $w$ is {\em an $A_1$-weight}, if
\[
\sup_B\Big(\,\vint{B}w\Big)\supess_{B}\frac1{w}<\infty.
\]
\end{definition}
\begin{remark}\label{rmq.adm}
The fact that $A_p$-weights are $p$-admissible has been proved in \cite[Theorem~15.21]{HeKiMa} in case $p>1$ and in \cite[Theorem 4]{Bj} in case $p=1$. The doubling property follows easily from the $A_p$-condition but the validity of a weighted $(1,p)$-Poincar\'e inequality requires more work. 
\end{remark}

For further properties of $A_p$-weights, see for example \cite{CF}, \cite{J}, \cite[Chapter V]{St2} and \cite[Remark 1.2.4]{Tu} and for examples of $p$-admissible weights that are not $A_p$-weights, see for example \cite{CW} and \cite{FGW}.

\begin{example}\label{ex.a1}
It is classical that for $-n<\eta\leq 0$, $w_\eta(x):=|x|^\eta$ is an $A_1$-weight. Moreover any weight of the form $w_\eta(x)=|x|^\eta$ for $\eta>-n$ is doubling (while it may \emph{not} be $A_1$). See \emph{e.g.} \cite[Example~1.2.5]{Tu}.
\end{example}

For a set $A \subset\rn$, $\cM_+(A)$ is the set of locally finite (nonnegative) Radon measures supported in $A$.

\section{The case of weighted $L^\infty$ vector fields}\label{sec.linfini}
In this section, we study the removability question for the divergence equation for weighted $L^\infty$-vector fields. 
We start by defining some tools and proving results needed in the proofs - those include weighted Hausdorff content and measure of codimension $1$ and functions of bounded variation in the weighted setting. 
Our main results in this section hold for doubling weights that satisfy a $(1,1)$-Poincar\'e inequality. In Theorem \ref{thm.suff}, we show that any compact set $S\subseteq\R^n$ with $\calH^h(S)=0$, is $L^\infty_{1/w}$-removable for $\diver v=0$. Indeed, vanishing Hausdorff measure almost characterizes removable sets - if the weight $w$ satisfies an additional mild growth condition, then a compact set $S\subseteq\R^n$ is $L^\infty_{1/w}$-removable for $\diver v=0$ if and only if $\calH^h(S)=0$, see Theorem \ref{thm.iff infty}.

\subsection{Hausdorff contents}\label{subsection.haus}

Let $w$ be a weight.
Associated to $w$, define a (spherical) measure function $h$ on (closed) balls $B(x,r)$ by:
$$
h(B(x,r)):=\frac 1r \int_{B(x,r)} w.
$$

According to the usual Carath\'eodory construction (see \cite[Section~4.1]{MATTILA}), we also define a weighted co-dimension 1 (spherical) Hausdorff outer measure (as in Turesson \cite[Section~2.3]{Tu} and Nieminen \cite{Ni}) by letting first, for $0<\delta\leq\infty$ and $E\subseteq\R^n$:
$$
\calH^h_\delta(E):=\inf\sum_{j\in J} h(B(x_j,r_j)),
$$
where the infimum is taken on all countable coverings of $E$ by balls $(B(x_j,r_j))_{j\in J}$ satisfying $r_j\leq\delta$ for all $ j\in J$. We define then 
$$
\calH^h(E):=\lim_{\delta\to 0} \calH^h_\delta(E).
$$ 
It follows from \cite[Proposition~2.3.3]{Tu} that $\calH^h_\delta$ is an outer measure for any $0<\delta\leq\infty$, and that $\calH^h$ is a Borel regular outer measure.

The following straightforward lemma will be useful in the sequel.
\begin{lemma}\label{lem.leb-negl}
Let $h$ be associated to the weight $w$ as above and assume that the compact set $S\subseteq\R^n$ satisfies $\calH^h(S)=0$. 
Then $S$ is Lebesgue-negligible.
\end{lemma}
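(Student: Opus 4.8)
The plan is to bound the weighted integral $\int_S w$ directly by the weighted Hausdorff content and then to invoke the positivity of the weight; this lemma uses neither the doubling property nor the Poincar\'e inequality, only $w\in L^1_{loc}(\R^n)$ together with $w>0$ a.e. The key elementary observation is that the measure function $h$ carries a factor $1/r$. First I would fix $\delta>0$ and let $(B(x_j,r_j))_{j\in J}$ be any countable covering of $S$ by balls with $r_j\le\delta$. Since $S$ is compact (hence bounded) and $w\in L^1_{loc}(\R^n)$, every $\int_{B(x_j,r_j)}w$ is finite, and one has
\begin{equation*}
\int_S w\le\sum_{j\in J}\int_{B(x_j,r_j)}w=\sum_{j\in J} r_j\,h(B(x_j,r_j))\le\delta\sum_{j\in J}h(B(x_j,r_j)),
\end{equation*}
where the first inequality follows from $S\subseteq\bigcup_{j\in J}B(x_j,r_j)$ together with $w\ge0$, and the middle identity is simply the definition $h(B(x_j,r_j))=\frac1{r_j}\int_{B(x_j,r_j)}w$.

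Next I would take the infimum over all such coverings, which yields $\int_S w\le\delta\,\calH^h_\delta(S)$ for every $\delta>0$. Because $\delta\mapsto\calH^h_\delta(S)$ is non-increasing and $\calH^h(S)=\lim_{\delta\to0}\calH^h_\delta(S)=\sup_{\delta>0}\calH^h_\delta(S)$, the hypothesis $\calH^h(S)=0$ forces $\calH^h_\delta(S)=0$ for every $\delta>0$. Plugging this back into the displayed bound gives $\int_S w\le0$, hence $\int_S w=0$.

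Finally I would conclude using that $w$ is a weight, so $w>0$ almost everywhere: writing $\int_S w=\int_{S\cap\{w>0\}}w$, the vanishing of this integral of a strictly positive integrand forces $S\cap\{w>0\}$ to be Lebesgue-negligible, while $\{w=0\}$ is negligible by definition of a weight; therefore $|S|=0$. I do not expect a genuine obstacle here—the statement is deliberately ``straightforward''. The only points demanding a word of care are the finiteness of the $\int_{B(x_j,r_j)}w$ (supplied by $w\in L^1_{loc}$ and the boundedness of the covering balls, which may be taken to meet $S$) and the harmless remark that replacing the closed balls of the Carath\'eodory construction by open ones alters none of these integrals, since the spheres $\partial B(x_j,r_j)$ are Lebesgue-null.
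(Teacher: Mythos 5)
Your proposal is correct and follows essentially the same route as the paper: both bound $\int_S w$ by $\sum_j r_j\,h(B(x_j,r_j))$ for a covering with small radii and small $h$-sum, then use $w>0$ a.e. to conclude $|S|=0$. The only cosmetic difference is that the paper fixes $r_j\le 1$ and lets the sum be $\le\eta$, whereas you keep a general $\delta$ and invoke monotonicity of $\delta\mapsto\calH^h_\delta(S)$; both are valid.
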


\begin{proof}
If $\eta>0$ is an arbitrary positive number, let $(B(x_j,r_j))_{j\in J}$ be a finite family of balls covering $S$, verifying $r_j\leq 1$ for each $j\in J$ as well as:
$$
\sum_{j\in J} h(B(x_j,r_j))\leq \eta.
$$
We then compute:
$$
\int_S w\leq\int_{\bigcup_{j\in J} B(x_j,r_j)} w\leq\sum_{j\in J}\int_{B(x_j,r_j)} w
= \sum_{j\in J} r_j h(B(x_j,r_j))\leq\sum_{j\in J}h(B(x_j,r_j))\leq\eta,
$$
so that one has $\int_S w=0$, since $\eta>0$ is arbitrary. 
It follows that the set $\{x\in S: w(x)>0\}$ is Lebesgue-negligible, and hence that $S$ itself is Lebesgue-negligible for we assumed than one has $w>0$ a.e. on $\R^n$.
\end{proof}

When $\R^n$ is endowed with a doubling weight which grows fast enough, we have the following version of Frostman's lemma, which is a particular case of \cite[Theorem~3.4.27]{Tu}.
\begin{lemma}\label{lem.frostman}
Assume that $w$ is a doubling weight on $\R^n$. If moreover, for any $x\in\R^n$:
\begin{equation}\label{eq.croiss}
\lim_{r\to\infty} h(B(x,r))=\infty,
\end{equation}
and if $B\subseteq\R^n$ is a Borel set verifying $\calH^h_\infty(B)>0$, then there exists a nontrivial measure $\mu\in\cM_+(B)$ satisfying the following inequality:
$$
\mu(B(x,r))\leq Ch(B(x,r)),
$$
for any $x\in\R^n$ and $r>0$.
\end{lemma}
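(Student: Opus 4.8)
The statement is a Frostman-type lemma for the gauge $h$, and I would establish it by the classical dyadic mass-distribution scheme, adapted to $h$. The plan is first to reduce to the case of a compact set. Since $\calH^h_\infty$ is a monotone, countably subadditive outer measure obtained by the Carath\'eodory construction from the continuous set function $h$, it is a Choquet capacity; hence every Borel (indeed analytic) set is capacitable, and one may choose a compact $K\subseteq B$ with $\calH^h_\infty(K)>0$. It then suffices to produce a nontrivial $\mu\in\cM_+(K)$ satisfying $\mu(B(x,r))\le C\,h(B(x,r))$ for all $x,r$. To run a dyadic argument I would replace balls by dyadic cubes: to a dyadic cube $Q$ of side $\ell(Q)$ I associate the ball $B_Q$ centered at the center of $Q$ and of radius $\diam Q$ (so that $Q\subseteq B_Q$), and set $\tilde h(Q):=h(B_Q)$. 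The doubling property of $w$, in the iterated form (\ref{eq.doubl}), guarantees that $\tilde h(Q)$ and $h(B)$ are comparable whenever $Q$ and $B$ have comparable scales and are close, which is precisely what will let me pass between cube bounds and ball bounds at the end.

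Next I would carry out the standard top-down capping construction. Fix a fine generation $N$ and, on each dyadic cube $Q$ of generation $N$ meeting $K$, place mass $\tilde h(Q)$ (spread, say, uniformly on $Q$); then move through successively coarser generations, multiplying the measure inside each cube $Q$ by $\min\{1,\tilde h(Q)/\mu^N(Q)\}$, i.e.\ capping the mass of $Q$ at $\tilde h(Q)$. After finitely many steps this produces a measure $\mu^N$ with $\mu^N(Q)\le\tilde h(Q)$ for every dyadic cube $Q$, supported within a neighbourhood of $K$ of size $O(2^{-N})\to 0$. The crucial point is the lower bound $\mu^N(\R^n)\ge c\,\calH^h_\infty(K)$: every point of $K$ lies in a capped cube (the finest one is capped by construction), so the maximal capped cubes $\{Q_i\}$ are pairwise disjoint, cover $K$, and satisfy $\mu^N(Q_i)=\tilde h(Q_i)$; since $\{B_{Q_i}\}$ then covers $K$,
$$
\mu^N(\R^n)\ge\sum_i\mu^N(Q_i)=\sum_i h(B_{Q_i})\ge\calH^h_\infty(K).
$$
This is where the growth hypothesis (\ref{eq.croiss}) enters: it makes coverings by balls of large radius expensive, so that $\calH^h_\infty(K)$ is realized by covers of bounded radius and the coarsest relevant generation can be taken finite, which guarantees that the maximal capped cubes exist, have controlled size, and that the total masses $\mu^N(\R^n)$ stay uniformly bounded.

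Finally I would extract a weak-$*$ limit $\mu$ of a subsequence of $(\mu^N)$. Because the supports shrink to the compact set $K$, the limit $\mu$ is supported in $K\subseteq B$, so $\mu\in\cM_+(B)$; lower semicontinuity of mass on open sets together with the uniform lower bound above yields $\mu(\R^n)\ge c\,\calH^h_\infty(K)>0$, so $\mu$ is nontrivial. For the growth estimate, given any ball $B(x,r)$ I would cover it by a bounded number (depending only on $n$) of dyadic cubes $Q$ of side comparable to $r$; passing the inequalities $\mu^N(Q)\le\tilde h(Q)$ to the limit and summing gives $\mu(B(x,r))\le\sum_Q\tilde h(Q)=\sum_Q h(B_Q)\le C\,h(B(x,r))$, the last step again using (\ref{eq.doubl}) to compare the comparable balls $B_Q$ and $B(x,r)$. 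The main obstacle I expect is the lower mass bound $\mu^N(\R^n)\ge c\,\calH^h_\infty(K)$ together with the correct use of (\ref{eq.croiss}) to keep the construction at finite scales; the ball/cube conversions are routine consequences of doubling. Alternatively, one may simply verify the hypotheses of \cite[Theorem~3.4.27]{Tu} for the gauge $h$ and invoke it directly.
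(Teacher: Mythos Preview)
The paper does not give a proof of this lemma at all: it simply records that the statement is a particular case of \cite[Theorem~3.4.27]{Tu} and moves on. Your proposal therefore goes well beyond the paper's treatment, supplying a self-contained argument via the classical dyadic mass-distribution (Frostman) construction, while also noting at the end the very citation the paper relies on. The dyadic scheme you describe (reduction to compact $K$, top-down capping, lower bound via maximal capped cubes, weak-$*$ limit, and cube-to-ball transfer through doubling) is the standard one and is correct.

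One small sharpening concerning the role of (\ref{eq.croiss}): once you fix a cube $Q_0\supseteq K$ and cap inside it, the existence of maximal capped cubes and the uniform bound $\mu^N(\R^n)\le\tilde h(Q_0)$ are automatic, so the growth hypothesis is not really needed there. Where it is genuinely used is in the final ball estimate for radii $r$ much larger than $\diam Q_0$: for such balls one only has $\mu(B(x,r))\le\mu(\R^n)\le\tilde h(Q_0)$, and (\ref{eq.croiss}) is precisely what forces $h(B(x,r))$ to eventually dominate this fixed finite quantity, yielding $\mu(B(x,r))\le C\,h(B(x,r))$ for all $r$. With that adjustment your outline is sound.
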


\begin{example}
An easy computation shows that the weight $w_\eta$ defined in Example~\ref{ex.a1} also satisfies condition (\ref{eq.croiss}) in case $1-n<\eta\leq 0$.
\end{example}

In fact, assuming the growth condition (\ref{eq.croiss}) only for $x=0$, it is equivalent for a bounded set $B$ to satisfy $\calH^h_\infty(B)=0$ or $\calH^h(B)=0$.
\begin{lemma}\label{lem.equiv}
Assume that $w$ is doubling and that one has:
\begin{equation}\label{eq.croiss2}
\lim_{r\to\infty} h(B(0,r))=\infty.
\end{equation}
For any bounded set $B\subseteq \R^n$, the equalities $\calH^h(B)=0$ and $\calH^h_\infty(B)=0$ are equivalent.
\end{lemma}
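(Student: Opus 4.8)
The plan is to prove the nontrivial implication, since one inequality is immediate: because the family of admissible coverings only shrinks as $\delta$ decreases, the content $\calH^h_\delta(B)$ is non-increasing in $\delta$, whence $\calH^h_\infty(B)\le\calH^h_\delta(B)\le\calH^h(B)$ for every $\delta>0$; in particular $\calH^h(B)=0$ forces $\calH^h_\infty(B)=0$. It remains to show that $\calH^h_\infty(B)=0$ implies $\calH^h(B)=0$, that is, $\calH^h_\delta(B)=0$ for each fixed $\delta>0$. I would fix $\delta>0$ and $\epsilon>0$, choose a countable cover $(B(x_j,r_j))_j$ of $B$ with $\sum_j h(B(x_j,r_j))<\epsilon$, and discard the balls disjoint from $B$ so that each remaining one meets $B$. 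The difficulty is that some radii $r_j$ may exceed $\delta$, making the cover inadmissible for $\calH^h_\delta$, so I must replace the oversized balls by balls of radius $\le\delta$ without inflating the total $h$-mass beyond a controlled multiple of $\epsilon$.

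The key steps are two. First, an a priori radius bound coming from the growth condition: writing $B\subseteq B(0,R_0)$, a covering ball meeting $B$ has $|x_j|<R_0+r_j$, so if $r_j\ge R_0$ then $B(0,r_j)\subseteq B(x_j,4r_j)$, and two applications of the doubling inequality give $h(B(0,r_j))\le C_D^2\,h(B(x_j,r_j))$. Fixing once and for all a reference level $\epsilon_0$, condition (\ref{eq.croiss2}) lets me pick $R_1$ with $h(B(0,r))>C_D^2\epsilon_0$ for $r\ge R_1$; then any ball with $r_j\ge R^*:=\max(R_0,R_1)$ would satisfy $h(B(x_j,r_j))>\epsilon_0\ge\epsilon$, contradicting $\sum_j h<\epsilon$. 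Hence, for every $\epsilon\le\epsilon_0$, all covering balls have $r_j<R^*$, with $R^*$ independent of $\epsilon$. Second, a subdivision step: each ball with $\delta<r_j<R^*$ is covered, by a maximal $\delta$-separated set of centers in $B(x_j,r_j)$, by balls $B(y_i,\delta)$ with bounded overlap $C_n$; since $\delta<r_j$ these sit inside $B(x_j,2r_j)$, so doubling yields $\sum_i h(B(y_i,\delta))\le\frac{C_nC_D}{\delta}\int_{B(x_j,r_j)}w=\frac{C_nC_D r_j}{\delta}h(B(x_j,r_j))\le\frac{C_nC_DR^*}{\delta}h(B(x_j,r_j))$.

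Summing the retained small balls ($r_j\le\delta$) and the subdivided ones gives $\calH^h_\delta(B)\le\bigl(1+\frac{C_nC_DR^*}{\delta}\bigr)\epsilon$, and letting $\epsilon\to0$ yields $\calH^h_\delta(B)=0$; since $\delta$ was arbitrary, $\calH^h(B)=0$. The main obstacle is exactly the a priori radius bound: the subdivision unavoidably loses a factor proportional to $r_j/\delta$, which would be fatal if the radii could grow as $\epsilon\to0$. The crux is that (\ref{eq.croiss2}), together with doubling and the boundedness of $B$, pins the radii below a constant $R^*$ that does \emph{not} depend on $\epsilon$, so the lost factor $R^*/\delta$ stays fixed and the estimate still tends to $0$. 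I would be careful to extract $R^*$ from the fixed reference level $\epsilon_0$ — using that the threshold radius is monotone in $\epsilon$ — rather than from $\epsilon$ itself, so as to avoid any circular dependence.
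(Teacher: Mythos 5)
Your proof is correct, and it reaches the conclusion by a genuinely different route in the nontrivial direction. Both arguments share the first key step: using doubling together with (\ref{eq.croiss2}) and the fact that each covering ball meets $B\subseteq B(0,R_0)$, one bounds all radii of an $\epsilon$-efficient cover by a constant independent of $\epsilon$. The paper then refines this same comparison rather than subdividing: taking $\epsilon$ smaller than a fixed ($\delta$-dependent) multiple of $\int_{B(0,\delta)}w>0$, any covering ball with $r_j>\delta$ would give $\int_{B(0,\delta)}w\le\int_{B(0,r_j)}w\le C\epsilon<\int_{B(0,\delta)}w$, a contradiction; hence the given cover is already $\delta$-admissible and $\calH^h_\delta(B)\le\epsilon$ directly. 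You instead keep only the uniform radius bound $R^*$ and subdivide each oversized ball into balls of radius $\delta$ via a maximal $\delta$-separated net with bounded overlap, losing a factor $C_nC_DR^*/\delta$ that is harmless since it does not depend on $\epsilon$. Your scheme is the standard ``a priori radius bound plus subdivision'' argument and requires the Euclidean bounded-overlap constant $C_n$ (so it is slightly less intrinsic to the weighted/metric setting), while the paper's version avoids any covering lemma, using only doubling and the positivity of $\int_{B(0,\delta)}w$, at the cost of letting the admissible range of $\epsilon$ depend on $\delta$. Both correctly yield $\calH^h_\delta(B)=0$ for every $\delta>0$ and hence $\calH^h(B)=0$.
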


\begin{proof}
Fix a bounded set $B\subseteq\R^n$. Since one has $\calH^h_\infty(B)\leq\calH^h(B)$, it is clear that $\calH^h_\infty(B)=0$ provided that $\calH^h(B)=0$. 

Conversely, assume that $\calH^h_\infty(B)=0$. Choose $R_0>0$ such that one has $B\subset B(0,R_0)$. Let $f(r):=h(B(0,r))$ for all $r>0$. Since (\ref{eq.croiss2}) yields $$\lim_{r\rightarrow +\infty} f(r)=+\infty,$$ we are allowed to choose $R>R_0$ such that $f(r)>3^{s_D}C_D$ for all $r>R$, where $C_D>0$ and $s_D:=\log_2 C_D$ are the doubling constants of $w$ (see (\ref{eq.doubl}) above).

Fix now $\delta>0$ and let:
$$
c_\delta:=\max\Big(3, 2+\frac{R_0}{\delta}\Big).
$$
Choose then $\varepsilon>0$ with:
$$
\varepsilon<\min\big(1,\frac 1{c_\delta^{s_D}C_DR}\int_{B(0,\delta)}w\bigg).
$$
Since one has $\calH^h_\infty(B)=0$, there exists an (at most countable) family of balls $(B(x_j,r_j))_{j\in J}$ covering $B$ and satisfying:
$$
\sum_{j\in J} h(B(x_j,r_j))=\sum_{j\in J} \frac 1{r_j}\int_{B(x_j,r_j)} w\leq \varepsilon.
$$
One may assume that, for all $j\in J$, one has $B(x_j,r_j)\cap B\neq \emptyset$, so that one computes $\left\vert x_j\right\vert\leq r_j+R_0$. By the doubling property (\ref{eq.doubl}), we get for any $j\in J$ such that $r_j>R_0$:
\begin{multline*}
\int_{B(0,r_j)}w\leq \int_{B(x_j,r_j+\left\vert x_j\right\vert)} w\leq C_D\left(1+\frac{\left\vert x_j\right\vert}{r_j}\right)^{s_D}\int_{B(x_j,r_j)}w\\\leq C_D\left(2+\frac{R_0}{r_j}\right)^{s_D}\int_{B(x_j,r_j)}w\leq 3^{s_D}C_D \int_{B(x_j,r_j)}w,
\end{multline*}
so that:
$$
\sum_{j\in J} f(r_j)\leq 3^{s_D}C_D\varepsilon < 3^{s_D}C_D.
$$
We hence get $r_j\leq R$ for all $j\in J$, for the latter inequality is obvious in case $j\in J$ is such that one has $r_j\leq R_0<R$.

Assume now that $j\in J$ is such that $r_j>\delta$. If moreover one has $r_j> R_0$, the computations above show that:
$$
\int_{B(0,r_j)}w\leq 3^{s_D} C_D r_j\cdot\frac{1}{r_j} \int_{B(x_j,r_j)} w\leq 3^{s_D}C_DR\varepsilon\leq c_\delta^{s_D} C_D R\epsilon.
$$
In case one has $\delta<r_j\leq R_0$, we compute using again (\ref{eq.doubl}):
$$
\int_{B(0,r_j)}w\leq C_D\left(2+\frac{R_0}{r_j}\right)^{s_D}  \int_{B(x_j,r_j)} w\leq \left(2+\frac{R_0}{\delta}\right)^{s_D}C_Dr_j\varepsilon\leq c_\delta^{s_D} C_D R\epsilon.
$$
Hence in both cases we have $\int_{B(0,r_j)} w\leq c^{s_D}_\delta C_DR\epsilon$. We hence get:
$$
\int_{B(0,\delta)} w\leq \int_{B(0,r_j)} w\leq c_\delta^{s_D} C_DR\epsilon<\int_{B(0,\delta)}w,
$$
which is impossible.

Therefore $r_j\leq \delta$ for each $j\in J$, so that we get:
$$
\calH^h_\delta(B)\leq\sum_{j\in J} h(B(x_j,r_j))\leq\epsilon.
$$
Since $\epsilon$ is arbitrary small, this yields $\calH^h_\delta(B)=0$. Finally, we get $\calH^h(B)=0$ for the previous estimates yield $\calH^h_\delta(B)=0$ for any $\delta>0$.
\end{proof}

\begin{remark}
In case $w$ is $1$-admissible, the previous lemma can be obtained by combing results \cite[Lemma~7.6, Remark~7.4]{KINNUNEN-ET-AL} by Kinnunen, Korte, Shanmugalingam and the third author.
\end{remark}

\subsection{Miranda's $BV$-functions} 
In the more general context of metric measure spaces, M.~Miranda introduced in \cite{MIRANDA} the notion of \emph{function with bounded variation}. We shall in the sequel particularize some results obtained by Miranda to the context where $\R^n$ is endowed by an appropriate weight; to this purpose, we need to introduce some terminology.

Following Miranda \cite{MIRANDA}, read in this weighted context by Camfield \cite{CAMFIELDT}, define the \emph{metric (weighted) variation of $u\in L^1_{w}(\R^n)$} by:
$$
\|Du\|_w:=\inf\left\{\liminfe_{k\to\infty} \int_{\R^n} |\nabla \varphi_k|w: (\varphi_k)\subseteq\Lip(\R^n), \varphi_k\to u\text{ in }L^1_{w}\right\}.
$$

The following Theorem, stated here for reader's convenience but unnecessary for our purposes, is a direct consequence of two deep results by Camfield \cite[Theorems~3.2.6 and 3.4.5]{CAMFIELDT}. It shows that, under some regularity conditions on the weight $w$, the metric variation of a Lipschitz function is identical to the $L^1_w$ norm of its gradient.
\begin{theorem}[Camfield]\label{thm.camfield}
Assume that $w$ is locally integrable and lower semicontinuous. If moreover there exists an $\calH^h$-negligible set $E$ outside which $w$ is continuous and (strictly) positive, then for any $\varphi\in\Lip_c(\R^n)$ we have:
\begin{equation}\label{eq.egal}
\|D\varphi\|_w=\|\nabla\varphi\|_{1,w}=\int_{\R^n} |\nabla\varphi|w.
\end{equation}
\end{theorem}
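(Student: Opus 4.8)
The equality $\|\nabla\varphi\|_{1,w}=\int_{\Rn}|\nabla\varphi|\,w$ is merely the definition of the $L^1_w$-norm of the vector field $\nabla\varphi$, so the whole content lies in the identity $\|D\varphi\|_w=\int_{\Rn}|\nabla\varphi|\,w$, which I would establish through two inequalities. The bound $\|D\varphi\|_w\le\int_{\Rn}|\nabla\varphi|\,w$ is immediate: since $\varphi\in\Lip_c(\Rn)\subseteq\Lip(\Rn)$, the constant sequence $\varphi_k:=\varphi$ is admissible in the infimum defining $\|D\varphi\|_w$ and yields exactly $\int_{\Rn}|\nabla\varphi|\,w$. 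All the work is therefore in the reverse inequality, i.e. in a \emph{weighted lower semicontinuity} statement: for every sequence $(\varphi_k)\subseteq\Lip(\Rn)$ with $\varphi_k\to\varphi$ in $L^1_w$ one must show
\[
\liminf_{k\to\infty}\int_{\Rn}|\nabla\varphi_k|\,w\ge\int_{\Rn}|\nabla\varphi|\,w.
\]
I may assume the left-hand side is finite and, passing to a subsequence, that $\int_{\Rn}|\nabla\varphi_k|\,w$ converges to it.

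The plan is to slice the weight by its super-level sets. Because $w$ is lower semicontinuous, the set $G_t:=\{w>t\}$ is \emph{open} for every $t>0$, and the layer-cake representation (writing $w(x)=\int_0^\infty\mathbf 1_{G_t}(x)\,dt$ and using Tonelli) gives, for any Lipschitz $\psi$,
\[
\int_{\Rn}|\nabla\psi|\,w=\int_0^\infty\Big(\int_{G_t}|\nabla\psi|\,dx\Big)\,dt.
\]
The decisive gain is that on $G_t$ the weight is bounded below by $t$, so from $\|\varphi_k-\varphi\|_{1,w}\to0$ I read off $\int_K|\varphi_k-\varphi|\le t^{-1}\|\varphi_k-\varphi\|_{1,w}\to0$ for every compact $K\subset G_t$; that is, $\varphi_k\to\varphi$ in $L^1_{\loc}(G_t)$. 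This localizes the convergence precisely to the region where $w$ is non-degenerate, sidestepping the fact that $L^1_w$-convergence controls nothing where $w$ is small.

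On each open set $G_t$ I would then invoke the classical lower semicontinuity of the total variation: using the duality representation $\int_{G_t}|\nabla\varphi|\,dx=\sup\{-\int_{G_t}\nabla\varphi\cdot\Psi:\Psi\in C^1_c(G_t,\Rn),\ |\Psi|\le1\}$ together with the integration by parts $-\int_{G_t}\nabla\varphi_k\cdot\Psi=\int_{G_t}\varphi_k\,\diver\Psi$ (legitimate since $\varphi_k$ is Lipschitz and $\Psi$ is compactly supported in $G_t$), the $L^1_{\loc}(G_t)$-convergence yields $\int_{G_t}|\nabla\varphi|\,dx\le\liminf_k\int_{G_t}|\nabla\varphi_k|\,dx$ for each fixed $t$. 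Feeding this into the sliced identity and applying Fatou's lemma in the variable $t$ to the nonnegative functions $t\mapsto\int_{G_t}|\nabla\varphi_k|\,dx$ gives
\[
\liminf_{k\to\infty}\int_{\Rn}|\nabla\varphi_k|\,w\ge\int_0^\infty\Big(\int_{G_t}|\nabla\varphi|\,dx\Big)\,dt=\int_{\Rn}|\nabla\varphi|\,w,
\]
which is exactly the missing inequality.

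The step I expect to be most delicate is the coupling of the two limits — in $k$ and in $t$ — and the check that the localized $L^1$-convergence on $G_t$ really suffices to run the duality argument for each fixed $t$; this is where lower semicontinuity of $w$, which makes the super-level sets open, is essential. It is worth remarking that the stated continuity and strict positivity of $w$ outside an $\calH^h$-negligible set $E$, which enter Camfield's general $\BV$ theory, are not genuinely needed for this Lipschitz equality: the argument above relies only on the lower semicontinuity and local integrability of $w$.
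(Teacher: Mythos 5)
Your proof is correct, and it is a genuinely different route from the paper's: the paper gives no argument at all for this statement, deriving it as a direct consequence of two results in Camfield's thesis (Theorems~3.2.6 and 3.4.5 there) and explicitly flagging it as unnecessary for its purposes. Your argument is self-contained and more elementary. The upper bound via the constant sequence is the right observation, and the lower bound is exactly a weighted lower-semicontinuity statement; your slicing $w(x)=\int_0^\infty \chi_{\{w>t\}}(x)\,dt$ exploits lower semicontinuity precisely where it matters (openness of $G_t=\{w>t\}$, so that the classical duality formula for the total variation applies there), the bound $w>t$ on $G_t$ correctly upgrades $L^1_w$-convergence to $L^1(G_t)$-convergence, and the final interchange via Fatou in $t$ is legitimate since the slice functions $t\mapsto\int_{G_t}|\nabla\varphi_k|$ are nonnegative and jointly measurable by Tonelli. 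One point worth making explicit: your argument proves the identity for the variation as the paper \emph{defines} it, namely with $|\nabla\varphi_k|$ (the a.e.\ gradient) in the infimum; Miranda's and Camfield's metric-space framework uses pointwise Lipschitz constants or upper gradients instead, and it is for reconciling those notions with $|\nabla u|$ (and for general $BV_w$ functions rather than compactly supported Lipschitz ones) that the hypotheses on the exceptional set $E$ enter Camfield's theorems. So your closing remark is right for the statement as printed, but the extra hypotheses are not vacuous in the source results being quoted. What your approach buys is a short, transparent proof under weaker assumptions; what the citation buys the authors is compatibility with the metric-measure-space $BV$ theory they use elsewhere in the section.
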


For $1$-admissible weights, the equality (\ref{eq.egal}) can be replaced by a comparison between the two quantities involved, at least for Lipschitz functions (see \cite[p.~992, (19) and below]{MIRANDA}).
\begin{proposition}\label{prop.ineg-normes}
Assume that the weight $w$ is $1$-admissible. 
Then for any $\varphi\in\Lip_c(\R^n)$ we have:
$$
c\|\nabla\varphi\|_{1,w}\leq \|D\varphi\|_w\leq \|\nabla \varphi\|_{1,w},
$$
where $c>0$ is independent of $\varphi$.
\end{proposition}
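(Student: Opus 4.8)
The plan is to prove the two inequalities separately, the right-hand one being essentially a formality and the left-hand one carrying all the content. For the upper bound $\|D\varphi\|_w\le\|\nabla\varphi\|_{1,w}$ I would simply feed the constant sequence $\varphi_k:=\varphi$ into the defining infimum: since $\varphi\in\Lip_c(\R^n)\subseteq\Lip(\R^n)$ and trivially $\varphi_k\to\varphi$ in $L^1_w$, the infimum defining $\|D\varphi\|_w$ is bounded above by $\liminfe_k\int_{\R^n}|\nabla\varphi_k|w=\int_{\R^n}|\nabla\varphi|w$.

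For the lower bound, fix an arbitrary sequence $(\varphi_k)\subseteq\Lip(\R^n)$ with $\varphi_k\to\varphi$ in $L^1_w$; I must produce a constant $c>0$, depending only on $n$ and the Poincar\'e data, with $\liminfe_k\int|\nabla\varphi_k|w\ge c\int|\nabla\varphi|w$. Passing to a subsequence I may assume the $\liminf$ is a finite limit $M$ (otherwise there is nothing to prove). The measures $\mu_k:=|\nabla\varphi_k|w\,dx$ then have uniformly bounded mass, so a further subsequence converges weakly-$\ast$ to a finite Radon measure $\nu$ with $\nu(\R^n)\le M$. Applying the $(1,1)$-Poincar\'e inequality to each $\varphi_k$ on an arbitrary ball (the inequality, stated for $\Lip_c$, extends in the standard way to all locally Lipschitz functions), rewriting its right-hand side as $C_P\,r\,\mu_k(B(x,\tau r))/\mu(B(x,\tau r))$ with $\mu:=w\,dx$, and letting $k\to\infty$ — the left-hand side converges since $\varphi_k\to\varphi$ in $L^1_w$, while $\limsup_k\mu_k(\overline{B(x,\tau r)})\le\nu(\overline{B(x,\tau r)})$ by weak-$\ast$ upper semicontinuity on compact sets — yields
\[
\vint{B(x,r)}|\varphi-\bar{\varphi}_{x,r}|\,w\le C_P\,r\,\frac{\nu(\overline{B(x,\tau r)})}{\mu(B(x,\tau r))}
\]
for every ball $B(x,r)$.

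The heart of the argument is to convert this oscillation estimate into a pointwise lower bound on the gradient. Here I would use that $\varphi$ is genuinely Lipschitz, hence differentiable a.e.\ by Rademacher's theorem, and that $\mu$-a.e.\ point $x$ is simultaneously a point of differentiability of $\varphi$, a Lebesgue point of $w$, and a point where the Radon--Nikodym differentiation $\nu(B(x,\rho))/\mu(B(x,\rho))\to f(x):=\frac{d\nu}{d\mu}(x)$ holds (valid because $\mu$ is doubling). At such an $x$, a first-order Taylor expansion of $\varphi$ together with the approximate constancy of $w$ gives
\[
\vint{B(x,r)}|\varphi-\bar{\varphi}_{x,r}|\,w=c_n\,r\,|\nabla\varphi(x)|+o(r),\qquad c_n:=\vint{B(0,1)}|e\cdot z|\,dz>0,
\]
with $c_n$ independent of the unit vector $e$ by symmetry. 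Dividing the displayed Poincar\'e estimate by $r$ and letting $r\to0$ then produces $c_n|\nabla\varphi(x)|\le C_P f(x)$ for $\mu$-a.e.\ $x$, and integrating against $\mu$ gives
\[
\int_{\R^n}|\nabla\varphi|\,w\le\frac{C_P}{c_n}\int_{\R^n}f\,d\mu\le\frac{C_P}{c_n}\,\nu(\R^n)\le\frac{C_P}{c_n}\,M,
\]
which is the desired inequality with $c=c_n/C_P$.

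The main obstacle is precisely this last conversion step: one must guarantee that the averaged oscillation on the left genuinely detects the full pointwise gradient, which forces one to combine Rademacher differentiability, Lebesgue points of the weight, and the Besicovitch--Lebesgue differentiation theorem for the doubling measure $\mu$ at one and the same generic point $x$, and to check that the Taylor remainder and the weight fluctuation are both negligible there. It is also exactly here that the dimensional constant $c_n$ and the Poincar\'e constant $C_P$ enter, explaining why one only obtains a two-sided comparison with $c<1$ in general rather than the equality of Theorem~\ref{thm.camfield}; recovering that equality would require the finer continuity and positivity of $w$ assumed by Camfield, which mere $1$-admissibility does not supply.
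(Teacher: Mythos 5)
Your proof is correct. Note first that the paper itself gives no proof of this proposition: it is quoted from Miranda \cite[p.~992, (19) and below]{MIRANDA}, specialized to the weighted Euclidean setting. Your argument is therefore a self-contained substitute, and it runs along the same conceptual lines as the cited source: the upper bound is the trivial constant-sequence test, and the lower bound comes from the $(1,1)$-Poincar\'e inequality combined with weak-$*$ compactness of the gradient measures $|\nabla\varphi_k|w\,dx$ and a blow-up at $\mu$-generic points. The three fine points you flag are exactly the ones that need care, and they all go through: (a) the Poincar\'e inequality extends from $\Lip_c$ to $\Lip$ on any fixed ball by multiplying by a cutoff equal to $1$ on a slightly larger ball, since both sides only see the function there; (b) at a point which is simultaneously a Rademacher point of $\varphi$, a Lebesgue point of $w$ with $0<w(x)<\infty$, and a differentiation point of $\nu$ with respect to $\mu=w\,dx$ (all of which hold $\mu$-a.e., since $\mu\ll\mathcal{L}^n$ and differentiation of Radon measures in $\R^n$ is available via Besicovitch, so the doubling hypothesis is not even needed for that step), the weighted oscillation is indeed $c_n r|\nabla\varphi(x)|+o(r)$ because the factor $|B(x,r)|/\mu(B(x,r))\to 1/w(x)$ makes the weight fluctuation term $o(r)$; and (c) passing to the limit in $k$ uses $L^1_w$-convergence on the left and upper semicontinuity of weak-$*$ limits on closed balls on the right, with the open-versus-closed ball discrepancy harmless since it only enlarges the right-hand side. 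One could quibble that a cleaner bookkeeping is to prove only the lower bound $\vint{B(x,r)}|\varphi-\bar{\varphi}_{x,r}|w\geq c_n r|\nabla\varphi(x)|-o(r)$, which is all the argument uses, but as written the proof is sound and yields the explicit constant $c=c_n/C_P$.
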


The following proposition is a particular case of M.~Miranda's Coarea formula \cite[Proposition~4.2]{MIRANDA}. The \emph{(weighted) perimeter} of a Borel set $B$, denoted by $P_w(B)$, is defined by:
$$
P_w(B):=\|D\chi_B\|_w.
$$
\begin{proposition}\label{prop.coaire}
Assume that $w$ is a $1$-admissible weight.
Then, for any $u\in L^1_{w}(\R^n)$ verifying $\|Du\|_w<+\infty$, we have:
$$
\|Du\|_w=\int_\R P_w(\{u>t\})\,dt.
$$
\end{proposition}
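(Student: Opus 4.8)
The plan is to recognize the statement as a direct specialization of M.~Miranda's coarea formula, and, for completeness, to indicate the structural mechanism behind it. Since $w$ is $1$-admissible, the metric measure space $(\R^n,|\cdot|,\mu)$ with $d\mu=w\,dx$ is doubling and supports a weak $(1,1)$-Poincar\'e inequality, \emph{i.e.}\ it is one of the spaces for which \cite[Proposition~4.2]{MIRANDA} is established. Moreover the total variation $\|Du\|_w$ introduced above is exactly Miranda's relaxed functional, and the weighted perimeter $P_w(B)=\|D\chi_B\|_w$ is his perimeter. Thus the shortest route is to verify that our definitions coincide with Miranda's and invoke his theorem verbatim. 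I now sketch the argument one would run from scratch, which also isolates the point where $1$-admissibility is genuinely used.

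The backbone is three elementary properties of the functional $V(u):=\|Du\|_w$, all immediate from its definition as an infimum of liminfs along $L^1_w$-approximating Lipschitz sequences: $V$ is \emph{lower semicontinuous} for $L^1_w$-convergence (a diagonal argument: given $u_j\to u$ in $L^1_w$, combine near-optimal Lipschitz sequences for the $u_j$ into one for $u$); $V$ is \emph{subadditive}, $V(u_1+u_2)\leq V(u_1)+V(u_2)$ (add near-optimal sequences, use $|\nabla(\varphi+\psi)|\leq|\nabla\varphi|+|\nabla\psi|$, and pass to a common subsequence on which the relevant limits exist); and $V$ is positively \emph{$1$-homogeneous}, $V(\lambda u)=|\lambda|\,V(u)$. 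Subadditivity extends to countable sums via lower semicontinuity.

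For the inequality $\|Du\|_w\leq\int_\R P_w(\{u>t\})\,dt$ I would use the layer-cake decomposition. Assuming first $u\geq 0$, set $u_N:=\frac1N\sum_{j\geq 1}\chi_{\{u>j/N\}}$, so that $0\leq u_N\leq u$ and $u_N\to u$ in $L^1_w$ by dominated convergence. Subadditivity and homogeneity give
\[
\|Du_N\|_w\leq\frac1N\sum_{j\geq1}P_w(\{u>j/N\}).
\]
Since $g(t):=P_w(\{u>t\})$ is nonnegative and measurable, a Fubini computation shows that the average over $s\in[0,1/N]$ of the shifted sums $\frac1N\sum_{j\geq1}g(s+j/N)$ equals $\int_{1/N}^\infty g\leq\int_0^\infty g$; hence for each $N$ one can pick a shift $s_N\in[0,1/N]$ with $\frac1N\sum_{j\geq1}g(s_N+j/N)\leq\int_0^\infty g$. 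Applying the previous bound to the shifted layer cake (which still converges to $u$ in $L^1_w$) and using lower semicontinuity yields $\|Du\|_w\leq\int_0^\infty P_w(\{u>t\})\,dt$; the general case follows by splitting $u=u^+-u^-$ (or translating).

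The reverse inequality $\|Du\|_w\geq\int_\R P_w(\{u>t\})\,dt$ is where the real work lies. Choosing Lipschitz $\varphi_k\to u$ in $L^1_w$ with $\int_{\R^n}|\nabla\varphi_k|w\to\|Du\|_w$, for a.e.\ $t$ one has $\chi_{\{\varphi_k>t\}}\to\chi_{\{u>t\}}$ in $L^1_w$ along a subsequence, so lower semicontinuity of $P_w$ gives $P_w(\{u>t\})\leq\liminf_k P_w(\{\varphi_k>t\})$; Fatou's lemma then reduces everything to the \emph{coarea identity for Lipschitz functions}, $\int_{\R^n}|\nabla\varphi_k|w=\int_\R P_w(\{\varphi_k>t\})\,dt$. \textbf{The main obstacle is precisely this identity}: it amounts to identifying, for a.e.\ level $t$, the weighted perimeter $P_w(\{\varphi_k>t\})$ with the weighted surface measure $\int_{\{\varphi_k=t\}}w\,d\mathcal H^{n-1}$ and then invoking the classical coarea formula for the Lipschitz map $\varphi_k$ against $w$. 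This surface-measure representation of $P_w$ on regular sets is exactly what the doubling and $(1,1)$-Poincar\'e hypotheses buy (it is the content of Miranda's and Camfield's analysis, cf.\ Theorem~\ref{thm.camfield} and Proposition~\ref{prop.ineg-normes}); once it is in hand, the two inequalities close the proof. Consequently the cleanest writeup is to check the hypotheses and cite \cite[Proposition~4.2]{MIRANDA}.
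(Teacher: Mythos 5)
Your proposal takes the same route as the paper: the paper offers no proof of this proposition, simply recording it as a particular case of Miranda's coarea formula \cite[Proposition~4.2]{MIRANDA}, which is exactly your primary recommendation. Your supplementary sketch is a reasonable account of Miranda's argument, though note that for the reverse inequality you only need $\int_\R P_w(\{\varphi_k>t\})\,dt\leq\int_{\R^n}|\nabla\varphi_k|w$ (obtainable by the elementary truncations $\min(1,\max(0,(\varphi_k-t)/\epsilon))$ plus Fatou), not the full surface-measure identity you flag as the main obstacle --- indeed under mere $1$-admissibility one only has the two-sided comparison of Proposition~\ref{prop.ineg-normes}, not the equality of Theorem~\ref{thm.camfield}.
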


We shall also make use of the following boxing inequality, due (in the more general framework of measure metric spaces) to Kinnunen, Korte, Shanmugalingam and the third author \cite[Theorem~3.1]{KINNUNEN-ET-AL}.
\begin{theorem}[Boxing inequality]\label{thm.boxing}
Assume that $w$ is a $1$-admissible weight.
There exists a constant $C_B=C_B(C_D,C_P)>0$ such that for any open set $U\subseteq\R^n$ verifying $\int_Uw<\infty$, we can find sequences $(x_i)\subseteq U$ and $(r_i)\subseteq (0,+\infty)$ satisfying the following conditions:
\begin{itemize}
\item[(i)] $B(x_i,r_i)\cap B(x_j,r_j)=\emptyset$ for $i\neq j$;
\item[(ii)] $U\subseteq\bigcup_{i\in\N} B(x_i,5r_i)$;
\item[(iii)] $\sum_{i=0}^\infty h(B(x_i,5r_i))=\sum_{i=0}^\infty \frac{1}{5r_i} \int_{B(x_i,5r_i)} w\leq C_B P_w(U)$.
\end{itemize}
\end{theorem}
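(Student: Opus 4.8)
The plan is to follow the classical route to boxing inequalities (Gustin's inequality and its metric counterparts): attach to each point of $U$ a ``critical'' ball on which the density of $U$ equals exactly $\tfrac12$, estimate the measure functional $h$ on such a ball by a \emph{localized} perimeter via a relative isoperimetric inequality, and then extract a disjoint subfamily by the $5r$-covering lemma. I would first reduce to the case $P_w(U)=\|D\chi_U\|_w<\infty$, since otherwise there is nothing to prove. Writing $\mu:=w\,dx$, recall $h(B(x,r))=\mu(B(x,r))/r$. For fixed $x\in U$ the ratio $r\mapsto\mu(B(x,r)\cap U)/\mu(B(x,r))$ is continuous, equals $1$ for small $r$ (as $U$ is open, $B(x,r)\subseteq U$), and tends to $0$ as $r\to\infty$ (doubling on $\R^n$ forces $\mu(B(x,r))\to\infty$ while $\mu(U)<\infty$); hence there is $r_x>0$ at which the density is exactly $\tfrac12$.

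The core local estimate is a relative isoperimetric inequality on $B(x,r_x)$,
$$
\min\{\mu(B(x,r_x)\cap U),\mu(B(x,r_x)\setminus U)\}\le C\,r_x\,\|D\chi_U\|_w(B(x,\tau r_x)),
$$
derived from the weighted $(1,1)$-Poincar\'e inequality applied to $\chi_U$. Since $\chi_U$ is not Lipschitz, I would apply the inequality to Lipschitz approximations $\varphi_k\to\chi_U$ in $L^1_w$ with $\int|\nabla\varphi_k|w\to\|D\chi_U\|_w$ (such a sequence exists by the definition of the metric variation together with Proposition~\ref{prop.ineg-normes}) and pass to the limit, using that Miranda's theory makes $\|D\chi_U\|_w$ a Radon measure so that the right-hand side localizes to $B(x,\tau r_x)$. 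As the density is $\tfrac12$, the left-hand side equals $\tfrac12\mu(B(x,r_x))$, whence
$$
h(B(x,r_x))=\frac{\mu(B(x,r_x))}{r_x}\le 2C\,\|D\chi_U\|_w(B(x,\tau r_x)).
$$

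Next I would apply the $5r$-covering (Vitali) lemma to $\{B(x,r_x)\}_{x\in U}$, obtaining a countable pairwise disjoint subfamily $\{B(x_i,r_i)\}$ with $U\subseteq\bigcup_i B(x_i,5r_i)$; this yields (i) and (ii). Using the doubling inequality \eqref{eq.doubl} to compare $h(B(x_i,5r_i))$ with $h(B(x_i,r_i))$, and then the local estimate above, reduces (iii) to bounding $\sum_i\|D\chi_U\|_w(B(x_i,\tau r_i))$ by a constant multiple of $\|D\chi_U\|_w(\R^n)=P_w(U)$.

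The principal obstacle is precisely this last summation. Were it not for the dilation factor $\tau>1$ inherent in the Poincar\'e inequality, the disjointness (i) would at once give $\sum_i\|D\chi_U\|_w(B(x_i,r_i))\le\|D\chi_U\|_w(\R^n)=P_w(U)$; with the $\tau$-enlarged balls this fails, because a single point may lie in $B(x_i,\tau r_i)$ for balls of arbitrarily many distinct scales, so the enlarged balls need not have bounded overlap. I would control the sum by grouping the balls according to dyadic scales of their radii and, at each scale, charging the enlarged balls against the disjoint cores $B(x_i,r_i)$ through doubling and the lower density bound $\mu(B(x_i,r_i)\cap U)=\tfrac12\mu(B(x_i,r_i))$; the remaining summation across scales is handled by a maximal-function/stopping-time estimate for the perimeter measure $\|D\chi_U\|_w$. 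This multi-scale summation is the technical heart of the argument and is where the doubling and Poincar\'e constants combine into the final $C_B=C_B(C_D,C_P)$.
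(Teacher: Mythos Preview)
The paper does not give its own proof of this statement; it is quoted verbatim from \cite[Theorem~3.1]{KINNUNEN-ET-AL}. So there is no ``paper's proof'' to compare against, only the argument in the cited reference. Your overall strategy --- critical density radius, relative isoperimetric inequality from the $(1,1)$-Poincar\'e inequality, then Vitali --- is exactly the standard route used there, and the first two steps are fine as you describe them.

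The gap is in the step you flag yourself. You apply the $5r$-covering lemma to the family $\{B(x,r_x)\}_{x\in U}$, obtain disjoint cores $B(x_i,r_i)$, and then have to bound $\sum_i\|D\chi_U\|_w(B(x_i,\tau r_i))$. Your proposed cure --- group by dyadic scale, get bounded overlap at each scale, then sum across scales via an unspecified ``maximal-function/stopping-time estimate'' --- does not work as stated: at each fixed dyadic scale you indeed get $\sum_{i:\,r_i\sim 2^k}\|D\chi_U\|_w(B(x_i,\tau r_i))\le C\,P_w(U)$, but the sum over $k\in\Z$ is infinite, and there is no mechanism to make it converge. The perimeter measure carries no a priori multi-scale decay, so no maximal inequality rescues this.

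The standard (and much simpler) fix, which is what is done in \cite{KINNUNEN-ET-AL}, is to apply Vitali to the \emph{dilated} family $\{B(x,\tau r_x)\}_{x\in U}$ instead. You then get a disjoint subfamily $\{B(x_i,\tau r_{x_i})\}$ with $U\subseteq\bigcup_i B(x_i,5\tau r_{x_i})$; setting $r_i:=\tau r_{x_i}$ gives (i) and (ii) directly. For (iii), doubling yields $h(B(x_i,5r_i))\le C(C_D,\tau)\,h(B(x_i,r_{x_i}))$, your local estimate gives $h(B(x_i,r_{x_i}))\le 2C_P\,\|D\chi_U\|_w(B(x_i,\tau r_{x_i}))=2C_P\,\|D\chi_U\|_w(B(x_i,r_i))$, and now the balls on the right are disjoint, so the sum is at most $2C_P\,P_w(U)$. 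No multi-scale argument is needed.
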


We are now ready to study removable singularities of divergence-free vector fields
\subsection{Removable singularities}

Assume in this whole section that $w$ is a weight. Given vector field $v\in L^\infty_{1/w}(\R^n,\R^n)$ and $\varphi\in \Lip_c(\R^n)$, it is clear that we have, a.e. on $\R^n$:
$$
|v\cdot\nabla\varphi|\leq \left|\frac{v}{w}\right||\nabla\varphi|w\chi_{\supp \varphi}\leq \|v\|_{\infty,1/w} \|\nabla\varphi\|_\infty w\chi_{\supp \varphi}.
$$
Since $w$ is locally integrable, one can define the (extended) distributional divergence of $v$ by:
$$
\la \diver v,\varphi\ra:=-\int_{\R^n} v\cdot \nabla\varphi,
$$
for any $\varphi\in \Lip_c(\R^n)$.

\begin{definition}
Let $S\subseteq\R^n$ be compact and $f\in L^1_{\loc}(\R^n)$. 
The set $S$ is said to be \emph{$L^\infty_{1/w}$-removable with respect to the equation $\diver v=f$} in case for any 
$v\in L^\infty_{1/w}(\R^n,\R^n)$, the equality
\begin{equation}\label{eq.def-eff}
\la\diver v,\varphi\ra=\int_{\R^n} f\varphi
\end{equation}
for any $\varphi\in\Lip_c(\R^n)$ with $\supp \varphi \cap S=\emptyset$ (which we shall abbreviate ``$\diver v=f$ outside $S$'') implies that (\ref{eq.def-eff}) also holds for any $\varphi\in \Lip_c(\R^n)$ (which we shall abbreviate ``$\diver v=f$ in $\R^n$'').
\end{definition}

\begin{remark}\label{rmq.def-eq}
Assume that $w$ is an $A_1$ weight.
According to the fact that given $\varphi\in \Lip_c(\R^n)$, one has $\varphi\in L^1_w(\R^n)\cap L^1_{\loc}(\R^n)$ and $\nabla\varphi\in L^1_w(\R^n)\cap L^1_{\loc}(\R^n)$, \cite[Corollary~2.1.5]{Tu} ensures that the above definition remains unchanged in case $\Lip_c(\R^n)$ is replaced by $C^\infty_c(\R^n)$; yet it is not clear that the two definitions are equivalent without this extra assumption on $w$.
\end{remark}

\begin{remark}
Assume that $f\in L^1_{\loc}(\R^n)$ yields a solution $v_0\in L^\infty_{1/w}(\R^n,\R^n)$ to the equation $\diver v=f$, in the sense that $\diver v_0=f$ in $\R^n$. Assume also that $S$ is $L^\infty_{1/w}$-removable for the equation $\diver v=0$.
Fix now $v\in L^\infty_{1/w}(\R^n,\R^n)$ and assume that $\diver v=f$ outside $S$.
 Since it is clear that one has $\diver (v-v_0)=0$ outside $S$, the removability assumption made on $S$ ensures that one has $\diver (v-v_0)=0$ in $\R^n$; we hence get $\diver v=f$ in $\R^n$, and $S$ is $L^\infty_{1/w}$-removable for the equation $\diver v=f$. Conversely, one shows in a similar fashion that any $L^\infty_{1/w}$-removable set for $\diver v=f$ is also removable for $\diver v=0$. Hence we shall assume in the sequel that $f=0$.
\end{remark}

\subsection{A sufficient condition for a set to be $L^\infty_{1/w}$-removable} 
Let us observe first that it suffices, in order to show that a set is removable, that one is able to construct, in any of its neighborhood, an appropriate Lipschitz approximation of its characteristic function.

\begin{lemma}\label{lem.eff-en-p}
Assume that $S$ is compact and that for any $\epsilon>0$, there exists a neighborhood $U$ of $S$ satisfying $\int_U w\leq\epsilon$ together with a Lipschitz function $\chi\in \Lip_c(\R^n)$ satisfying $\supp \chi\subseteq U$, $0\leq\chi\leq 1$ in $U$, $\chi=1$ in a neighborhood of $S$ as well as $\|\nabla\chi\|_{1,w}\leq\epsilon$. Then $S$ is $L^\infty_{1/w}$-removable for the equation $\diver v=0$.
\end{lemma}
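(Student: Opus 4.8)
The plan is to fix an arbitrary test function $\varphi\in\Lip_c(\R^n)$ and to prove that $\int_{\R^n}v\cdot\nabla\varphi=0$; by the definition of the extended distributional divergence this is precisely the statement that $\diver v=0$ in $\R^n$, which is what removability demands. The mechanism is to excise a neighbourhood of $S$ from $\varphi$ using the cutoff functions supplied by the hypothesis, turning $\varphi$ into an admissible test function whose support misses $S$, and then to control the resulting error terms by the two smallness conditions $\int_U w\leq\epsilon$ and $\|\nabla\chi\|_{1,w}\leq\epsilon$.

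Concretely, I would fix $\epsilon>0$ and let $U$ and $\chi$ be the neighbourhood and the Lipschitz function provided by the assumption. Since $\chi\equiv1$ on some open neighbourhood $V$ of $S$, the product $\psi:=\varphi(1-\chi)$ vanishes on $V$; because $\varphi$ has compact support and $\chi\in\Lip_c(\R^n)$, the function $\psi$ again lies in $\Lip_c(\R^n)$ and satisfies $\supp\psi\cap S=\emptyset$. Thus the hypothesis ``$\diver v=0$ outside $S$'' applies to $\psi$ and yields $\int_{\R^n}v\cdot\nabla\psi=0$. Using the Leibniz rule $\nabla\psi=(1-\chi)\nabla\varphi-\varphi\nabla\chi$, valid almost everywhere for products of Lipschitz functions, this rearranges into
\[
\int_{\R^n}v\cdot\nabla\varphi=\int_{\R^n}\chi\,v\cdot\nabla\varphi+\int_{\R^n}\varphi\,v\cdot\nabla\chi,
\]
so it remains only to show that the two terms on the right tend to $0$ as $\epsilon\to0$.

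For the estimates I would use that $v\in L^\infty_{1/w}(\R^n,\R^n)$ means exactly $|v|\leq\|v\|_{\infty,1/w}\,w$ almost everywhere. For the first term, $0\leq\chi\leq1$ and $\supp\chi\subseteq U$ give
\[
\Big|\int_{\R^n}\chi\,v\cdot\nabla\varphi\Big|\leq\|v\|_{\infty,1/w}\,\|\nabla\varphi\|_\infty\int_U w\leq\|v\|_{\infty,1/w}\,\|\nabla\varphi\|_\infty\,\epsilon,
\]
while for the second term,
\[
\Big|\int_{\R^n}\varphi\,v\cdot\nabla\chi\Big|\leq\|v\|_{\infty,1/w}\,\|\varphi\|_\infty\int_{\R^n}|\nabla\chi|\,w=\|v\|_{\infty,1/w}\,\|\varphi\|_\infty\,\|\nabla\chi\|_{1,w}\leq\|v\|_{\infty,1/w}\,\|\varphi\|_\infty\,\epsilon.
\]
Since the left-hand side $\int_{\R^n}v\cdot\nabla\varphi$ does not depend on $\epsilon$, letting $\epsilon\to0$ forces it to vanish, which completes the argument.

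I do not expect any serious obstacle here: the whole proof rests on the single observation that the $L^\infty_{1/w}$ bound converts $|v|$ into a constant multiple of the weight $w$, after which $\int_U w$ and $\|\nabla\chi\|_{1,w}$ are precisely the quantities needed to annihilate the two error terms. The only points requiring mild care are verifying that $\psi=\varphi(1-\chi)$ is a genuine compactly supported Lipschitz test function with $\supp\psi\cap S=\emptyset$ (so that the removability-outside-$S$ hypothesis may legitimately be invoked), and justifying the product rule for Lipschitz functions almost everywhere; both are routine.
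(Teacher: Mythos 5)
Your argument is correct and coincides with the paper's own proof: both decompose $\varphi=\varphi\chi+\varphi(1-\chi)$, apply the hypothesis to the piece supported away from $S$, and bound $\langle\diver v,\varphi\chi\rangle$ by $\|v\|_{\infty,1/w}\bigl(\|\nabla\varphi\|_\infty\int_U w+\|\varphi\|_\infty\|\nabla\chi\|_{1,w}\bigr)\leq\epsilon\|v\|_{\infty,1/w}(\|\nabla\varphi\|_\infty+\|\varphi\|_\infty)$ before letting $\epsilon\to0$. Your version simply spells out the Leibniz-rule and support verifications that the paper leaves implicit.
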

\begin{proof}Assume that one has $\la\diver v,\varphi\ra=0$ for any $\varphi\in \Lip_c(\R^n)$ satisfying $\supp\varphi\cap S=\emptyset$. Fix then $\varphi\in\Lip_c(\R^n)$, let $\epsilon>0$ and let $\chi$ be associated to $U$ and $\epsilon$ as in the above assumption. Observe that one has, by hypothesis:
\begin{eqnarray*}
\la \diver v,\varphi\ra&=&\la\diver v,\varphi\chi\ra
\\&\leq & \|v\|_{\infty,1/w}\Big( \|\nabla\varphi\|_\infty \int_U w +\|\varphi\|_\infty \|\nabla\chi\|_{1,w}\Big)
\\&\leq &\epsilon \|v\|_{\infty,1/w} (\|\nabla\varphi\|_\infty+\|\varphi\|_\infty).
\end{eqnarray*}
It then follows that $\la \diver v,\varphi\ra=0$ for $\epsilon>0$ is arbitrary. This establishes that $S$ is $L^\infty_{1/w}$-removable for the equation $\diver v=0$.
\end{proof}

The following proposition will be useful while showing that compact sets with $\calH^h(S)=0$ are removable for the divergence equation.
\begin{proposition}\label{prop.smoothing}
Let $w$ be a $1$-admissible weight and assume that $V'\subset\subset V\subset\subset U\subset\subset\R^n$  are open sets, and assume that $|\partial V|=|\partial V'|=0$. For any $\epsilon>0$, there exists a Lipschitz function $\varphi\in \Lip_c(\R^n)$ satisfying 
$\supp \varphi\subseteq U$, $\varphi=1$ on $V'$ and
$$
\|\nabla\varphi\|_{1,w}\leq \epsilon +2P_w(V).
$$
\end{proposition}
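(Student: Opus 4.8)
The plan is to construct $\varphi$ by hand, starting from an $L^1_w$-approximation of $\chi_V$ by Lipschitz functions and correcting it with two fixed cut-offs; genuine mollification of $\chi_V$ is unavailable in the weighted setting, so the factor $2$ in the target bound will come from a truncation. One may assume $P_w(V)<+\infty$, the statement being trivial otherwise. Since $V'\subset\subset V\subset\subset U$, I would first fix $\eta,\zeta\in\Lip_c(\Rn)$ with $0\le\eta,\zeta\le1$, $\eta=1$ on $\overline{V'}$ and $\supp\eta\subseteq V$, and $\zeta=1$ on $\overline V$ and $\supp\zeta\subseteq U$ (built from Euclidean distances); note $\zeta\equiv1$ on $V'$ and $\nabla\zeta$ is supported in $U\setminus\overline V$. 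By the definition of $\|D\chi_V\|_w=P_w(V)$, for a given $\delta>0$ I would select $(\psi_k)\subseteq\Lip(\Rn)$ with $\psi_k\to\chi_V$ in $L^1_w$ and $\liminf_k\int_{\Rn}|\nabla\psi_k|\,w\le P_w(V)+\delta$, pass to a subsequence, and replace each $\psi_k$ by its truncation $u_k:=\min(1,\max(0,\psi_k))$; this keeps $0\le u_k\le1$, preserves $u_k\to\chi_V$ in $L^1_w$ (whence $u_k\to1$ a.e.\ on $V$ along a further subsequence) and does not increase the gradient, so $\limsup_k\int_{\Rn}|\nabla u_k|\,w\le P_w(V)+\delta$.

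The candidate is then
$$\varphi_k:=\zeta\cdot\max\bigl(\eta,\min(1,2u_k)\bigr).$$
It belongs to $\Lip_c(\Rn)$ with $\supp\varphi_k\subseteq\supp\zeta\subseteq U$, and $0\le\varphi_k\le1$. The decisive point is that $\varphi_k\equiv1$ on all of $V'$: there $\zeta=\eta=1$ force $\varphi_k\ge1$, while $\varphi_k\le1$ everywhere. Thus the pointwise normalization on $V'$ is imposed by the cut-off $\eta$, not by the function $u_k$, which is only $L^1_w$-close to $1$.

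For the gradient, writing $g_k:=\max(\eta,\min(1,2u_k))$ and $\nabla\varphi_k=\zeta\nabla g_k+g_k\nabla\zeta$, I would estimate
$$\int_{\Rn}|\nabla\varphi_k|\,w\le\int_{\Rn}|\nabla g_k|\,w+\int_{\Rn}g_k\,|\nabla\zeta|\,w.$$
On $\{\min(1,2u_k)>\eta\}$ one has $|\nabla g_k|=|\nabla\min(1,2u_k)|=2|\nabla u_k|\,\chi_{\{u_k<1/2\}}$, while on $\{\eta>\min(1,2u_k)\}\subseteq\{u_k<1/2\}\cap V$ one has $|\nabla g_k|=|\nabla\eta|$; hence
$$\int_{\Rn}|\nabla g_k|\,w\le 2\int_{\Rn}|\nabla u_k|\,w+\|\nabla\eta\|_\infty\int_{\{u_k<1/2\}\cap V}w.$$
Here the factor $2$ does the essential work: it confines the correction term to the ``defect set'' $\{u_k<1/2\}$, which is $w$-small because $u_k\to1$ a.e.\ on $V$ and $w\in L^1(V)$, so dominated convergence sends $\int_{\{u_k<1/2\}\cap V}w\to0$. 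For the second integral, on $\supp\nabla\zeta\subseteq U\setminus\overline V$ one has $\eta=0$, so $g_k=\min(1,2u_k)\le2u_k$, and $\int g_k|\nabla\zeta|\,w\le2\|\nabla\zeta\|_\infty\int_{U\setminus\overline V}u_k\,w\to0$ since $u_k\to\chi_V=0$ in $L^1_w$ there. Collecting, $\limsup_k\int_{\Rn}|\nabla\varphi_k|\,w\le2P_w(V)+2\delta$; taking $\delta=\epsilon/4$ and $k$ large, $\varphi:=\varphi_k$ satisfies $\|\nabla\varphi\|_{1,w}\le\epsilon+2P_w(V)$ together with all the structural requirements.

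The step I expect to be the main obstacle is exactly the one isolated above: forcing $\varphi\equiv1$ pointwise on the whole of $V'$ while keeping the weighted gradient within $2P_w(V)+\epsilon$. A naive $\max$ with a cut-off equal to $1$ on $V'$ would add the fixed, non-negligible quantity $\|\nabla\eta\|_{1,w}$; the doubling truncation $\min(1,2u_k)$ is what makes this correction negligible, by pushing the only region where the cut-off's gradient is felt into the $w$-small set $\{u_k<1/2\}$. I note that this construction uses only the definition of the weighted variation and elementary truncation, and does not seem to require the hypotheses $|\partial V|=|\partial V'|=0$ or full $1$-admissibility, which are presumably needed for the theorem in which this proposition is applied.
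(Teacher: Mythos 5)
Your construction is correct, and it reaches the conclusion by a genuinely different decomposition than the one used in the paper. The paper fixes a single cutoff $\psi$ with $\chi_{V'}\leq\psi\leq\chi_V$ (plus a cutoff $\theta$ forcing support in $U$) and \emph{interpolates}: $\tilde\varphi_k:=\psi+(1-\psi)\varphi_k$, so that $\nabla\tilde\varphi_k=(1-\varphi_k)\nabla\psi+(1-\psi)\nabla\varphi_k$; the dangerous cutoff term $(1-\varphi_k)\nabla\psi$ is killed because $\nabla\psi$ lives in $V$, where $|1-\varphi_k|=|\chi_V-\varphi_k|\to0$ in $L^1_w$. You instead take $\zeta\cdot\max\bigl(\eta,\min(1,2u_k)\bigr)$, so the cutoff gradient $\nabla\eta$ only survives on the set $\{u_k<1/2\}\cap V$, which has small $w$-measure; the factor $2$ in your bound is produced by the doubling $2u_k$ before truncation, whereas in the paper it comes from a (generous) pointwise estimate on $V\setminus V'$. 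Both arguments rest on the same two pillars~---~the defining approximating sequence for $P_w(V)$ can be truncated to $[0,1]$ without increasing the weighted gradient, and every correction term is controlled by $\|u_k-\chi_V\|_{1,w}\to0$~---~so the proofs are close cousins, but the mechanisms localizing the cutoff's contribution are genuinely different. Your closing observation is also accurate and a small gain in generality: the paper's casewise pointwise estimate explicitly invokes $|\partial V|=|\partial V'|=0$, whereas in your version each correction term is supported, up to Lebesgue-null sets, in the open sets $V$ and $U\setminus\overline V$ (since $\nabla\eta$ and $\nabla\zeta$ vanish a.e.\ on their level sets), so neither the negligibility of the boundaries nor the $1$-admissibility of $w$ is used~---~only $w\in L^1_{\loc}$, which makes $\chi_V\in L^1_w$ and hence $P_w(V)$ meaningful. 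Two cosmetic points you may wish to tighten: on the coincidence set $\{\eta=\min(1,2u_k)\}$ one should appeal to the standard fact that the gradients of two Lipschitz functions agree a.e.\ where the functions do, so that your casewise formula for $\nabla g_k$ holds a.e.; and the convergence $\int_{\{u_k<1/2\}\cap V}w\to0$ follows even more directly from Chebyshev, since this integral is at most $2\|u_k-\chi_V\|_{1,w}$, avoiding the passage to an a.e.-convergent subsequence.
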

\begin{proof}
Start by choosing $\psi\in \Lip_c(\R^n)$ satisfying $\chi_{V'}\leq\psi\leq\chi_{V}$. Choose, according to the definition of $P_w(V)$, a sequence $(\varphi_k)\subseteq \Lip(\R^n)$ converging in $L^1_{w}(\R^n)$ to $\chi_V$ and such that for any $k\in\N$, one has:
$$
\|\nabla\varphi_k\|_{1,w}\leq \frac{\epsilon}{4} + P_w(V).
$$
Replacing if necessary $\varphi_k$ by $\min[1,\max(\varphi_k,0)]$ (which does not increase the norm of the gradient of $\varphi_k$ on the complement of a negligible set), one can assume that one has $0\leq\varphi_k\leq 1$ for each $k$. Replacing if necessary $\varphi_k$ by $\theta\varphi_k$ for $k\in\N$, where $\theta\in\Lip_c(\R^n)$ satisfies $0\leq\theta\leq 1$, as well as $\theta=1$ on $V$ and $\theta=0$ outside $U$, and observing that one has, for any $k\in\N$:
$$
|\nabla(\theta\varphi_k)|w\leq |\nabla\theta| |\varphi_k-\chi_V|w+ |\nabla\varphi_k|w\leq \|\nabla\theta\|_\infty|\varphi_k-\chi_V|w+ |\nabla\varphi_k|w,
$$
we may also assume that $\supp\varphi_k\subseteq U$ for any $k\in\N$.

Define now, for $k\in\N$:
$$
\tilde{\varphi}_k:=\psi+(1-\psi)\varphi_k.
$$
It is clear that $\tilde{\varphi}_k$ is Lipschitz and has compact support for any $k\in\N$; moreover one computes, for a.e. $x\in\R^n$ (recall that one has $|\partial V'|=|\partial V|=0$):
$$
|\nabla\tilde{\varphi}_k(x)|\leq
\begin{cases}0&\text{if }x\in V',\\ 2|\nabla\varphi_k(x)|+\|\nabla\psi\|_\infty |1-\varphi_k(x)|&\text{if }x\in V\setminus V',\\
|\nabla\varphi_k(x)| &\text{if }x\in \complement V.
\end{cases}
$$
We hence get, a.e. on $\R^n$, for any $k\in\N$:
$$
|\nabla\tilde{\varphi}_k|w\leq 2|\nabla\varphi_k|w+\|\nabla\psi\|_\infty |\chi_V-\varphi_k|w.
$$
Since we have $\varphi_k\to \chi_V$ in $L^1_{w}(\R^n)$, there exists $k\in\N$ such that:
$$
\|\nabla\tilde{\varphi}_k\|_{1,w} \leq \epsilon + 2P_w(V).
$$
We can hence take $\varphi:=\tilde{\varphi}_k$, and the proof is complete.
\end{proof}

The next lemma, taken from \cite[Lemma~6.2]{KKST}, will be of some help. We include its proof for the sake of clarity.
\begin{lemma}\label{lem.tuominen}
Assume that $w$ is a $1$-admissible weight. Given $x\in\R^n$ and $r>0$, there exists $\rho\in [r,2r]$ such that one has:
$$
P_w(B(x,\rho))\leq C h(B(x,\rho)),
$$
where $C>0$ is independent of $x$ and $\rho$.
\end{lemma}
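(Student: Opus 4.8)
The plan is to run a standard ``good radius'' argument: apply the coarea formula (Proposition~\ref{prop.coaire}) to a tent-shaped cutoff supported on the annulus $B(x,2r)\setminus B(x,r)$, and then select a good slice by averaging (pigeonhole) over the radius, using the doubling property to re-express everything in terms of $h$.

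First I would introduce the function $u\in\Lip_c(\R^n)$ given by $u(y):=1$ for $|y-x|\leq r$, $u(y):=2-|y-x|/r$ for $r\leq|y-x|\leq 2r$, and $u(y):=0$ for $|y-x|\geq 2r$. Then $|\nabla u|=1/r$ on the annulus and vanishes elsewhere, so that the doubling property (\ref{eq.doubl}) yields
\[
\|\nabla u\|_{1,w}=\frac1r\int_{B(x,2r)\setminus B(x,r)}w\leq\frac1r\int_{B(x,2r)}w\leq\frac{C_D}{r}\int_{B(x,r)}w=C_D\,h(B(x,r)).
\]
Since $u$ is a compactly supported Lipschitz function and $w$ is locally integrable, one has $u\in L^1_{w}(\R^n)$ and $\|Du\|_w\leq\|\nabla u\|_{1,w}<\infty$ (directly from the definition of $\|Du\|_w$, or from Proposition~\ref{prop.ineg-normes}).

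Next I would feed $u$ into the coarea formula. Since $\{u>t\}=\R^n$ for $t\leq 0$ (which has zero perimeter), $\{u>t\}=\emptyset$ for $t\geq 1$, and $\{u>t\}=B(x,r(2-t))$ for $0<t<1$, Proposition~\ref{prop.coaire} and the substitution $\rho=r(2-t)$ give
\[
\|Du\|_w=\int_\R P_w(\{u>t\})\,dt=\int_0^1 P_w(B(x,r(2-t)))\,dt=\frac1r\int_r^{2r}P_w(B(x,\rho))\,d\rho\leq C_D\,h(B(x,r)).
\]
Because this average of $\rho\mapsto P_w(B(x,\rho))$ over the interval $[r,2r]$ of length $r$ is at most $C_D\,h(B(x,r))$, there must exist some $\rho\in[r,2r]$ with $P_w(B(x,\rho))\leq C_D\,h(B(x,r))$. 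To finish, I would compare the two radii: as $r\leq\rho\leq 2r$ we have $B(x,r)\subseteq B(x,\rho)$ and $1/r\leq 2/\rho$, hence $h(B(x,r))\leq 2\,h(B(x,\rho))$, and therefore $P_w(B(x,\rho))\leq 2C_D\,h(B(x,\rho))$, giving the claim with $C:=2C_D$, independent of $x$ and $\rho$.

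The argument is essentially routine, the two structural inputs being the coarea formula and the doubling inequality, so there is no genuine obstacle. The only points needing a little care are the identification of the superlevel sets of $u$ as balls (so that the coarea integral becomes an average of ball perimeters) and the final comparison of $h$ at the two comparable radii $r$ and $\rho$, which is precisely what converts the bound $C_D\,h(B(x,r))$ into a bound in terms of $h(B(x,\rho))$ as demanded by the statement.
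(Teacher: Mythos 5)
Your proposal is correct and follows essentially the same route as the paper: the same tent-shaped cutoff, the bound $\|D\varphi\|_w\leq\|\nabla\varphi\|_{1,w}\leq C_D h(B(x,r))$ via the doubling property, the coarea formula to turn this into an average of $P_w(B(x,\rho))$ over $\rho\in[r,2r]$, a pigeonhole choice of a good radius, and the final comparison $h(B(x,r))\leq 2h(B(x,\rho))$ yielding $C=2C_D$.
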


\begin{proof}
Fix $x\in\R^n$ and $r>0$. Define $\varphi\in \Lip_c(\R^n)$ by the formula:
$$
\varphi(y):=\max\left[ 0,\min\left(2-\frac 1r |y-x|, 1\right)\right],
$$
for $y\in\R^n$. It is clear that one has $|\nabla\varphi|\leq \frac 1r \chi_{B(x,2r)\setminus B(x,r)}$ on $\R^n$. Hence we get, using M.~Miranda's coarea formula (Proposition~\ref{prop.coaire}) and Proposition~\ref{prop.ineg-normes}:
\begin{multline*}
\int_0^1 P_w(B(x,(2-t)r))\,dt=\int_\R P_w(\{\varphi>t\})=\|D\varphi\|_w\\\leq  \int_{\R^n} |\nabla\varphi|w\leq \frac 1r \int_{B(x,2r)\setminus B(x,r)} w\leq C_D h(B(x,r)).
\end{multline*}
If we now choose $\rho\in [r,2r]$ such that one has $$P_w(B(x,\rho))\leq \int_0^1 P_w(B(x,(2-t)r))\,dt,$$ we get:
$$
P_w(B(x,\rho))\leq C_D h(B(x,r))\leq 2C_D h(B(x,\rho)),
$$
and the lemma is proved.
\end{proof}

The next proposition is a first step towards showing $\calH^h$-negligible sets are removable.
\begin{proposition}\label{prop.approx-bv}
Assume that $w$ is a $1$-admissible weight. If $S$ is compact and satisfies $\calH^h(S)=0$, then for any $\epsilon>0$ and any neighborhood $U$ of $S$, one can find an open subset $V\subset\subset U$, satisfying $S\subseteq V$ as well as:
$$
P_w(V)\leq\epsilon.
$$
Moreover $V$ can be chosen to be a finite union of balls.
\end{proposition}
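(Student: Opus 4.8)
The plan is to cover $S$ by finitely many balls of small total $h$-measure, to enlarge each of them slightly to a concentric ball whose weighted perimeter is comparable to its $h$-measure via Lemma~\ref{lem.tuominen}, and to let $V$ be the union of these enlarged balls; subadditivity of $P_w$ then converts the smallness of the total $h$-measure into the desired bound on $P_w(V)$.

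First I would fix the scale. Set $d_0:=\dist(S,\R^n\setminus U)>0$ (with $d_0:=+\infty$ if $U=\R^n$) and choose $\delta\in(0,d_0/4)$. Since $\calH^h(S)=0$ we have $\calH^h_\delta(S)=0$, so there is a countable covering of the compact set $S$ by open balls $(B(x_j,r_j))_{j\in J}$ with $r_j\leq\delta$ and $\sum_j h(B(x_j,r_j))\leq\eta$, where $\eta>0$ is a threshold to be fixed below. By compactness I may assume $J$ is finite, and, discarding the balls that do not meet $S$ (which keeps $S$ covered and only decreases the sum), I may also assume $B(x_j,r_j)\cap S\neq\emptyset$ for every $j$; in particular each center satisfies $\dist(x_j,S)<r_j\leq\delta$.

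Next, for each $j$ I would apply Lemma~\ref{lem.tuominen} to obtain a radius $\rho_j\in[r_j,2r_j]$ with $P_w(B(x_j,\rho_j))\leq C\,h(B(x_j,\rho_j))$, and set $V:=\bigcup_{j\in J}B(x_j,\rho_j)$. This $V$ is a finite union of balls, hence open, and since $\rho_j\geq r_j$ it contains $\bigcup_j B(x_j,r_j)\supseteq S$. Every point $z$ of $B(x_j,\rho_j)$ satisfies $\dist(z,S)\leq|z-x_j|+\dist(x_j,S)<\rho_j+\delta\leq 3\delta$, so $\overline V$ is a compact subset of $\{x:\dist(x,S)\leq 3\delta\}\subseteq U$ (using $3\delta<d_0$); thus $V\subset\subset U$, as required. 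For the perimeter I would use subadditivity of $P_w$ over the finite union together with Lemma~\ref{lem.tuominen} and the doubling inequality (\ref{eq.doubl}), which for $\rho_j\in[r_j,2r_j]$ gives $h(B(x_j,\rho_j))\leq C_D\,h(B(x_j,r_j))$, whence
$$
P_w(V)\leq\sum_{j\in J}P_w(B(x_j,\rho_j))\leq C\sum_{j\in J}h(B(x_j,\rho_j))\leq C C_D\sum_{j\in J}h(B(x_j,r_j))\leq C C_D\,\eta.
$$
Choosing $\eta:=\epsilon/(C C_D)$ at the outset then yields $P_w(V)\leq\epsilon$.

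The one point that genuinely needs justification, and which I expect to be the main obstacle, is the subadditivity $P_w\bigl(\bigcup_j A_j\bigr)\leq\sum_j P_w(A_j)$ used in the display: this is not monotonicity of an integral but a property of the relaxed metric variation $\|D\cdot\|_w$, and it follows from its submodularity $\|D(u\vee v)\|_w+\|D(u\wedge v)\|_w\leq\|Du\|_w+\|Dv\|_w$ applied to characteristic functions, together with $P_w\geq 0$. That inequality can in turn be read off from the definition of $\|D\cdot\|_w$ by approximating $u$ and $v$ by Lipschitz sequences $(\varphi_k)$, $(\psi_k)$ and invoking the pointwise a.e. identity $|\nabla(\varphi_k\vee\psi_k)|+|\nabla(\varphi_k\wedge\psi_k)|=|\nabla\varphi_k|+|\nabla\psi_k|$, since $\varphi_k\vee\psi_k\to u\vee v$ and $\varphi_k\wedge\psi_k\to u\wedge v$ in $L^1_w$. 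Everything else in the argument is routine bookkeeping with the doubling constant, and the fact that $V$ is a finite union of balls is immediate from the construction.
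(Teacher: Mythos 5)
Your argument is correct and follows essentially the same route as the paper's proof: cover $S$ by finitely many small balls of total $h$-mass at most $\eta$, enlarge each via Lemma~\ref{lem.tuominen} to a concentric ball of comparable weighted perimeter, take $V$ to be the finite union, and conclude by subadditivity of $P_w$ and the doubling estimate $h(B(x_j,\rho_j))\leq C_D h(B(x_j,r_j))$. The only difference is cosmetic: the paper simply cites Miranda's subadditivity of the weighted perimeter (\cite[Proposition~4.7]{MIRANDA}) where you rederive it from submodularity of $\|D\cdot\|_w$, and you spell out the verification that $V\subset\subset U$, which the paper leaves implicit.
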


\begin{proof}
Fix $\eta>0$.
Let $0<\delta<\frac 14 \dist(S,\complement U)$ and observe that $\calH^h_\delta(S)=0$. Hence there are balls $B(x_j,r_j)$ for some $x_j\in \R^n$, $r_j>0$, $j\in J$ covering $S$ (since $S$ is compact we can assume $J$ to be finite), satisfying $r_j\leq\delta$ for each $j\in J$ and:
$$
\sum_{j\in J} h(B(x_j,r_j))\leq \eta.
$$
According to Lemma~\ref{lem.tuominen}, choose for each $j\in J$ a radius $\rho_j\in [r_j,2r_j]$ for which one has:
$$
P_w(B(x_j,\rho_j))\leq \frac{C}{\rho_j} \int_{B(x_j,\rho_j)}w;
$$
observe that the doubling property of $w$ yields, for any $j\in J$:
$$
P_w(B(x_j,\rho_j))\leq \frac{CC_D}{r_j} \int_{B(x_j,r_j)}w=CC_Dh(B(x_j,r_j)).
$$
Letting $V:=\bigcup_{j\in J} B(x_j,\rho_j)\subset\subset U$, we hence get, using the subadditivity of the weighted perimeter (\cite[Proposition~4.7]{MIRANDA}):
$$
P_w(V)\leq\sum_{j\in J} P_w(B(x_j,\rho_j))\leq CC_D \sum_{j\in J} h(B(x_j,r_j))\leq CC_D\eta,
$$
from which the desired inequality readily follows.
\end{proof}

The previous proposition together with Lemma~\ref{lem.eff-en-p} and Proposition~\ref{prop.smoothing}, yield a sufficient removability condition.
\begin{theorem}\label{thm.suff}
Assume that $w$ is a $1$-admissible weight. Then any compact set $S\subseteq\R^n$ verifying $\calH^h(S)=0$, is $L^\infty_{1/w}$-removable for $\diver v=0$.
\end{theorem}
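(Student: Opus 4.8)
The plan is to verify the hypotheses of Lemma~\ref{lem.eff-en-p}: it suffices, for each $\epsilon>0$, to produce a neighborhood $U$ of $S$ with $\int_U w\leq\epsilon$ together with a cut-off $\chi\in\Lip_c(\R^n)$ supported in $U$, equal to $1$ near $S$, taking values in $[0,1]$, and with $\|\nabla\chi\|_{1,w}\leq\epsilon$. Everything will be assembled from the three results just established, namely Lemma~\ref{lem.leb-negl}, Proposition~\ref{prop.approx-bv} and Proposition~\ref{prop.smoothing}, so the argument is essentially a matter of threading these together and distributing the error budget.

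First I would fix $\epsilon>0$ and choose the outer neighborhood $U$. Since $\calH^h(S)=0$, Lemma~\ref{lem.leb-negl} gives $|S|=0$, hence $\int_S w=0$; because $w\,dx$ is a locally finite (Radon) measure, its outer regularity yields an open set $U_0\supseteq S$ with $\int_{U_0} w\leq\epsilon$, and intersecting with a large ball containing the compact set $S$ we may assume $U:=U_0\cap B\subset\subset\R^n$ is bounded, still contains $S$, and still satisfies $\int_U w\leq\epsilon$. This takes care of the first requirement in Lemma~\ref{lem.eff-en-p}.

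Next I would build a nested pair of intermediate open sets. Applying Proposition~\ref{prop.approx-bv} with the neighborhood $U$ and tolerance $\epsilon/4$, I obtain an open set $V\subset\subset U$ that is a finite union of balls, contains $S$, and satisfies $P_w(V)\leq\epsilon/4$. Applying Proposition~\ref{prop.approx-bv} a second time, now with the neighborhood $V$ (with any tolerance), I obtain an open set $V'\subset\subset V$, again a finite union of balls, with $S\subseteq V'$. Since finite unions of balls have boundaries contained in finitely many spheres, we have $|\partial V|=|\partial V'|=0$, so the chain $V'\subset\subset V\subset\subset U\subset\subset\R^n$ meets all the standing hypotheses of Proposition~\ref{prop.smoothing}.

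Finally I would smooth and truncate. Proposition~\ref{prop.smoothing}, applied to this chain with tolerance $\epsilon/2$, produces $\varphi\in\Lip_c(\R^n)$ with $\supp\varphi\subseteq U$, $\varphi=1$ on $V'$, and
$$
\|\nabla\varphi\|_{1,w}\leq \frac\epsilon2+2P_w(V)\leq\frac\epsilon2+2\cdot\frac\epsilon4=\epsilon.
$$
Replacing $\varphi$ by $\chi:=\min(1,\max(\varphi,0))$ forces $0\leq\chi\leq1$ while leaving the support inside $U$, keeping $\chi=1$ on the neighborhood $V'$ of $S$, and not increasing the gradient, so $\|\nabla\chi\|_{1,w}\leq\|\nabla\varphi\|_{1,w}\leq\epsilon$ (the gradient of the truncation is dominated by that of $\varphi$ off a null set). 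Thus $U$ and $\chi$ satisfy every condition in Lemma~\ref{lem.eff-en-p}, and since $\epsilon>0$ was arbitrary, that lemma yields the $L^\infty_{1/w}$-removability of $S$. I do not expect any genuine obstacle here, as the hard analytic work already lives in the preceding propositions; the only points requiring care are the bookkeeping that makes the smoothing estimate close at exactly $\epsilon$, and securing the compact nesting with null boundaries, both of which are delivered by the ``finite union of balls'' conclusion of Proposition~\ref{prop.approx-bv}.
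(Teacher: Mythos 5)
Your proposal is correct and follows essentially the same route as the paper: Lemma~\ref{lem.leb-negl} plus outer regularity to get a small-measure neighborhood $U$, Proposition~\ref{prop.approx-bv} to get $V\subset\subset U$ with $P_w(V)\leq\epsilon/4$, and Proposition~\ref{prop.smoothing} to produce the cut-off fed into Lemma~\ref{lem.eff-en-p}. If anything, you are slightly more careful than the paper on two minor points it glosses over --- explicitly securing the inner set $V'$ with $|\partial V'|=0$ by a second application of Proposition~\ref{prop.approx-bv}, and truncating to enforce $0\leq\chi\leq1$.
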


\begin{proof}According to Lemma~\ref{lem.leb-negl}, the set $S$ is Lebesgue-negligible. So we may fix $\epsilon>0$, and let $U$ be a neighborhood of $S$ for which one has $\int_Uw\leq\epsilon$ (we may moreover assume that $U$ consists of a finite union of balls).
Start, according to Proposition~\ref{prop.approx-bv}, by choosing an open set $V\subset\subset U$ satisfying $P_w(V)\leq\frac{\epsilon}{4}$ as well as $S\subseteq V$ (and consisting of a finite union of balls). Choose, according to Proposition~\ref{prop.smoothing}, a Lipschitz function $\varphi\in \Lip_c(\R^n)$ equal to $1$ in a neighborhood $V'\subset\subset V$ of $S$ and verifying $\|\nabla\chi\|_{1,w}\leq \frac{\epsilon}{2}+2P_w( V)$. One now computes:
$$
\|\nabla\chi\|_{1,w}\leq \frac{\epsilon}{2}+2P_w(V)\leq \epsilon,
$$
and it follows from Lemma~\ref{lem.eff-en-p} that $S$ is $L^\infty_{1/w}$-removable for $\diver v=0$.
\end{proof}

\subsection{A necessary condition for a set to be $L^\infty_{1/w}$-removable} 

We first state the following estimate for measures satisfying a weighted-Frostman condition. It is similar to \cite[Theorem~2.6.3]{Tu}, although we do not require here $w$ to be $A_1$.
\begin{proposition}\label{prop.dual}
Assume that $w$ is a $1$-admissible weight.
Let $\mu$ be a (nonnegative) Radon measure on $\R^n$ and assume that for any $x\in\R^n$ and any $r>0$, we have :
\begin{equation}\label{eq.frostman-mu}
\mu(B(x,r))\leq C h(B(x,r)).
\end{equation}
Then for any $\varphi\in\Lip_c(\R^n)$ we have:
$$
\int_{\R^n} |\varphi|\,d\mu\leq M\|\nabla\varphi\|_{1,w},
$$
where $M>0$ is independent of $\varphi$.
\end{proposition}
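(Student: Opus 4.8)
The plan is to combine the layer-cake representation of $\int|\varphi|\,d\mu$ with the boxing inequality (Theorem~\ref{thm.boxing}) and M.~Miranda's coarea formula (Proposition~\ref{prop.coaire}), using the Frostman hypothesis (\ref{eq.frostman-mu}) to pass from balls to the measure $\mu$. First I would reduce to the case $\varphi\geq 0$: replacing $\varphi$ by $|\varphi|$ leaves $\int_{\R^n}|\varphi|\,d\mu$ unchanged and does not affect $\|\nabla\varphi\|_{1,w}$, since $|\nabla|\varphi||=|\nabla\varphi|$ a.e. For a nonnegative $\varphi$ the layer-cake formula gives
$$
\int_{\R^n}\varphi\,d\mu=\int_0^\infty\mu(\{\varphi>t\})\,dt,
$$
the integrand $t\mapsto\mu(\{\varphi>t\})$ being measurable because it is non-increasing.

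Next I would estimate each super-level set. Fix $t>0$ and put $U_t:=\{\varphi>t\}$. Since $\varphi$ is continuous with compact support, $U_t$ is open and bounded, so $\int_{U_t}w<\infty$ and the boxing inequality applies: there are balls $B(x_i,r_i)$ with pairwise disjoint interiors such that $U_t\subseteq\bigcup_i B(x_i,5r_i)$ and $\sum_i h(B(x_i,5r_i))\leq C_B\,P_w(U_t)$. Covering $U_t$ and applying the Frostman condition (\ref{eq.frostman-mu}) to each ball $B(x_i,5r_i)$ yields
$$
\mu(U_t)\leq\sum_i\mu(B(x_i,5r_i))\leq C\sum_i h(B(x_i,5r_i))\leq CC_B\,P_w(U_t).
$$

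Finally I would integrate in $t$ and invoke the coarea formula. Before doing so I should verify its hypotheses: $\varphi\in L^1_w(\R^n)$ because $\varphi$ has compact support and $w$ is locally integrable, and $\|D\varphi\|_w\leq\|\nabla\varphi\|_{1,w}<\infty$ by Proposition~\ref{prop.ineg-normes}. Since $P_w\geq 0$, enlarging the range of integration and then applying Proposition~\ref{prop.coaire} gives
$$
\int_0^\infty\mu(U_t)\,dt\leq CC_B\int_0^\infty P_w(U_t)\,dt\leq CC_B\int_\R P_w(\{\varphi>t\})\,dt=CC_B\|D\varphi\|_w\leq CC_B\|\nabla\varphi\|_{1,w},
$$
so that the constant $M:=CC_B$ works. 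The genuine content of the argument, and the step I expect to be the main obstacle, is the passage from $\mu(U_t)$ to the weighted perimeter $P_w(U_t)$: this is precisely where the boxing inequality is indispensable, since it converts a covering whose total codimension-one content $\sum_i h$ is controlled by $P_w(U_t)$ into a bound that, via (\ref{eq.frostman-mu}), transfers to $\mu$. Without it one would have no bridge between the measure of the super-level set and the perimeter produced by the coarea formula; the remaining points (layer-cake validity and measurability of the level-set functions) are routine.
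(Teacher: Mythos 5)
Your argument is correct and follows essentially the same route as the paper's proof: bound $\mu(U_t)$ by $C C_B P_w(U_t)$ via the boxing inequality and the Frostman condition, then integrate in $t$ using Cavalieri's principle and Miranda's coarea formula, and finish with Proposition~\ref{prop.ineg-normes} together with $|\nabla|\varphi||=|\nabla\varphi|$ a.e. The only cosmetic difference is that the paper works directly with $U_t=\{|\varphi|>t\}$ rather than first reducing to nonnegative $\varphi$.
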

\begin{proof}
Fix $t>0$ and let $U_t$ be the bounded open set defined by $U_t:=\{x\in\R^n:|\varphi(x)|>t\}$.
According to the weighted boxing inequality (Theorem~\ref{thm.boxing}), there exists $(x_i)\subseteq U_t$ and $(r_i)\subseteq (0,\infty)$ satisfying conditions (i) to (iii) in Theorem~\ref{thm.boxing}, with $U_t$ instead of $U$.
Write then
$$
\mu(U_t)\leq\sum_{i=0}^\infty \mu(B(x_i,5r_i))\leq  C\sum_{i=0}^\infty \frac{1}{5r_i}\int_{B(x_i,5r_i)}w\leq C C_B P_w(U_t).
$$
Now we have, using Cavalieri's principle and Miranda's coarea formula (Proposition~\ref{prop.coaire}):
$$
\int_{\R^n}|\varphi|\,d\mu=\int_0^\infty \mu(U_t)\,dt\leq CC_B\int_0^\infty P_w(U_t)\,dt=CC_B\|D|\varphi|\|_w.
$$
According to Proposition~\ref{prop.ineg-normes}, we have $$
\|D|\varphi|\|_w\leq  \|\nabla |\varphi|\|_{1,w}=\|\nabla\varphi\|_{1,w},
$$
since one has $|\nabla |\varphi||=|\nabla\varphi|$ almost everywhere. This finishes the proof since one can take $M:=CC_B>0$.
\end{proof}

The following lemma, of functional analytic nature, avoids unnecessary technicalities in the sequel, and has been suggested by J.~Bo\"el.
\begin{lemma}\label{lem.boel}
Let $X$ and $Y$ be two normed spaces, and assume that $T\colon X\to Y$ is linear and \emph{isometric}, \emph{i.e.} that one has $\|T(x)\|_Y=\|x\|_X$ for every $x\in X$. Then its adjoint map $T^*\colon Y^*\to X^*$ is surjective.
\end{lemma}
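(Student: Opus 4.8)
The plan is to show that every $f\in X^*$ lies in the image of $T^*$, i.e.\ to construct $g\in Y^*$ with $T^*(g)=f$; recalling that $T^*(g)=g\circ T$, this amounts to finding a bounded linear functional $g$ on $Y$ satisfying $g\circ T=f$. First I would fix $f\in X^*$ and work on the range $Z:=T(X)\subseteq Y$. Since $T$ is isometric we have $\|T(x)\|_Y=\|x\|_X$, so $T(x)=0$ forces $x=0$; thus $T$ is injective and the formula $\tilde g(T(x)):=f(x)$ unambiguously defines a linear functional $\tilde g$ on $Z$.

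Next I would check that $\tilde g$ is bounded on $Z$. For every $x\in X$ one estimates
$$
|\tilde g(T(x))|=|f(x)|\leq \|f\|_{X^*}\,\|x\|_X=\|f\|_{X^*}\,\|T(x)\|_Y,
$$
where the last equality is precisely the isometry hypothesis. Hence $\tilde g$ is a bounded linear functional on the subspace $Z$ with $\|\tilde g\|_{Z^*}\leq\|f\|_{X^*}$.

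The final step is to invoke the Hahn--Banach theorem to extend $\tilde g$ to a bounded linear functional $g\in Y^*$ defined on all of $Y$ (norm preservation is available but not needed here). Then for every $x\in X$ we get $(T^*g)(x)=g(T(x))=\tilde g(T(x))=f(x)$, so $T^*(g)=f$, and since $f\in X^*$ was arbitrary, $T^*$ is surjective. The only point requiring care is that the range $Z=T(X)$ need not be closed or equal to $Y$, so one cannot simply invert $T$; the isometry is used exactly to make $\tilde g$ well-defined and bounded on $Z$, after which Hahn--Banach does the essential work of passing from $Z$ to all of $Y$. I expect this extension step, rather than any computation, to be the conceptual crux.
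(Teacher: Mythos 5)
Your proposal is correct and follows essentially the same route as the paper's proof: use injectivity from the isometry to define the functional $\tilde g$ on $T(X)$, bound it via $\|x\|_X=\|T(x)\|_Y$, and extend by Hahn--Banach to obtain $g\in Y^*$ with $T^*g=f$. No gaps; your closing remark about why the isometry (rather than closedness of the range) is the key hypothesis matches the paper's intent exactly.
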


\begin{remark}
Observe that one does not need, in the above statement, any kind of completeness to be satisfied, neither by $X$ nor by $Y$.
\end{remark}

\begin{proof}
Fix $f\in X^*$. It is clear by assumption that $T$ is injective; hence the formula:
$$
\la g_0,T(x)\ra:=\la f, x\ra,
$$
defines a linear map $g_0\colon T(X)\to\R$ satisfying 
$$
|\la g_0, T(x)\ra|\leq \|f\| \|x\|_X=\|f\|\|T(x)\|_Y$$ 
for any $x\in X$.
The Hahn-Banach Theorem hence ensures the existence of $g\in Y^*$ verifying $g\upharpoonright T(X)=g_0$, meaning that one has:
$$
\la T^*g,x\ra:=\la g,T(x)\ra=\la g_0,T(x)\ra=\la f,x\ra,
$$
for any $x\in X$. This shows that $T^*g=f$ and establishes the surjectivity of $T^*$.
\end{proof}
We are now able to show that any Radon measure satisfying a weighted Frostman condition is the divergence of a vector field in $L^\infty_{1/w}$.
\begin{theorem}\label{thm.sol}
Let $w$ be a $1$-admissible weight.
If $\mu$ is a (nonnegative) Radon measure verifying condition (\ref{eq.frostman-mu}) above, then there exists $v\in L^\infty_{1/w}(\R^n,\R^n)$ such that, for any $\varphi\in\Lip_c(\R^n)$, one has:
$$
\la \diver v,\varphi\ra=\int_{\R^n}\varphi\, d\mu;
$$
in particular, $\diver v=\mu$ holds on $\R^n$, in the distributional sense.
\end{theorem}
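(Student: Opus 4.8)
The plan is to obtain $v$ by a duality argument, realizing $\mu$ as (minus) the adjoint of the gradient tested against a suitable $L^\infty_{1/w}$ field; the functional-analytic engine is Lemma~\ref{lem.boel}, while the quantitative input is the Frostman-type estimate of Proposition~\ref{prop.dual}. First I would equip $X:=\Lip_c(\R^n)$ with the norm $\|\varphi\|_X:=\|\nabla\varphi\|_{1,w}$. This is genuinely a norm: homogeneity and the triangle inequality are immediate, and $\|\nabla\varphi\|_{1,w}=0$ forces $\nabla\varphi=0$ a.e.\ (since $w>0$ a.e.), hence $\varphi$ constant, hence $\varphi=0$ by compact support. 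For $\varphi\in\Lip_c(\R^n)$ the field $\nabla\varphi$ is bounded with compact support and $w\in L^1_{\loc}(\R^n)$, so $\nabla\varphi\in L^1_w(\R^n,\R^n)$; thus $T:=\nabla\colon X\to L^1_w(\R^n,\R^n)$ is a well-defined linear map, and by the very choice of the norm on $X$ it is \emph{isometric}.

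Next I would identify the dual. Since $w\,dx$ is $\sigma$-finite and $w>0$ a.e., the multiplication $g\mapsto gw$ is an isometric isomorphism of $L^1_w(\R^n,\R^n)$ onto $L^1(\R^n,\R^n)$, whose dual is $L^\infty(\R^n,\R^n)$; unwinding the pairing shows that $\big(L^1_w(\R^n,\R^n)\big)^*=L^\infty_{1/w}(\R^n,\R^n)$, a field $v$ acting by $g\mapsto\int_{\R^n}g\cdot v$ with operator norm exactly $\|v\|_{\infty,1/w}$. Applying Lemma~\ref{lem.boel} to the isometric map $T$ then tells us that $T^*\colon L^\infty_{1/w}(\R^n,\R^n)\to X^*$ is \emph{surjective}; note that no completeness of $X$ is needed, which is exactly why the functional-analytic lemma was stated in that generality.

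The measure now enters. The linear functional $F(\varphi):=\int_{\R^n}\varphi\,d\mu$ is well defined on $X$, and Proposition~\ref{prop.dual} yields $|F(\varphi)|\le\int_{\R^n}|\varphi|\,d\mu\le M\|\nabla\varphi\|_{1,w}=M\|\varphi\|_X$, so $F\in X^*$. By the surjectivity of $T^*$ I choose $v_0\in L^\infty_{1/w}(\R^n,\R^n)$ with $T^*v_0=F$, that is $\int_{\R^n}v_0\cdot\nabla\varphi=\int_{\R^n}\varphi\,d\mu$ for every $\varphi\in\Lip_c(\R^n)$. Setting $v:=-v_0\in L^\infty_{1/w}(\R^n,\R^n)$ then gives $\la\diver v,\varphi\ra=-\int_{\R^n}v\cdot\nabla\varphi=\int_{\R^n}\varphi\,d\mu$, which is precisely $\diver v=\mu$ in the distributional sense.

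As for the main obstacle, the genuinely hard analytic work has already been packaged into Proposition~\ref{prop.dual} (through the boxing inequality and Miranda's coarea formula), so for this theorem the only delicate points are the clean identification $\big(L^1_w\big)^*=L^\infty_{1/w}$~---~which rests on the $\sigma$-finiteness of $w\,dx$ together with $w>0$ a.e.~---~and the careful bookkeeping of signs; everything else is a direct application of Lemma~\ref{lem.boel}.
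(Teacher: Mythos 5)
Your proposal is correct and follows essentially the same route as the paper: the same normed space $X=\Lip_c(\R^n)$ with $\|\varphi\|_X=\|\nabla\varphi\|_{1,w}$, the same isometry into $L^1_w(\R^n,\R^n)$, the identification $\big(L^1_w\big)^*=L^\infty_{1/w}$, surjectivity of the adjoint via Lemma~\ref{lem.boel}, and membership $\mu\in X^*$ via Proposition~\ref{prop.dual}. The only (immaterial) difference is that the paper builds the minus sign into $T$ ($Tu=-\nabla u$) while you take $T=\nabla$ and negate the vector field at the end.
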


\begin{proof}
It is inspired by \cite[Theorem 3.3]{PT}. Denote by $X$ the space $\Lip_c(\R^n)$ endowed with the norm $\|\varphi\|_X:=\|\nabla\varphi\|_{1,w}$, and define an operator
$$
T\colon X\to L^1_w(\R^n,\R^n), u\mapsto -\nabla u.
$$
Since $T$ is clearly isometric, its adjoint operator
$$
\diver\colon [L^1_w(\R^n,\R^n)]^*=L^\infty_{1/w}(\R^n,\R^n)\to X^*
$$
is surjective. Yet Proposition~\ref{prop.dual} ensures that we have $\mu\in X^*$. The proof is complete.
\end{proof}
\begin{remark}
To obtain the isometric isomorphism between $ [L^1_w(\R^n,\R^n)]^*$ and $L^\infty_{1/w}(\R^n,\R^n)$, it suffices to notice that given $f\in [L^1_w(\R^n,\R^n)]^*$, the formula $v\mapsto \la f,\frac vw \ra$ defines a bounded linear map on $L^1(\R^n,\R^n)$ with the same norm as $f$. Hence there exists $g\in L^\infty(\R^n,\R^n)$ with $\left\Vert g\right\Vert_{\infty}=\left\Vert f\right\Vert$ such that for any $u\in L^1_w(\R^n,\R^n)$ one has
$$
f(u)=\int_{\R^n} u \cdot gw.
$$
Yet the function $h:=gw$ belongs to $L^\infty_{1/w}(\R^n,\R^n)$ and $\left\Vert h\right\Vert_{L^{\infty}_{1/w}}=\left\Vert f\right\Vert$.  
\end{remark}

We now have a necessary condition on a compact set $S$ for it to be $L^\infty_{1/w}$-removable.
\begin{theorem}\label{thm.nec}
Assume that $w$ is a $1$-admissible weight that satisfies condition \eqref{eq.croiss}. 
If the compact set $S\subseteq\R^n$ is $L^\infty_{1/w}$-removable for $\diver v=0$, then $\calH^h_\infty(S)=0$.
\end{theorem}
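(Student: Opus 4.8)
The plan is to argue by contraposition: assuming $\calH^h_\infty(S)>0$, I will produce a vector field $v\in L^\infty_{1/w}(\R^n,\R^n)$ which is divergence-free \emph{outside} $S$ but whose distributional divergence is a nonzero measure, thereby witnessing that $S$ fails to be $L^\infty_{1/w}$-removable. The whole argument is an assembly of two results already at hand, namely the weighted Frostman lemma (Lemma~\ref{lem.frostman}) and the solvability statement (Theorem~\ref{thm.sol}); the genuinely analytic work has been front-loaded into those, so here the main task is simply to verify that their hypotheses line up and that the resulting field has support of divergence confined to $S$.

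First I would invoke Frostman. By hypothesis $w$ is $1$-admissible, hence doubling, and it satisfies the growth condition \eqref{eq.croiss}. Since $S$ is compact it is Borel, and we are assuming $\calH^h_\infty(S)>0$. Lemma~\ref{lem.frostman} (applied with $B=S$) therefore furnishes a \emph{nontrivial} measure $\mu\in\cM_+(S)$ satisfying the weighted Frostman estimate $\mu(B(x,r))\le C\,h(B(x,r))$ for every $x\in\R^n$ and every $r>0$. This is precisely condition \eqref{eq.frostman-mu} required by Theorem~\ref{thm.sol}, so that theorem yields a vector field $v\in L^\infty_{1/w}(\R^n,\R^n)$ with
$$
\la\diver v,\varphi\ra=\int_{\R^n}\varphi\,d\mu
$$
for every $\varphi\in\Lip_c(\R^n)$; in other words $\diver v=\mu$ on $\R^n$ in the distributional sense.

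Next I would check that $\diver v=0$ \emph{outside} $S$. Because $\mu$ is supported in $S$, any $\varphi\in\Lip_c(\R^n)$ with $\supp\varphi\cap S=\emptyset$ satisfies $\int_{\R^n}\varphi\,d\mu=0$, whence $\la\diver v,\varphi\ra=0$. Thus $v$ is a vector field in $L^\infty_{1/w}(\R^n,\R^n)$ solving $\diver v=0$ outside $S$. If $S$ were $L^\infty_{1/w}$-removable for $\diver v=0$, removability would force $\la\diver v,\varphi\ra=0$ for \emph{all} $\varphi\in\Lip_c(\R^n)$, i.e. $\int_{\R^n}\varphi\,d\mu=0$ for all such $\varphi$, which forces $\mu=0$ and contradicts the nontriviality of $\mu$ guaranteed by Frostman. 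Hence $S$ is not removable, establishing the contrapositive and therefore the theorem.

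The only point demanding a little care, rather than any deep obstacle, is the matching of regularity/support conditions between the two cited results: one must ensure the Frostman measure is genuinely nonzero (so that the final contradiction is real) and that its support inside $S$ is what makes $\diver v$ vanish on test functions away from $S$. Both are immediate from the statements of Lemma~\ref{lem.frostman} and Theorem~\ref{thm.sol}. Note also that the conclusion is stated with $\calH^h_\infty$ (not $\calH^h$); this is exactly what the contrapositive produces, since Frostman is driven by $\calH^h_\infty(S)>0$, and it is why this theorem pairs with Lemma~\ref{lem.equiv} to upgrade to a clean characterization in the bounded case.
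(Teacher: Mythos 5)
Your proposal is correct and coincides with the paper's own proof: both argue by contraposition, using Lemma~\ref{lem.frostman} to produce a nontrivial Frostman measure $\mu\in\cM_+(S)$ and Theorem~\ref{thm.sol} to realize it as $\diver v$ for some $v\in L^\infty_{1/w}(\R^n,\R^n)$, which then violates removability. No gaps; the hypothesis-matching you flag (doubling from $1$-admissibility, condition \eqref{eq.croiss}, nontriviality and support of $\mu$) is exactly what the paper relies on.
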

\begin{proof}
To show this, assume that one has $\calH^h_\infty(S)>0$. According to the above weighted version of Frostman's lemma (Lemma~\ref{lem.frostman}), there exists a nontrivial Radon measure $\mu$ on $\R^n$ supported in $S$ and satisfying:
$$
\mu(B(x,r))\leq C h(B(x,r)),
$$
for any $x\in\R^n$ and $r>0$. Since Theorem~\ref{thm.sol} ensures the existence of a vector field $v\in L^\infty_{1/w}(\R^n,\R^n)$ such that $\diver v=\mu$, we get $\diver v\neq 0$ while $\la\diver v,\varphi\ra=0$ holds in case $\varphi\in\Lip_c(\R^n)$ satisfies $\supp\varphi \cap S=\emptyset$. Hence $S$ cannot be $L^\infty_{1/w}$-removable for $\diver v=0$.
\end{proof}

According to Lemma~\ref{lem.equiv}, Theorems~\ref{thm.suff} and \ref{thm.nec} give a complete characterization of $L^\infty_{1/w}$-removable (compact) subsets of $\R^n$ for the equation $\diver v=0$, in case $w$ satisfies (\ref{eq.croiss}).
\begin{theorem}\label{thm.iff infty}
Assume that $w$ is a $1$-admissible weight and that $w$ satisfies condition \eqref{eq.croiss}.
Let $S\subseteq\R^n$ be compact. Under those assumptions, $S$ is $L^\infty_{1/w}$-removable for $\diver v=0$ if and only if one has $\calH^h(S)=0$.
\end{theorem}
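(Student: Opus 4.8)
The plan is to assemble the theorem from the three ingredients already in place, since each direction reduces to a previously proven statement once the hypotheses are matched up. The sufficiency of $\calH^h(S)=0$ is exactly Theorem~\ref{thm.suff}, which requires only that $w$ be $1$-admissible and does not even invoke the growth condition \eqref{eq.croiss}; so the ``if'' direction demands no work beyond citing it.

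For the ``only if'' direction, I would suppose $S$ is $L^\infty_{1/w}$-removable for $\diver v=0$ and proceed in two steps. First I would apply Theorem~\ref{thm.nec}: since $w$ is $1$-admissible and satisfies \eqref{eq.croiss}, removability of $S$ forces $\calH^h_\infty(S)=0$. The remaining task is to upgrade the vanishing of the content $\calH^h_\infty$ to the vanishing of the measure $\calH^h$ itself. For this I would invoke Lemma~\ref{lem.equiv}: because $S$ is compact it is in particular bounded, and the growth hypothesis \eqref{eq.croiss} applied at the single point $x=0$ is precisely condition \eqref{eq.croiss2}, so the lemma applies and yields $\calH^h(S)=0$ from $\calH^h_\infty(S)=0$.

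There is no genuinely hard step left, the substance having been discharged earlier: in the boxing-inequality-based necessary condition (Theorem~\ref{thm.nec}, which itself rests on the Frostman-type construction of Theorem~\ref{thm.sol} and the dual estimate of Proposition~\ref{prop.dual}) and in the truncation argument behind Theorem~\ref{thm.suff}. The only points requiring attention are bookkeeping ones: verifying that the hypotheses of each cited result are satisfied, and in particular noting that \eqref{eq.croiss} for all $x$ trivially specializes to \eqref{eq.croiss2} at $x=0$, and that compactness of $S$ supplies the boundedness needed to apply Lemma~\ref{lem.equiv}. Accordingly I expect the proof to be a short two-direction assembly rather than to present any new obstacle.
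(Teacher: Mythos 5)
Your assembly is exactly the paper's own argument: the authors state the theorem as an immediate consequence of Theorem~\ref{thm.suff} (sufficiency), Theorem~\ref{thm.nec} (removability implies $\calH^h_\infty(S)=0$), and Lemma~\ref{lem.equiv} (upgrading vanishing content to vanishing measure for the bounded set $S$, using \eqref{eq.croiss} at $x=0$). Your hypothesis bookkeeping is correct, so nothing is missing.
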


\begin{remark} \label{remcapacity}
The reader may wonder why we did not use the capacity theory to establish Theorem \ref{thm.iff infty}. Indeed, recall that in the classical unweighted case, $\calH^{n-1}$, Hausdoff measure of dimension $n-1$ and $1$-capacity $\capp_1$ (defined by \eqref{p capacity} below) have same zero sets, see \cite[Theorem 3, p. 193]{EG}. A corresponding result holds in the setting of metric spaces: by \cite[Theorem 3.5]{KINNUNEN-ET-AL} Hausdorff content of codimension $1$ and $1$-capacity are comparable for compact sets. The reason why we cannot use this result here is that, in \cite{KINNUNEN-ET-AL}, admissible functions for $1$-capacity belong to the Sobolev space {\it defined using weak upper gradients}, and it is not known if $|\grad u|$ is a $1$-minimal weak upper gradient of each locally Lipschitz function $u$, see \cite[Appendix A2]{BB}. 
\end{remark}
Before discussion on $L^p$-analogues of the previous results, let us mention an interesting example.
\begin{example}\label{ex.cantor}
Given a cube $Q$, we denote by $\sigma(Q)$ the length of its edges.
For $0<\lambda<1/2$, we let $\calE(Q,\lambda)$ stand for the collection
of $2^n$ cubes contained in $Q$ whose edges have length $\lambda\sigma(Q)$, arranged in such a way that each
cube of $\calE(Q,\lambda)$ has a common vertex with $Q$.

Fix a sequence $\lambda=(\lambda_k)_{k\geq 1}\subseteq (0,1/2)$.
Write $E_0:=[0,1]^n$ and $\calE_0=\{E_0\}$. For each $k\geq 1$, define inductively
$$\calE_k(\lambda_1,\dots,\lambda_k):=\cup\{\calE(Q,\lambda_k):Q\in\calE_{k-1}(\lambda_1,\dots,\lambda_{k-1})\}$$ and
$E_k(\lambda_1,\dots,\lambda_k):=\cup\calE_k(\lambda_1,\dots,\lambda_k)$ (or briefly
$\calE_k$ and $E_k$ when the underlying sequence is clear). In particular,
each cube in $\calE_k$ has edges of length $\sigma_k:=\lambda_1\dots\lambda_k$.
We finally let 
$$
E(\lambda) = \bigcap_{k= 1}^{\infty} E_k.
$$

Associated to $\lambda$, we choose a nondecreasing function $h\colon [0,\infty)\to [0,\infty)$ satisfying $h(0)=0$ and $h(\sigma_k)=2^{-kn}$ for each $k\geq 1$.
Recall that the Hausdorff $h$-measure of $E(\lambda)$ is positive
and finite (see \cite[Section 4.9]{MATTILA}).

If for example we fix a real number $0<s<n$ and take $\lambda_k=2^{-\frac ns}$ for each $k$, this yields $\sigma_k=2^{-\frac{kn}{s}}$ for each $k$, and we may take $h(t)=t^s$. Hence in this case the set $E:=E(\lambda)$ has Hausdorff dimension $s$ and is $s$-Ahlfors regular according to \cite[Theorem~8.3.2]{MaTy} (see also \cite{Mo} and \cite{Hu}). It hence follows from \cite[Theorem~7]{ACDT} that for any $0\leq\gamma <1$, the map $w_\gamma$ defined for a.e. $x\in\R^n$ by 
$$
w_\gamma(x):=\dist(x,C)^{\gamma(s-n)},
$$ 
is an $A_1$-weight.

Observing now that $E_k$ consists of $2^{kn}$ cubes of side length $\sigma_k$, compute, for any one of those cubes $Q$ and $B\supseteq Q$, the smallest ball containing $Q$ having the same center:
$$
\frac{1}{|B|}\int_B w_\gamma\leq A_\gamma \infess_B w_\gamma,
$$
where $A_\gamma>0$ is a constant associated with the $A_1$-property of $w_\gamma$. Since the set:
$$
\Big\{x\in B: \dist(x,E)\geq \frac 14 \left(\frac{\sigma_k}{2}-\sigma_{k+1}\right)\Big\}
$$
has positive Lebesgue measure, this yields:
$$
\frac{1}{|B|}\int_B w_\gamma\leq A_\gamma 4^{\gamma(n-s)} \left(\frac{\sigma_k}{2}-\sigma_{k+1}\right)^{\gamma(s-n)}.
$$
Since $2^{kn}$ of those balls suffice to cover $E_k$, we compute:
\begin{align*}
\calH^h(E_k)
&\leq 2^{kn} h(B)=2^{kn} \cdot \frac{\sqrt{2}}{\sqrt{n}\sigma_k} \int_B w_\gamma\\
&\leq C(n,\gamma, s) 2^{kn}  \sigma_k^{n-1} \left(\frac{\sigma_k}{2}-\sigma_{k+1}\right)^{\gamma(s-n)}\\
&= C(n,\gamma,s) 2^{kn} \sigma_k^{n-1+\gamma(s-n)} \left(\frac 12 -2^{-\frac ns}\right)^{\gamma(n-s)}\\
&=C'(n,\gamma,s)2^{-k\left[(1-\gamma)\frac{n^2}{s}-\frac ns +(\gamma -1)n\right]}.
\end{align*}
The last expression tends to zero for $k\to\infty$, provided that one has:
\begin{equation}\label{small s}
s<n-\frac{1}{1-\gamma}.
\end{equation}
This yields $\calH^h(E)=0$ if inequality \eqref{small s} is satisfied. In particular, the set $E$ is then $L^\infty_{1/w_\gamma}$-removable for the equation $\diver v=0$ while we have $w_\gamma=+\infty$ on $E$.
\end{example}

\section{The case of weighted $L^p$ vector fields}\label{sec.lp}
In this section, we study the removability question for the equation $\diver v=0$ for $L^p_{1/w}$-vector fields, that is measurable functions $v:\R^n\rightarrow \R^n$ such that 
$$
\left\Vert v\right\Vert_{L^p_{1/w}}:=\left(\int_{\R^n} \left\vert v(x)\right\vert^p\frac 1{w(x)}dx\right)^{1/p}<+\infty,
$$
$1<p<+\infty$.
In Theorems \ref{cap implies rem} and \ref{rem implies cap}, we give a characterization for removability by showing that, under suitable assumptions on the weight $w$, a compact set $S$ is $L^p_{1/w}$-removable for $\diver v=0$ if and only if $Cap_{p'}^{w^{p'-1}}(S)=0$. Before that, we recall the definition of weighted Sobolev spaces, different capacities and prove some lemmas. 
In the whole section, $p$ and $p'$ are conjugate exponents satisfying $1/p+1/p'=1$.

\begin{remark}
It follows from the H\"older inequality that if $w\in L^{p'-1}_{\loc}(\R^n)$, then the distributional divergence of $v\in L^p_{1/w}(\R^n,\R^n)$ is well defined by the formula:
$$
\la\diver v,\varphi\ra:=-\int_{\R^n} v\cdot\nabla\varphi,
$$
for $\varphi\in\Lip_c(\R^n)$.
\end{remark}

Removability in the $L^p$-case is defined similarly as for $L^{\infty}$-vector fields.
\begin{definition}
Let $S\subseteq\R^n$ be compact and $1<p<\infty$. Let $w$ be a weight.
The set $S$ is \emph{$L^p_{w}$-removable with for the equation $\diver v=0$} (in the distributional sense) if for any 
$v\in L^p_{w}(\R^n,\R^n)$, the validity of the equality
\begin{equation}\label{eq.def-eff p}
\la\diver v,\varphi\ra=0
\end{equation}
for any $\varphi\in\Lip_c(\R^n)$ with $\supp \varphi \cap S=\emptyset$ implies that (\ref{eq.def-eff p}) also holds for any $\varphi\in \Lip_c(\R^n)$.
\end{definition}
\begin{remark}\label{rmq.def-eq2}
Given $1<p<\infty$ and an $A_p$ weight $w$, we can again (as in Remark~\ref{rmq.def-eq}) rely on \cite[Corollary~2.1.5]{Tu} (see \cite[Lemma~2.4]{Mi} for a proof in this case) to claim that the above definition remains unchanged in case $\Lip_c(\R^n)$ is replaced by $C^\infty_c(\R^n)$.
\end{remark}

\subsection{Weighted Sobolev spaces}
There are two ways to define weighted Sobolev spaces: using distributional derivatives or the closure of smooth compactly supported functions in the weighted Sobolev norm. 
The space $W^{1,p}_w(\R^n)$ consists of functions $u\in L^p_w(\R^n)$ whose distributional derivatives of order one belong to $L^p_w(\R^n)$. The space is equipped with norm
\[
\|u\|_{W^{1,p}_w(\R^n)}:=\|u\|_{p,w}+\|\grad u\|_{p,w},
\]
where $\grad u$ is the distributional gradient of $u$.
The space ${H^{1,p}_w(\R^n)}$ is the closure of $C^{\infty}_c(\R^n)$ under the norm $\|\cdot\|_{W^{1,p}_w(\R^n)}$.
For $A_p$-weights, $1<p<\infty$, $H$ and $W$ definitions of weighted Sobolev spaces give the same space by \cite[Theorem 2.5]{Ki}.

\subsection{Potentials and capacities}

We need three different capacities: Sobolev capacity and the weighted versions of Riesz and Bessel capacity.
The capacity theory in weighted Sobolev spaces has been developed to study nonlinear potential theory, see \cite{HeKiMa}, \cite{A2} and \cite{Tu}. 
For the properties of Riesz and Bessel potentials, we refer, in the classical case to \cite{AH}, \cite[Chapter V]{St} and \cite{Z} and in the weighted case to \cite{A2}, \cite{Ai}, \cite{Tu}.
As in the unweighted Euclidean space, Riesz and Bessel potentials and the corresponding capacities are closely related to Sobolev spaces.

\begin{definition}\label{Definition: Sobo capacity}
Let $w$ be a weight and let $1\le p<\infty$. The {\em weighted Sobolev $p$-capacity} of a set $E$ is 
\[
\Capa_{p}^{w}(E)
:=\inf\left\{\int_{\rn}|\ph|^p w + \int_{\rn}|\grad{\ph}|^p w\right\},
\]
where the infimum is taken over all functions $\ph\in\cA(E)$, where we let:
\[
\cA(E):=\big\{\ph\in C^\infty_c(\rn):\ph\ge1 \text{ in a neighbourhood of } E \big\},
\]
where as usual $C^\infty_c(\rn)$ denotes the space of all compactly supported smooth functions in $\R^n$.
\end{definition}

\begin{remark} \label{testfunction}
Assume that $E$ is compact and that $w\in A_p$. We claim that
$$
\Capa_{p}^{w}(E)
=\inf\left\{\int_{\rn}|\ph|^p w + \int_{\rn}|\grad{\ph}|^p w\right\},
$$
where the infimum is taken over all functions $\ph\in\cB(E)$, defined by
\[
\cB(E):=\big\{\ph\in C^\infty_c(\rn): 0\leq \ph\leq 1\text { in } \R^n \text{ and }\ph=1 \text{ in a neighbourhood of } E \big\}.
\]
Indeed, let $\ph\in \cA(E)$. Define $\psi:=\max(0,\inf(\ph,1))$. The function $\psi$ belongs to $\Lip_c(\R^n)$ with $\left\vert \nabla \psi\right\vert\leq \left\vert \nabla \varphi\right\vert$ almost everywhere, satisfies $0\leq \psi\leq 1$ in $\R^n$ and $\psi=1$ in a neighbourhood of $E$ and
$$
\int_{\rn}|\psi|^p w + \int_{\rn}|\grad{\psi}|^p w\leq \int_{\rn}|\ph|^p w + \int_{\rn}|\grad{\ph}|^p w.
$$
According to \cite[Corollary 2.1.5]{Tu} (see also \cite[Lemma~2.4]{Mi} for the proof in our case where $p>1$) and since $\psi$ is compactly supported, for all $\varepsilon>0$, there exists a function $\psi_{\varepsilon}$ belonging to $C^{\infty}_c(\R^n)$ (obtained by mollification of $\psi$ with a suitable convolution kernel), satisfying $0\leq\psi_{\varepsilon}\leq 1$ in $\R^n$ and $\psi_{\varepsilon}=1$ in a neighbourhood of $E$, such that 
$$
\|\psi_\epsilon\|_{p,w}^p\leq \epsilon+\|\psi\|_{p,w}^p\quad\text{and}\quad\|\nabla\psi_\epsilon\|_{p,w}^p\leq \epsilon+\|\nabla\psi\|_{p,w}^p,
$$
which proves the claim, for we hence get:
$$
\int_{\rn}|\psi_\epsilon|^p w + \int_{\rn}|\grad{\psi_\epsilon}|^p w\leq 2\epsilon+ \int_{\rn}|\ph|^p w + \int_{\rn}|\grad{\ph}|^p w.
$$
\end{remark}
\begin{remark}
Assume that $1<p<n$ is a real number and that $w$ is a $p$-admissible weight.
According to \cite[Corollary~2.39]{HeKiMa}, the weighted Sobolev capacity $\Capa_{p}^w$ has the same zero sets as the capacity $\capa_p^w$ defined by
\begin{equation}\label{p capacity}
\capa_{p}^{w}(E)
:=\inf\left\{\int_{\rn}|\grad{\ph}|^p w\right\},
\end{equation}
where the infimum is taken over all functions $\ph\in \cA(E)$.
\end{remark}

\subsubsection *{Riesz potential and capacity} 
Let $0<R\leq\infty$. The ($R$-truncated) Riesz potential (of order $1$) 
of a nonnegative, locally integrable function $u$ is the function $I_{1,R} u\colon\rn\to[0,\infty]$ given by 
\[
I_{1,R} u(x):=\int_{B(x,R)}\frac{u(y)}{|x-y|^{n-1}}\,dy,
\]
and the ($R$-truncated) Riesz potential of a measure $\mu\in\cM_+(\rn)$ is $I_{1,R}\mu\colon\rn\to[0,\infty]$,  given by
\[
I_{1,R}\mu(x):=\int_{B(x,R)}\frac1{|x-y|^{n-1}}\,d\mu(y).
\]
Taking $R=\infty$ gives the usual Riesz potential as defined in \cite{AH} and \cite{Tu} (note that usually the integrals above are multiplied by a constant $\gamma(n)$; since we are not interested in the exact values of potentials and capacities, we omit the constant).

\begin{definition}\label{Riesz cap}
Let $w$ be a weight and let $1<p<\infty$.  
For $0<R\leq\infty$, we define the {\em ($R$-truncated) weighted Riesz capacity }of a set $E\subset\rn$ by
\[
R^w_{1,p;R}(E)
:=\inf\Big\{\|f\|^p_{p,w}:f\ge0,\, I_{1,R}f\ge1\text{ on }E\Big\},
\] 
and we let $R^w_{1,p}(E):=R^w_{1,p;\infty}(E)$.
\end{definition}

If $I_{1,R}\mu\in L^p_w(\rn)$, then we say that measure $\mu$ has {\em a finite weighted $(p,w;R)$-energy} (or finite weighted $(p,w)$-energy in case $R=\infty$).

\subsubsection *{Bessel potential and capacity} 
The Bessel kernel $G_1$ is the tempered distribution whose Fourier transform is $\hat G_1(x)=(1+|x|^2)^{-1/2}$. 
It is actually a function with the same singularity at $0$ as the Riesz kernel $I_1(x)=|x|^{1-n}$, but has more rapid decay at infinity. 
Similarly as for Riesz potentials, the Bessel potential of a measurable function $u$ is convolution $\cG_1u=G_1*u$ and 
of measure $\mu\in\cM_+(\rn)$, $\cG_1\mu=G_1*\mu$. 

Riesz potentials are easier to handle but Bessel potentials have better mapping properties. 
The most important properties for us are the inequality
\begin{equation}\label{Riesz bigger}
0<G_1(x)<CI_1(x) 
\end{equation}
for all $x\in\re$, the fact that
$$
G_1(x)=I_1(x) + o(I_1(x))
$$
as $x\to 0$ (for these two facts, see \cite[Section 3.1.2]{Tu}),
and the following Theorem from \cite[Theorem 3.3]{Mi}, \cite{Ni} which says that the Bessel potential space equals the weighted Sobolev space for $A_p$-weights.

\begin{theorem}\label{bessel equals sobo}
Let $1<p<\infty$. Let $w$ be an $A_p$-weight. A function $u$ belongs to $H^{1,p}_w(\rn)$ if and only if there is a function $f\in L^p_w(\rn)$ such that $u=\cG_1f$. Moreover, there is a constant $C=C(n,p,w)>0$ such that 
\[
\frac1C\|f\|_{p,w}\le\|u\|_{H^{1,p}_w(\rn)}\le C\|f\|_{p,w}.
\]
\end{theorem}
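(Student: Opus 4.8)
The plan is to prove this as a weighted version of the classical identification of Bessel potential spaces with Sobolev spaces (as in Stein's book or Adams--Hedberg), the two analytic inputs being: (a) the boundedness of the Hardy--Littlewood maximal operator $M$ on $L^p_w(\rn)$ for $w\in A_p$ and $1<p<\infty$ (Muckenhoupt's theorem, \cite{Mu}); and (b) the weighted H\"ormander--Mikhlin multiplier theorem, asserting that a Fourier multiplier $m$ satisfying $|\xi|^{|\alpha|}|\partial^\alpha m(\xi)|\le C_\alpha$ for all multi-indices $\alpha$ up to order $\lfloor n/2\rfloor+1$ yields an operator bounded on $L^p_w(\rn)$ whenever $w\in A_p$. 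The whole argument rests on the operator identity $\cG_1=(I-\Delta)^{-1/2}$: on the Fourier side $\cG_1$ has multiplier $(1+|\xi|^2)^{-1/2}$, while $\partial_j$ has multiplier $i\xi_j$ (with the paper's normalization).

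First I would treat the ``if'' direction: given $f\in L^p_w(\rn)$ and $u=\cG_1 f=G_1\ast f$, I claim $u\in H^{1,p}_w(\rn)$ with $\|u\|_{H^{1,p}_w}\le C\|f\|_{p,w}$. Since $G_1$ is a nonnegative, radial, radially nonincreasing $L^1$-function, one has the pointwise domination $|\cG_1 f|\le \cG_1|f|\le \|G_1\|_{L^1}\,Mf$, whence $\|u\|_{p,w}\le C\|f\|_{p,w}$ by (a). For the first-order derivatives, $\partial_j u=(\partial_j G_1)\ast f$ is the multiplier operator with symbol $m_j(\xi)=i\xi_j(1+|\xi|^2)^{-1/2}$; since $|m_j|\le 1$ and $|\partial^\alpha m_j(\xi)|\le C_\alpha(1+|\xi|)^{-|\alpha|}$, the symbol $m_j$ satisfies the Mikhlin condition, so (b) gives $\|\partial_j u\|_{p,w}\le C\|f\|_{p,w}$. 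Thus $u\in W^{1,p}_w(\rn)$ with the stated bound, and since $w\in A_p$ the spaces $H$ and $W$ coincide by \cite[Theorem 2.5]{Ki}, so $u\in H^{1,p}_w(\rn)$.

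For the ``only if'' direction, the key is the elementary multiplier decomposition
\[
(1+|\xi|^2)^{1/2}=\frac{1}{(1+|\xi|^2)^{1/2}}+\sum_{j=1}^n\frac{-i\xi_j}{(1+|\xi|^2)^{1/2}}\,(i\xi_j),
\]
which corresponds to the operator identity $(I-\Delta)^{1/2}=\cG_1+\sum_j S_j\partial_j$, where each $S_j$ has multiplier $-i\xi_j(1+|\xi|^2)^{-1/2}$ (again a Mikhlin multiplier). Given $u\in H^{1,p}_w(\rn)$, set $f:=(I-\Delta)^{1/2}u=\cG_1 u+\sum_j S_j(\partial_j u)$. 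Each summand lies in $L^p_w(\rn)$ by the boundedness of $\cG_1$ and of the $S_j$ established above, together with $u,\partial_j u\in L^p_w(\rn)$, so $\|f\|_{p,w}\le C\|u\|_{H^{1,p}_w}$. Finally $\cG_1 f=\cG_1(I-\Delta)^{1/2}u=u$ because the product of the multipliers is $1$; to make this rigorous I would verify it for $u\in C^\infty_c(\rn)$ and extend by the continuity of $\cG_1$ and $(I-\Delta)^{1/2}$ on the relevant spaces, using the density of $C^\infty_c(\rn)$ in $H^{1,p}_w(\rn)$. Combining both directions yields the norm equivalence with $C=C(n,p,w)$.

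I expect the main obstacle to be item (b): rather than the formal multiplier computations, the delicate point is guaranteeing that the operators $\partial_j\cG_1$ and $S_j$ are bounded on the \emph{weighted} space $L^p_w$. This is exactly where the $A_p$ hypothesis is indispensable and where one must invoke the weighted H\"ormander--Mikhlin theorem (or, equivalently, represent these operators through the Riesz transforms, whose $L^p_w$-boundedness for $A_p$ weights is the Hunt--Muckenhoupt--Wheeden theorem). A secondary technical point is the careful justification of the composition $\cG_1(I-\Delta)^{1/2}=\mathrm{Id}$ and of the membership $u\in H^{1,p}_w$ (rather than merely $W^{1,p}_w$), both of which are handled by density of smooth functions once the coincidence $H^{1,p}_w=W^{1,p}_w$ for $A_p$ weights is available.
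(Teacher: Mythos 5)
This statement is not proved in the paper at all: it is imported verbatim from the literature, with the authors citing \cite[Theorem 3.3]{Mi} and \cite{Ni} (see also \cite[Section~3]{Tu}), so there is no in-paper argument to compare against. Your proof is essentially the standard one, and in fact it is close in spirit to Miller's original argument: the ``if'' direction via the pointwise domination of convolution with the radial decreasing $L^1$ kernel $G_1$ by the Hardy--Littlewood maximal function (plus Muckenhoupt's theorem) together with weighted multiplier bounds for $\partial_j\cG_1$, and the ``only if'' direction via the multiplier identity $(1+|\xi|^2)^{1/2}=(1+|\xi|^2)^{-1/2}+\sum_j\bigl(-i\xi_j(1+|\xi|^2)^{-1/2}\bigr)(i\xi_j)$, which exhibits $f=(I-\Delta)^{1/2}u$ as a sum of $L^p_w$-bounded operators applied to $u$ and $\grad u$. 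The decomposition is algebraically correct, the density/composition issues are correctly identified and handled, and the appeal to \cite[Theorem 2.5]{Ki} for $H^{1,p}_w=W^{1,p}_w$ matches what the paper itself uses. The one imprecision worth fixing is your statement of the weighted H\"ormander--Mikhlin theorem: requiring control of only $\lfloor n/2\rfloor+1$ derivatives of the symbol does \emph{not} suffice to get boundedness on $L^p_w$ for \emph{all} $w\in A_p$; the Kurtz--Wheeden theorem yields the full $A_p$ range only under the Mikhlin-type condition up to order $n$ (with fewer derivatives one obtains a restricted class of weights). This does not damage your argument, since the symbols $\pm i\xi_j(1+|\xi|^2)^{-1/2}$ are smooth with the decay $|\partial^\alpha m(\xi)|\le C_\alpha(1+|\xi|)^{-|\alpha|}$ to every order, so the stronger hypothesis is satisfied; but the general theorem should be quoted in its correct form (or, as you suggest, one can route everything through the Riesz transforms and the Hunt--Muckenhoupt--Wheeden theorem, writing $\partial_j G_1$ as a Calder\'on--Zygmund kernel plus an integrable remainder).
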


\begin{definition}\label{bessel cap}
Let $1<p<\infty$. Let $w$ be a weight. {\em The weighted Bessel capacity }of a set $E\subset\rn$ is
\[
B^w_{1,p}(E)
=\inf\Big\{\|f\|^p_{p,w}:f\ge0,\, \cG_1f\ge1\text{ on }E\Big\}.
\] 
\end{definition}

\subsection{Connection between different capacities}
In the proof of Theorem \ref{rem implies cap}, we need the property that weighted Sobolev, Bessel and Riesz capacities have same zero sets. 
Connection between Sobolev and Bessel capacities is much stronger; Theorem \ref{bessel equals sobo} implies that the weighted Bessel and Sobolev capacities for an $A_p$-weight are comparable. For a proof, see \cite[Theorem 3.5.2]{Tu}.

\begin{theorem}\label{bessel and sobo cap}
Let $1<p<\infty$. Let $w$ be an $A_p$-weight. There is a constant $C>0$ such that 
\[
\frac1C \Capa_p^w(K)\le B^w_{1,p}(K)\le C\,\Capa_p^w(K)
\]
for all compact sets $K\subset\rn$.
\end{theorem}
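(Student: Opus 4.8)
The plan is to transfer competitors between the two capacities using the identification of $H^{1,p}_w(\rn)$ with Bessel potentials supplied by Theorem~\ref{bessel equals sobo}, together with the equality $H^{1,p}_w(\rn)=W^{1,p}_w(\rn)$ for $A_p$-weights (\cite[Theorem~2.5]{Ki}) and the elementary comparison $\|\ph\|_{p,w}^p+\|\grad\ph\|_{p,w}^p\leq\|\ph\|_{W^{1,p}_w(\rn)}^p\leq 2^{p-1}\big(\|\ph\|_{p,w}^p+\|\grad\ph\|_{p,w}^p\big)$, valid for $p\geq1$. The last display shows that, up to the constant $2^{p-1}$, the quantity $\Capa_p^w(K)$ coincides with the infimum of $\|\ph\|_{W^{1,p}_w(\rn)}^p$ taken over $\ph\in\cA(K)$, so the task reduces to comparing this latter infimum with $B^w_{1,p}(K)$.

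For the bound $B^w_{1,p}(K)\leq C\,\Capa_p^w(K)$, I would start from an arbitrary $\ph\in\cA(K)$. Since $\ph\in C^\infty_c(\rn)\subseteq H^{1,p}_w(\rn)$, Theorem~\ref{bessel equals sobo} furnishes $f\in L^p_w(\rn)$ with $\ph=\cG_1 f$ and $\|f\|_{p,w}\leq C\|\ph\|_{W^{1,p}_w(\rn)}$. The function $f$ need not be nonnegative, but since $G_1>0$ (see \eqref{Riesz bigger}) one has $\cG_1|f|\geq|\cG_1 f|=|\ph|\geq 1$ on $K$, while $\||f|\|_{p,w}=\|f\|_{p,w}$; hence $|f|$ is admissible for $B^w_{1,p}(K)$, giving $B^w_{1,p}(K)\leq\|f\|_{p,w}^p\leq C^p\|\ph\|_{W^{1,p}_w(\rn)}^p$. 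Combining this with the norm comparison and taking the infimum over $\ph\in\cA(K)$ yields the claimed inequality.

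For the reverse inequality $\Capa_p^w(K)\leq C\,B^w_{1,p}(K)$, fix $f\geq 0$ with $\cG_1 f\geq 1$ on $K$ and $\|f\|_{p,w}^p$ within $\epsilon$ of $B^w_{1,p}(K)$, and set $u:=\cG_1 f$. By Theorem~\ref{bessel equals sobo}, $u\in H^{1,p}_w(\rn)=W^{1,p}_w(\rn)$ with $\|u\|_{W^{1,p}_w(\rn)}\leq C\|f\|_{p,w}$. Now $u$ is not yet admissible for $\Capa_p^w(K)$: it is only lower semicontinuous (being the integral of the nonnegative kernel $G_1$ against $f\geq0$), it satisfies $u\geq 1$ only on $K$ rather than on a neighbourhood, and it is neither smooth nor compactly supported. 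I would correct this in three steps: (i) replace $u$ by $\psi:=\min\big((1+\epsilon)u,1\big)$, which equals $1$ on the open set $\{u>1/(1+\epsilon)\}\supseteq K$ by lower semicontinuity, satisfies $0\leq\psi\leq1$ and $|\grad\psi|\leq(1+\epsilon)|\grad u|$, so that $\|\psi\|_{W^{1,p}_w(\rn)}\leq(1+\epsilon)\|u\|_{W^{1,p}_w(\rn)}$; (ii) multiply $\psi$ by a cutoff $\eta\in C^\infty_c(\rn)$ equal to $1$ on a neighbourhood of $K$, obtaining a compactly supported $\eta\psi$ still equal to $1$ near $K$, whose norm is controlled by $\|\psi\|_{W^{1,p}_w(\rn)}$ and $\|\psi\|_{p,w}$ through the product rule; (iii) mollify $\eta\psi$ by means of \cite[Corollary~2.1.5]{Tu} (see also \cite[Lemma~2.4]{Mi}), exactly as in Remark~\ref{testfunction}, to produce $\ph\in\cB(K)\subseteq\cA(K)$ with $\|\ph\|_{W^{1,p}_w(\rn)}^p$ as close as desired to $\|\eta\psi\|_{W^{1,p}_w(\rn)}^p$. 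Tracking the constants and letting $\epsilon\to0$ gives $\Capa_p^w(K)\leq C\|f\|_{p,w}^p$, and the infimum over admissible $f$ finishes the argument.

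The main obstacle is the regularization chain of the third step: turning the Bessel potential $u=\cG_1 f$, which a priori only satisfies $u\geq1$ pointwise on $K$, into a genuine smooth, compactly supported competitor equal to $1$ on an \emph{open} neighbourhood of $K$, all while keeping the weighted Sobolev norm under control. The two essential ingredients are the lower semicontinuity of $u$ (which, after the dilation by $1+\epsilon$, upgrades the pointwise bound on $K$ to a bound on a neighbourhood) and the $A_p$-mollification result of \cite[Corollary~2.1.5]{Tu}, which is exactly what legitimises passing to a smooth function for $A_p$-weights. The nonnegativity substitution $f\rightsquigarrow|f|$ in the first direction, though minor, is also essential and relies solely on the positivity $G_1>0$.
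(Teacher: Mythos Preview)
Your argument is correct and follows precisely the route the paper indicates: the paper does not give a proof of this theorem but merely observes that it is a consequence of Theorem~\ref{bessel equals sobo} and refers to \cite[Theorem~3.5.2]{Tu}, and your proof is exactly a detailed unpacking of that implication. The one point worth making explicit is that in step~(ii) the cutoff $\eta$ can always be chosen with $\|\nabla\eta\|_\infty\leq 1$, so that the product-rule constant is uniform in $K$; with this observation your chain (i)--(iii) goes through as written.
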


Concerning Riesz capacities, we have, by \eqref{Riesz bigger}, that
$$
R^w_{1,p}(E)\le C B^w_{1,p}(E)
$$
for all sets $E\subset\rn$. 
In the unweighted case, Bessel and Riesz capacities have same sets of finite capacity by \cite[Theorem 1]{A1}. 
This Theorem says that if $p<n$, then 
\[
B_{1,p}(E)\le C\big(R_{1,p}(E)+R_{1,p}(E)^{n/(n-p)}\big).
\]
In the weighted case, Bessel and Riesz capacities are locally comparable if the $A_p$-weight satisfies an additional integrability condition, see \cite[Lemma 20]{Ai}. 
The following lemma (resulting from \cite[Theorem~3.3.7 and Lemma~3.3.8]{Tu}) together with Theorem \ref{bessel and sobo cap} implies that for $A_p$-weights, weighted Sobolev, Bessel and (localized) Riesz capacities are comparable.

\begin{lemma}\label{zero sets}
Let $1<p<\infty$ and $R>0$ be real numbers. Let $w$ be an $A_p$-weight. 
 There is a constant $C_R>0$ such that 
\[
\frac{1}{C_R} R^w_{1,p;R}(K)\le B^w_{1,p}(K)\le C_R\,R^w_{1,p;R}(K),
\]
for all compact sets $K\subset\rn$.
\end{lemma}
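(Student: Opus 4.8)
The plan is to prove the two inequalities of the lemma by inserting an auxiliary truncation radius $\rho>0$ chosen so small that the Bessel and Riesz kernels are comparable on $B(0,\rho)$, and to factor the argument through two comparisons: a \emph{small-radius} comparison of $B^w_{1,p}$ with $R^w_{1,p;\rho}$, and a \emph{radius} comparison of $R^w_{1,p;\rho}$ with $R^w_{1,p;R}$. The kernel facts I would use are all recorded before the statement: since $G_1(x)=I_1(x)+o(I_1(x))$ as $x\to0$, one may fix $\rho>0$ with $\tfrac12 I_1\le G_1\le 2I_1$ on $B(0,\rho)$; moreover $G_1<CI_1$ everywhere by \eqref{Riesz bigger}, and $G_1$ is radially decreasing with exponential decay at infinity. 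Throughout, the decisive hypothesis is $w\in A_p$, which by Muckenhoupt's theorem makes the Hardy--Littlewood maximal operator $M$ bounded on $L^p_w(\rn)$.

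First I would record, for $f\ge0$ and every $x\in\rn$, the pointwise sandwich
\[
\tfrac12\,I_{1,\rho}f(x)\le \cG_1f(x)\le 2\,I_{1,\rho}f(x)+C_\rho\,Mf(x),
\]
obtained by splitting the convolution defining $\cG_1f$ at $|x-y|=\rho$: the near part is controlled by the two-sided kernel bound on $B(0,\rho)$, and the far part is estimated by a dyadic decomposition of $\{|x-y|\ge\rho\}$, the resulting series $\sum_j G_1(2^j\rho)(2^{j+1}\rho)^n$ converging because of the exponential decay of $G_1$. The left inequality shows that any admissible $f$ for $R^w_{1,p;\rho}(K)$ gives, after scaling by $2$, an admissible function for $B^w_{1,p}(K)$, whence $B^w_{1,p}(K)\le 2^pR^w_{1,p;\rho}(K)$. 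For the converse, given $f$ with $\cG_1f\ge1$ on $K$ the right inequality yields $2I_{1,\rho}f+C_\rho Mf\ge1$ on $K$, so at each point of $K$ either $I_{1,\rho}f\ge\tfrac14$ or $Mf\ge(2C_\rho)^{-1}$; the first portion is covered by $4f$, and the second must be covered by a competitor of controlled $L^p_w$-norm, which is where the $A_p$ property is spent, and yields $R^w_{1,p;\rho}(K)\le C\,B^w_{1,p}(K)$.

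The crux is the radius comparison: for compact $K$ and $0<\rho\le R<\infty$,
\[
\tfrac1{C}\,R^w_{1,p;\rho}(K)\le R^w_{1,p;R}(K)\le R^w_{1,p;\rho}(K),
\]
with $C=C(n,p,w,\rho,R)$ \emph{independent of} $K$. The right-hand inequality is immediate from $I_{1,\rho}\le I_{1,R}$. For the left-hand one, given $f\ge0$ with $I_{1,R}f\ge1$ on $K$, I would estimate the annular tail
\[
I_{1,R}f(x)-I_{1,\rho}f(x)=\int_{\rho\le|x-y|<R}|x-y|^{1-n}f(y)\,dy\le C(\rho,R)\,Mf(x)
\]
by a dyadic splitting over the shells $\{2^j\rho\le|x-y|<2^{j+1}\rho\}$, each shell contributing $\lesssim 2^j\rho\,Mf(x)$; since the order-one Riesz kernel does not decay, this sum is genuinely of size $R$ rather than small, so the tail is merely bounded by (a multiple of) $Mf$. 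One then argues as above: where $I_{1,\rho}f\ge\tfrac12$ the function $2f$ suffices, while the remaining part of $K$, lying in $\{Mf\gtrsim 1\}$, is covered by a Hedberg-type competitor built from Vitali balls on which $f$ has large average, its $L^p_w$-norm being controlled by $\|Mf\|_{p,w}\lesssim\|f\|_{p,w}$ via Muckenhoupt.

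Combining the two comparisons gives $B^w_{1,p}(K)\le 2^pR^w_{1,p;\rho}(K)\le C\,R^w_{1,p;R}(K)$ and, in the other direction, $R^w_{1,p;R}(K)\le R^w_{1,p;\rho}(K)\le C\,B^w_{1,p}(K)$, which is the assertion. The main obstacle is precisely the radius comparison: because the Riesz kernel $I_1$ has a non-decaying tail, the annular part of the potential cannot be absorbed by smallness and must instead be routed through the maximal function and reabsorbed using $w\in A_p$; keeping the comparison constant uniform over all compact sets $K$ is the delicate point, and this is exactly the content of \cite[Lemma~3.3.8]{Tu}, the small-radius comparison being \cite[Theorem~3.3.7]{Tu}.
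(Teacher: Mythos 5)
The paper does not actually prove this lemma; it simply cites \cite{Tu} (Theorem~3.3.7 and Lemma~3.3.8), which are precisely the two comparisons --- Bessel versus small-radius truncated Riesz capacity, and truncated Riesz capacities at two different radii --- through which your argument factors, so your proposal is essentially a correct unpacking of the paper's own route. The kernel sandwich, the dyadic tail estimates, and the use of $w\in A_p$ via the maximal function are all in the right places; only the Vitali/Hedberg competitor for the set $\{Mf\gtrsim 1\}$ is left at the level of assertion, which is acceptable given that the paper delegates the entire proof to Turesson.
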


\begin{remark}\label{measure with finite energy}
We could use the general theory for $L^q$-capacities and dual definitions of capacity from \cite[Chapter 2]{AH} for weighted Riesz capacities, see also \cite{A2} and \cite[Chapter 3]{Tu}. We do not repeat the details of the theory because  the only property we need is that for each (compact) set $S\subseteq B(0,R)\subseteq \rn$ of positive weighted $(3R)$-localized Riesz capacity, there is a nonnegative non-zero measure supported in $S$ with finite energy.

Indeed, by \cite[Theorem 2.5.3]{AH}, for each compact set $K\subset\rn$, there is a measure $\mu_K\in\cM_+(K)$ such that
\begin{equation}\label{general cap}
\Capa_{g,q}(K)=\mu_K(K)=\int_{\rn}\Big(\int_{\rn}g(x,y)\,d\mu_K(x)\Big)^{q'}\,d\nu(y)
\end{equation}
where $1<q<\infty$, $1/q+1/q'=1$, capacity $\Capa_{g,q}$ is defined using a kernel function $g(x,y)$ and $\nu$ is a positive measure in $\rn$. 

Let $S\subseteq B(0,R)$ be a compact set with $Cap_{p'}^{w^{p'-1}}(S)>0$. A non-zero measure supported in $S$ with finite $(p,1/w;3R)$ energy is obtained using \eqref{general cap} and selecting $q'=p$, because for
\[
d\nu=w^{p'-1}\,dy\quad\text{ and }\quad g(x,y)=\frac{\ch{B(0,3R)}(x-y)}{|x-y|^{n-1}}w(y)^{1-p'},
\]
we have
\begin{eqnarray*}
\lefteqn{\int_{\rn}\Big(\int_{\rn}g(x,y)\,d\mu(x)\Big)^{p}\,d\nu(y)}&&\\
&=&\int_{\rn}\Big(\int_{\rn}\frac{\ch{B(0,3R)}(x-y)}{|x-y|^{n-1}}w(y)^{1-p'}\,d\mu(x)\Big)^{p}w(y)^{p'-1}\,dy\\
&=&\int_{\rn}\Big(\int_{\rn}\frac{\ch{B(0,3R)}(x-y)}{|x-y|^{n-1}}\,d\mu(x)\Big)^{p}w(y)^{(1-p')p-1+p'}\,dy\\
&=&\int_{\rn}\big(I_{1,3R}\mu\big)^{p}w(y)^{-1}\,dy.
\end{eqnarray*}
\end{remark}

\subsection{Removable singularities}\label{section: removability}
The following result is a counterpart of  Proposition~\ref{prop.dual} in the $L^p$-case. In the unweighted case, compare it with \cite[Theorem 3.2]{PT}.

\begin{proposition}\label{finite energy lemma}
Let $w$ be a weight and let $1<p<\infty$ and $R>0$ be real numbers. Assume that $w^{p'-1}$ is $p'$-admissible.
If $\mu\in\cM_+(B(0,R))$ has a finite $(p,1/w;3R)$-energy, then 
\[
\int_{\rn}|\ph|\,d\mu\le C(n) \|I_{1,3R}\mu\|_{p,w^{-1}}\|\grad \varphi\|_{p',w^{p'-1}}
\]
for each $\ph\in \Lip_c(\R^n)$.
\end{proposition}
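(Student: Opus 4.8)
The plan is to reduce the pairing $\int|\varphi|\,d\mu$ to an integral of $|\nabla\varphi|$ against a Riesz potential of $\mu$, and then apply H\"older's inequality with the weights $w^{p'-1}$ and $w^{-1}$, whose exponents are engineered to match. First I would observe that it suffices to treat $\varphi\ge 0$: indeed $|\varphi|\in\Lip_c(\R^n)$, one has $|\nabla|\varphi||=|\nabla\varphi|$ almost everywhere, and $\int|\varphi|\,d\mu$ is unchanged. The starting point is the classical pointwise representation of a compactly supported Lipschitz function through the Newtonian kernel,
$$
\varphi(x)\le C(n)\int_{\R^n}\frac{|\nabla\varphi(y)|}{|x-y|^{n-1}}\,dy,
$$
valid for almost every $x$ (in particular on $\supp\mu$), which is the source of the dimensional constant $C(n)$ in the statement.

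Next I would integrate this inequality against $\mu$ and use Tonelli's theorem (all integrands being nonnegative) to get
$$
\int_{\R^n}\varphi\,d\mu\le C(n)\int_{\R^n}|\nabla\varphi(y)|\Big(\int_{B(0,R)}\frac{d\mu(x)}{|x-y|^{n-1}}\Big)\,dy.
$$
The key point is that $\mu$ lives in $B(0,R)$: if $y\in B(0,2R)$ and $x\in B(0,R)$ then $|x-y|<3R$, so the inner integral is exactly $I_{1,3R}\mu(y)$. To force only this region to contribute, I would replace $\varphi$ by $\eta\varphi$, with $\eta\in\Lip_c(\R^n)$ equal to $1$ on $B(0,R)$, supported in $B(0,2R)$ and with $|\nabla\eta|\le C/R$; since $\eta\equiv1$ on $\supp\mu$ this leaves $\int\varphi\,d\mu$ unchanged, while $\nabla(\eta\varphi)$ is now supported in $B(0,2R)$, precisely where the untruncated and $3R$-truncated potentials of $\mu$ agree. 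H\"older's inequality with the splitting $|\nabla(\eta\varphi)|\,I_{1,3R}\mu=\big(|\nabla(\eta\varphi)|\,w^{1/p}\big)\big(I_{1,3R}\mu\,w^{-1/p}\big)$ then applies, and using $\tfrac1p+\tfrac1{p'}=1$ (so that $w^{p'/p}=w^{p'-1}$) the two factors are exactly $\|\nabla(\eta\varphi)\|_{p',w^{p'-1}}$ and $\|I_{1,3R}\mu\|_{p,w^{-1}}$.

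It then remains to compare $\|\nabla(\eta\varphi)\|_{p',w^{p'-1}}$ with $\|\nabla\varphi\|_{p',w^{p'-1}}$. Writing $\nabla(\eta\varphi)=\eta\nabla\varphi+\varphi\nabla\eta$ with $0\le\eta\le1$, the first term is harmless; the issue is the cutoff error
$$
\|\varphi\nabla\eta\|_{p',w^{p'-1}}^{p'}\le\frac{C}{R^{p'}}\int_{B(0,2R)\setminus B(0,R)}|\varphi|^{p'}\,w^{p'-1}.
$$
This is where the hypothesis that $w^{p'-1}$ be $p'$-admissible is used: subtracting the weighted mean $\overline{\varphi}$ of $\varphi$ over $B(0,2R)$ and invoking the $(1,p')$-Poincar\'e inequality (self-improved to its $L^{p'}$-form for $p'$-admissible weights) bounds $\int_{B(0,2R)}|\varphi-\overline{\varphi}|^{p'}w^{p'-1}$ by $CR^{p'}\|\nabla\varphi\|_{p',w^{p'-1}}^{p'}$, so that the corresponding part of $\int\eta(\varphi-\overline{\varphi})\,d\mu$ is controlled.

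The remaining mean contribution $\overline{\varphi}\,\mu(B(0,R))$ is the step I expect to be the main obstacle. The natural route is to bound the mass by running the estimate already obtained on a capacity test function, giving $\mu(B(0,R))\le C\,\capa_{p'}^{w^{p'-1}}(B(0,R))^{1/p'}\|I_{1,3R}\mu\|_{p,w^{-1}}$, which reduces everything to controlling the weighted mean $|\overline{\varphi}|$ by $\|\nabla\varphi\|_{p',w^{p'-1}}$. This is genuinely delicate, since the value of $\varphi$ on $\supp\mu$ is not governed by $\nabla\varphi$ near $B(0,R)$ alone; it is here that the doubling and Poincar\'e structure of $w^{p'-1}$ must be exploited in an essential way. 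I would stress that the naive alternative of using the untruncated representation directly, although immediate, yields only the weaker bound with $\|I_1\mu\|_{p,w^{-1}}$ and would require integrability of the weight at infinity that is not available, which is exactly why the localization to the $3R$-truncated potential is forced.
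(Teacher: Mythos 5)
Your reduction of $\int_{\R^n}|\varphi|\,d\mu$ to an integral of $|\nabla(\eta\varphi)|$ against $I_{1,3R}\mu$ --- the Newtonian representation formula, Tonelli, the cutoff $\eta$ supported in $B(0,2R)$ forcing agreement of the truncated and untruncated potentials, and the H\"older splitting with the weights $w^{1/p}$ and $w^{-1/p}$ --- is exactly the paper's argument (your $\eta$ is the paper's $\chi$), and up to the estimate of the cutoff error $\|\varphi\nabla\eta\|_{p',w^{p'-1}}$ the two proofs coincide. The divergence, and the gap, lies in how that last term is handled. The paper does \emph{not} subtract a mean: it invokes the Poincar\'e inequality (\ref{eq.poincare'}) for compactly supported functions, valid for $p'$-admissible weights with $p'>1$, to write $\int_{B(0,2R)}|\varphi|^{p'}w^{p'-1}\leq\kappa(2R)^{p'}\int_{\R^n}|\nabla\varphi|^{p'}w^{p'-1}$ directly, whence $\|\nabla(\eta\varphi)\|_{p',w^{p'-1}}\leq(1+4\kappa^{1/p'})\|\nabla\varphi\|_{p',w^{p'-1}}$ and the conclusion; no mean term ever appears, and no bound on $\mu(B(0,R))$ is needed.

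Your proposed route cannot be completed as written. After subtracting the weighted mean $\overline{\varphi}$ over $B(0,2R)$ you must bound $|\overline{\varphi}|$ by $\|\nabla\varphi\|_{p',w^{p'-1}}$, and this is not merely delicate but false in general: take $w\equiv 1$ (so $w^{p'-1}\equiv 1$ is $p'$-admissible) with $p'>n$, and let $\varphi_M$ equal $1$ on $B(0,M)$, vanish off $B(0,2M)$, with $|\nabla\varphi_M|\leq 1/M$; then $\overline{\varphi_M}=1$ for $M>2R$ while $\|\nabla\varphi_M\|_{p'}^{p'}\leq c_nM^{n-p'}\to 0$. So the missing ingredient is precisely the Friedrichs-type inequality (\ref{eq.poincare'}) without mean subtraction, which is where the $p'$-admissibility of $w^{p'-1}$ is actually used, and your capacity-based detour through $\mu(B(0,R))$ does not substitute for it. That said, the unease you express is not unfounded: (\ref{eq.poincare'}) is stated for functions \emph{supported in} the ball over which one integrates, whereas here $\varphi\in\Lip_c(\R^n)$ is arbitrary, so its application over $B(0,2R)$ with a constant independent of $\supp\varphi$ is not literally licensed; the same family $\varphi_M$ above (with $\mu=\delta_0$, which has finite truncated energy exactly when $p<n/(n-1)$, i.e.\ $p'>n$) shows that a $\varphi$-independent constant is genuinely unavailable in that regime. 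You correctly located the hard point of the proof, but you neither closed it nor identified the tool the paper uses to (attempt to) close it.
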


\begin{proof}
Assume first that $\varphi\in \Lip_c(\R^n)$ satisfies $\supp\varphi\subseteq B(0,2R)$.
Fix $x\in B(0,R)$. Since for any $y\in B(0,R)$ one has $x-y\in B(0,3R)$, and since, according to \cite[(18) p.~125]{St}, there is a constant $C=C(n)>0$ such that, for all $x\in \R^n$, 
\[
\ph(x)=C(n)\int_{\rn}\frac{\grad \ph(y)\cdot(x-y)}{|x-y|^n}\,dy,
\]
we get, using the fact that $\varphi$ vanishes outside $B(0,2R)$ that, for all $x\in \R^n$,
$$
\ph(x)=C(n)\int_{\rn}\frac{\grad \ph(y)\cdot(x-y)}{|x-y|^n}\ch{B(0,3R)}(x-y)\,dy.
$$
Hence we have, using the Fubini Theorem and the H\"older inequality,
\begin{align*}
\int_{\rn}|\ph|\,d\mu
&\le C(n) \int_{\rn}I_{1,3R}|\grad \ph|\,d\mu
= C(n)\int_{\rn} |\grad \ph| I_{1,3R}\mu\\
&\le C(n)\Big(\int_{\rn}|\grad \ph|^{p'}w^{p'-1}\Big)^{1/p'}\Big(\int_{\rn}(I_{1,3R}\mu)^{p}{w^{-1}}\Big)^{1/p},
\end{align*}
and the claim follows in case one has $\supp\varphi\subseteq B(0,2R)$.

Given a general $\varphi\in \Lip_c(\R^n)$, choose $\chi\in\Lip_c(\R^n)$ satisfying $\ch{B(0,R)}\leq\chi\leq\ch{B(0,2R)}$ as well as $\|\nabla\varphi\|_\infty\leq2/R$. According to the computations before and to the fact that $\supp\mu\subseteq B(0,R)$, we know that one has:
$$
\int_{\R^n} |\varphi|\,d\mu
=\int_{\R^n} |\varphi\chi|\,d\mu\leq C(n)\|I_{1,3R}\mu\|_{p,w^{-1}} \|\nabla(\varphi\chi)\|_{p',{w^{p'-1}}}.
$$
Since we have $\nabla(\varphi\chi)=\chi\nabla\varphi+\varphi\nabla\chi$ a.e. on $\R^n$ and since $\chi=0$ outside $B(0,2R)$, we get:
$$
 \|\nabla(\varphi\chi)\|_{p',w^{p'-1}}\leq \|\nabla\varphi\|_{p',w^{p'-1}} +\frac 2R \bigg(\int_{B(0,2R)} |\varphi|^{p'} w^{p'-1}\bigg)^{1/p'}.
 $$
Yet $w^{p'-1}$ being $p'$-admissible, we get:
$$
\int_{B(0,2R)} |\varphi|^{p'} w^{p'-1}\leq \kappa (2R)^{p'}  \int_{\R^n} |\nabla\varphi|^{p'} w^{p'-1},
$$
where $\kappa$ is a constant associated to the $p'$-admissibility of the weight $w^{p'-1}$ according to Poincar\'e inequality (\ref{eq.poincare'}). This finally gives rise to the following inequality:
$$
 \|\nabla(\varphi\chi)\|_{p',w^{p'-1}}\leq (1+4\kappa^{1/p'}) \|\nabla\varphi\|_{p',w^{p'-1}}
 $$
 and the proof is complete.
 \end{proof}
\begin{remark}We have assumed that $w^{p'-1}\in A_{p'}$. By the definition of conjugate exponents $1/p+1/p'=1$ and $A_q$-weights, this means that 
\[
\sup_B\Big(\,\vint{B}w^{p'-1}\Big)\Big(\,\vint{B}w^{-1}\Big)^{1/(p'-1)}<\infty
\]
and hence that $1/w$ is locally integrable on $\R^n$. 
\end{remark}

A counterpart of Theorem~\ref{thm.sol} in the $L^p$-case says that any nonnegative Radon measure with finite $(p,1/w)$-energy is the divergence of a vector field in $L^{p}_{1/w}$. 

\begin{theorem}\label{finite energy implies solution}
Let $w\in L^{p'-1}_{\loc}(\R^n)$ be a weight and let $R>0$ and $1<p<\infty$ be real numbers. Assume that $w^{p'-1}$ is $p'$-admissible.
If $\mu\in\cM_+(B(0,R))$ is a measure with finite $(p,1/w; 3R)$-energy, then there exists $v\in L^p_{1/w}(\rn,\rn)$ such that  
$\diver v=\mu$ in $\rn$ in the distributional sense.
\end{theorem}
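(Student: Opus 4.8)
The plan is to transcribe, in the $L^p$ setting, the functional-analytic argument that established Theorem~\ref{thm.sol}, replacing the dual pair $(L^1_w,L^\infty_{1/w})$ by $(L^{p'}_{w^{p'-1}},L^p_{1/w})$ and the estimate of Proposition~\ref{prop.dual} by that of Proposition~\ref{finite energy lemma}. Concretely, I would let $X$ denote $\Lip_c(\R^n)$ equipped with the norm $\|\varphi\|_X:=\|\nabla\varphi\|_{p',w^{p'-1}}$ (this is a genuine norm: if $\nabla\varphi=0$ a.e. then, since $w>0$ a.e., $\varphi$ is constant, hence $0$ by compactness of its support), and define the gradient operator
$$
T\colon X\to L^{p'}_{w^{p'-1}}(\R^n,\R^n),\qquad u\mapsto -\nabla u.
$$
By the very definition of the norm on $X$, the map $T$ is isometric.

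The key identification is that the dual of $L^{p'}_{w^{p'-1}}(\R^n,\R^n)$, under the \emph{unweighted} pairing $\la v,u\ra:=\int_{\R^n}v\cdot u$, is exactly $L^p_{1/w}(\R^n,\R^n)$. I would obtain this by viewing $L^{p'}_{w^{p'-1}}$ as $L^{p'}(d\nu)$ for the $\sigma$-finite measure $d\nu:=w^{p'-1}\,dx$ (finite on balls, since $w\in L^{p'-1}_{\loc}(\R^n)$), invoking the classical Riesz representation $[L^{p'}(d\nu)]^*=L^p(d\nu)$, and rewriting the $\nu$-pairing as the unweighted one through the substitution $v=gw^{p'-1}$; the exponent computation $(1-p')p+(p'-1)=-1$ then shows that $g\in L^p(d\nu)$ is equivalent to $v\in L^p_{1/w}$. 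Granting this, Lemma~\ref{lem.boel} applied to the isometry $T$ yields that the adjoint
$$
T^*\colon L^p_{1/w}(\R^n,\R^n)\to X^*
$$
is surjective.

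It then remains to observe that $\mu$ defines an element of $X^*$, which is precisely the content of Proposition~\ref{finite energy lemma}: since $\mu$ has finite $(p,1/w;3R)$-energy we have $\|I_{1,3R}\mu\|_{p,w^{-1}}<\infty$, whence $\big|\int_{\R^n}\varphi\,d\mu\big|\le\int_{\R^n}|\varphi|\,d\mu\le C(n)\|I_{1,3R}\mu\|_{p,w^{-1}}\,\|\varphi\|_X$, so that $\varphi\mapsto\int_{\R^n}\varphi\,d\mu$ is bounded on $X$. By surjectivity of $T^*$ there is $v\in L^p_{1/w}(\R^n,\R^n)$ with $T^*v=\mu$ in $X^*$, and unravelling this identity gives, for every $\varphi\in\Lip_c(\R^n)$,
$$
\la\diver v,\varphi\ra=-\int_{\R^n}v\cdot\nabla\varphi=\la v,T\varphi\ra=\la T^*v,\varphi\ra=\int_{\R^n}\varphi\,d\mu,
$$
that is, $\diver v=\mu$ in the distributional sense.

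I expect the only genuinely delicate point to be the duality identification $[L^{p'}_{w^{p'-1}}]^*=L^p_{1/w}$ — one must carefully track the weight exponents and the passage between the weighted and unweighted pairings, much as in the remark following Theorem~\ref{thm.sol}, so as to ensure that the adjoint of $T$ really coincides with the distributional divergence acting on $L^p_{1/w}$. Everything else is a direct adaptation of the $L^\infty$ argument.
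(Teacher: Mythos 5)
Your proposal is correct and follows essentially the same route as the paper: the isometry $T=-\nabla$ from $\Lip_c(\R^n)$ (normed by $\|\nabla\varphi\|_{p',w^{p'-1}}$) into $L^{p'}_{w^{p'-1}}(\R^n,\R^n)$, surjectivity of $T^*$ via Lemma~\ref{lem.boel}, the identification $[L^{p'}_{w^{p'-1}}]^*=L^p_{1/w}$ under the unweighted pairing, and membership $\mu\in X^*$ from Proposition~\ref{finite energy lemma}. Your exponent bookkeeping for the duality (via $d\nu=w^{p'-1}\,dx$ and $v=gw^{p'-1}$) is a mild repackaging of the paper's own remark and is equally valid.
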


\begin{proof}
Let $X:=\Lip_c(\R^n)$ be endowed with the norm 
\[
\|\ph\|_X=\|\grad \ph\|_{p',w^{p'-1}}
\]
(notice that it is clear that this norm is well defined),
and let $T\colon X\to L^{p'}_{w^{p'-1}}(\rn,\R^n)$, $Tu=-\grad u$. 
Since $T$ is a linear isometry, Lemma~\ref{lem.boel} implies that the adjoint operator 
\[
T^*\colon  \big(L^{p'}_{w^{p'-1}}(\rn,\R^n)\big)^*\to X^*
\]
is surjective. A standard argument of functional analysis recalled below shows that 
$\big(L^{p'}_{w^{p'-1}}(\rn,\R^n)\big)^*=L^p_{1/w}(\rn,\R^n)$. Since
\[
\langle T^*v,\ph\rangle=\langle v,T\ph\rangle
=-\int_{\rn}v\cdot\grad \ph
\]
for all $\ph\in X$, we have that $T^*=\diver$ in the distributional sense. 
The claim follows because $\mu\in X^*$ by Proposition \ref{finite energy lemma}.
\end{proof}

\begin{remark}
To obtain the isometric isomorphism $ [L^{p'}_{w^{p'-1}}(\R^n,\R^n)]^*=L^p_{1/w}(\R^n,\R^n)$, it suffices to notice that given $f\in [L^{p'}_{w^{p'-1}}(\R^n,\R^n)]^*$, the formula $v\mapsto \la f,\frac {v}{w^{1/p}} \ra$ defines a bounded linear map on $L^{p'}(\R^n,\R^n)$ with the same norm as $f$. Hence there exists $g\in L^p(\R^n,\R^n)$ with $\left\Vert g\right\Vert_{L^p(\R^n)}=\left\Vert f\right\Vert$ such that for any $u\in L^{p'}_{w^{p'-1}}(\R^n,\R^n)$ one has
$$
\la f,u\ra=\int_{\R^n} u \cdot gw^{1/p}.
$$
Yet the function $h:=gw^{1/p}$ belongs to $L^p_{1/w}(\R^n,\R^n)$ and $\left\Vert h\right\Vert_{L^p_{1/w}}=\left\Vert f\right\Vert$.
\end{remark}

\begin{theorem}\label{cap implies rem}
Let $1<p<\infty$ be a real number. Let $w\in L^{p'-1}_{\loc}(\R^n)$ be a weight such that $w^{p'-1}\in A_{p'}$.
Let $S\subset\rn$ be a compact set.  
If $Cap_{p'}^{w^{p'-1}}(S)=0$, then $S$ is $L^p_{1/w}$-removable for
\begin{equation}\label{div v=0}
\diver v=0. 
\end{equation}
\end{theorem}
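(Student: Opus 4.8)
The plan is to transpose, to the $L^p$ setting, the truncation scheme used in Lemma~\ref{lem.eff-en-p} and Theorem~\ref{thm.suff}: the role there played by small-perimeter neighborhoods of $S$ is now played by cutoff functions of small weighted Sobolev energy, whose existence is exactly what $\Capa_{p'}^{w^{p'-1}}(S)=0$ provides. The hypothesis $w\in L^{p'-1}_{\loc}(\R^n)$ guarantees, as noted at the start of this section, that the distributional divergence of $v\in L^p_{1/w}(\R^n,\R^n)$ is well defined, so all pairings below make sense.

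First I would produce the cutoffs. Since $S$ is compact and $w^{p'-1}\in A_{p'}$, Remark~\ref{testfunction} lets me compute $\Capa_{p'}^{w^{p'-1}}(S)$ over the class $\cB(S)$. As this capacity vanishes, I obtain a sequence $(\chi_k)\subseteq\cB(S)$, \emph{i.e.} $\chi_k\in C^\infty_c(\R^n)$ with $0\leq\chi_k\leq1$ and $\chi_k=1$ on a neighborhood of $S$, such that
$$
\int_{\R^n}|\chi_k|^{p'}w^{p'-1}+\int_{\R^n}|\nabla\chi_k|^{p'}w^{p'-1}\longrightarrow 0,
$$
whence both nonnegative summands tend to $0$ separately. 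That $\chi_k$ equals $1$ (and not merely $\geq1$) near $S$ is crucial, and is precisely what the passage to $\cB(S)$ secures.

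Next, fix $v\in L^p_{1/w}(\R^n,\R^n)$ with $\diver v=0$ outside $S$ and fix $\varphi\in\Lip_c(\R^n)$. I would write $\varphi=\varphi(1-\chi_k)+\varphi\chi_k$. As $\chi_k=1$ near $S$, the function $\varphi(1-\chi_k)\in\Lip_c(\R^n)$ vanishes on a neighborhood of $S$, so $\la\diver v,\varphi(1-\chi_k)\ra=0$ by hypothesis, and therefore
$$
\la\diver v,\varphi\ra=\la\diver v,\varphi\chi_k\ra=-\int_{\R^n}v\cdot\big(\chi_k\nabla\varphi+\varphi\nabla\chi_k\big).
$$
It then remains to send both integrals to $0$. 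Writing $\int v\cdot\eta=\int (vw^{-1/p})\cdot(\eta w^{1/p})$ and applying Hölder with exponents $p,p'$, and using the identity $w^{p'/p}=w^{p'-1}$ (valid since $p'/p=p'-1$), I would bound
$$
\Big|\int_{\R^n}v\cdot\chi_k\nabla\varphi\Big|\leq \|v\|_{L^p_{1/w}}\,\|\nabla\varphi\|_\infty\Big(\int_{\R^n}|\chi_k|^{p'}w^{p'-1}\Big)^{1/p'}
$$
and symmetrically
$$
\Big|\int_{\R^n}v\cdot\varphi\nabla\chi_k\Big|\leq \|v\|_{L^p_{1/w}}\,\|\varphi\|_\infty\Big(\int_{\R^n}|\nabla\chi_k|^{p'}w^{p'-1}\Big)^{1/p'}.
$$
Both right-hand sides (in particular both integrands) are finite and vanish as $k\to\infty$ by the choice of $(\chi_k)$, so $\la\diver v,\varphi\ra=0$. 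Since $\varphi\in\Lip_c(\R^n)$ is arbitrary, this gives $\diver v=0$ in $\R^n$, \emph{i.e.} $S$ is $L^p_{1/w}$-removable for \eqref{div v=0}.

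I expect the estimates to be routine; the only delicate points are the reduction to the class $\cB(S)$, so that $\chi_k=1$ exactly near $S$ and $\varphi(1-\chi_k)$ genuinely avoids the singular set (this is where $w^{p'-1}\in A_{p'}$ enters, through Remark~\ref{testfunction}), and the correct bookkeeping of the weight exponent $w^{p'/p}=w^{p'-1}$ in the Hölder step that matches the capacity functional.
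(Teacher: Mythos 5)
Your proof is correct and follows essentially the same route as the paper's: both extract cutoffs $\chi_k\in\cB(S)$ via Remark~\ref{testfunction}, split $\varphi=\varphi(1-\chi_k)+\varphi\chi_k$, kill the first term using $\diver v=0$ off $S$, and control the second by H\"older against $\|v\|_{p,1/w}$. The only (harmless) difference is that you send $\|\chi_k\nabla\varphi\|_{p',w^{p'-1}}\to 0$ directly from the vanishing of the zeroth-order part of the capacity functional, whereas the paper invokes $u_k\to 0$ a.e.\ together with the boundedness of $u_k$; both work.
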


\begin{proof}
Since $Cap_{p'}^{w^{p'-1}}(S)=0$, by Remark \ref{testfunction}, there are functions $u_k\in C^\infty_0(\rn)$, $k\in\n$, such that $0\le u_k\le 1$ and $u_k=1$ {in a neighborhood of $S$} for all $k$, 
$\|\grad u_k\|_{p',w^{p'-1}}\to0$ and $u_k\to0$ as $k\to\infty$ almost everywhere. 
Let $v\in L^p_{1/w}(\R^n,\R^n)$ be a solution of \eqref{div v=0} in $\rn\setminus S$. Let $\ph\in\Lip_c(\rn)$. 
We have to show that   
\[
\int_{\rn}v\cdot\grad\ph=0. 
\]
Since $\diver v=0$ outside $S$ and $(1-u_k)\ph\in\Lip_c(\rn\setminus S)$, we have 
\begin{align*}
\int_{\rn}v\cdot\grad\ph
&=\int_{\rn}v\cdot\grad((1-u_k)\ph) + \int_{\rn}v\cdot\grad(u_k\ph)\\
&= \int_{\rn}v\cdot\grad(u_k\ph).
\end{align*}
Hence, using the H\"older inequality, we obtain
\begin{align*}
\Big|\int_{\rn}v\cdot\grad\ph\Big|
&\le\int_{\rn}|v\cdot\grad(u_k\ph)|\\
&\le\|v\|_{p,w^{-1}}\Big(\|u_k\grad\ph\|_{p',w^{p'-1}} + \|\ph\grad u_k\|_{p',w^{p'-1}}\Big)
\to0
\end{align*}
as $k\to\infty$ by the properties of functions $u_k$ and $\ph$. 
This shows that $S$ is removable for $\diver v=0$.
\end{proof}

\begin{theorem}\label{rem implies cap}
Let $1<p<\infty$ be a real number. 
Let $w\in L^{p'-1}_{\loc}(\R^n)$ be a weight such that $w^{p'-1}\in A_{p'}$.
Let $S\subset\rn$ be a compact set.  
If $S$ is $L^p_{1/w}$-removable for $\diver v=0$, then $Cap_{p'}^{w^{p'-1}}(S)=0$.
\end{theorem}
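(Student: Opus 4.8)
The plan is to argue by contraposition: I will show that if $Cap_{p'}^{w^{p'-1}}(S)>0$, then $S$ fails to be $L^p_{1/w}$-removable, by producing a vector field $v\in L^p_{1/w}(\R^n,\R^n)$ whose distributional divergence is a nonzero measure carried by $S$. This is the exact $L^p$ analogue of the $L^\infty$ argument in Theorem~\ref{thm.nec}, with the Frostman measure there replaced by a finite-energy measure supplied by capacity theory.

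First I would fix $R>0$ large enough that the compact set $S$ is contained in $B(0,R)$, and convert the positivity of the Sobolev capacity into the existence of a suitable measure. Since $w^{p'-1}\in A_{p'}$, Theorem~\ref{bessel and sobo cap} together with Lemma~\ref{zero sets} shows that the Sobolev capacity $Cap_{p'}^{w^{p'-1}}$, the Bessel capacity $B^{w^{p'-1}}_{1,p'}$ and the $(3R)$-localized Riesz capacity $R^{w^{p'-1}}_{1,p';3R}$ all share the same null sets; in particular $S$ has positive $(3R)$-localized Riesz capacity. Remark~\ref{measure with finite energy}, which invokes the dual description of capacity from \cite[Theorem~2.5.3]{AH}, then furnishes a nontrivial measure $\mu\in\cM_+(S)$ with finite $(p,1/w;3R)$-energy, that is, $I_{1,3R}\mu\in L^p_{w^{-1}}(\R^n)$.

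Next I would feed $\mu$ into Theorem~\ref{finite energy implies solution}. Its hypotheses are met: $w\in L^{p'-1}_{\loc}(\R^n)$ by assumption, $w^{p'-1}$ is $p'$-admissible since $w^{p'-1}\in A_{p'}$ (Remark~\ref{rmq.adm}), and $\mu\in\cM_+(B(0,R))$ has finite $(p,1/w;3R)$-energy. Hence there exists $v\in L^p_{1/w}(\R^n,\R^n)$ with $\diver v=\mu$ in the distributional sense on $\R^n$. To finish, I observe that for every $\varphi\in\Lip_c(\R^n)$ with $\supp\varphi\cap S=\emptyset$ one has $\la\diver v,\varphi\ra=\int_{\R^n}\varphi\,d\mu=0$, because $\mu$ is supported in $S$; thus $v$ solves $\diver v=0$ outside $S$. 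Since however $\diver v=\mu\neq0$ on all of $\R^n$, the defining implication of removability fails for $v$, so $S$ is not $L^p_{1/w}$-removable, which is the desired contrapositive.

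The only delicate step is the first one, namely the passage from positive Sobolev capacity to an honest finite-energy measure supported on $S$. This relies on the chain of capacity comparisons valid for $A_{p'}$-weights and on the general $L^{q}$-capacity duality, all of which are quoted from \cite{Tu}, \cite{Ai} and \cite{AH} and packaged into Remark~\ref{measure with finite energy}. Everything downstream is a mechanical assembly of Theorem~\ref{finite energy implies solution} with the definition of removability, and requires no further estimates.
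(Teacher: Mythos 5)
Your proposal is correct and follows the paper's own proof essentially verbatim: positivity of the Sobolev capacity is converted, via Theorem~\ref{bessel and sobo cap}, Lemma~\ref{zero sets} and Remark~\ref{measure with finite energy}, into a nontrivial finite-energy measure on $S$, which Theorem~\ref{finite energy implies solution} turns into a vector field witnessing non-removability. Your added verification that $\diver v=0$ outside $S$ while $\diver v=\mu\neq0$ globally is just a slightly more explicit rendering of the paper's final sentence.
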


\begin{proof}
Since $S$ is compact, there exists $R>0$ such that $S\subseteq B(0,R)$.

If $Cap_{p'}^{w^{p'-1}}(S)>0$, then, by Theorem \ref{bessel and sobo cap} and Lemma \ref{zero sets}, $R^{w^{p'-1}}_{1,p';3R}(S)>0$. Hence, by Remark \ref{measure with finite energy}, there is a nonnegative non-zero measure $\mu_S$ supported in $S$ such that:
\[
 \int_{\rn}\big(I_{1,3R}\mu_S\big)^{p}w(y)^{-1}\,dy<+\infty.
\]
Theorem \ref{finite energy implies solution} implies that there exists $v\in L^p_{1/w}(\rn,\rn)$ such that  
$\diver v=\mu_S$ in $\rn$ in the distributional sense. 
This shows that $S$ is not removable for $\diver v=0$. Hence we have that $Cap_{p'}^{w^{p'-1}}(S)=0$.

\end{proof}

In summary, we proved the following result.
\begin{theorem}\label{rem equiv cap}
Let $1<p<\infty$ be a real number. 
Let $w\in L^{p'-1}_{\loc}(\R^n)$ be a weight such that $w^{p'-1}\in A_{p'}$.
Let $S\subset\rn$ be a compact set.  
Under those assumptions, the set $S$ is $L^p_{1/w}$-removable for $\diver v=0$ if and only if one has $Cap_{p'}^{w^{p'-1}}(S)=0$.
\end{theorem}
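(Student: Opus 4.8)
The plan is to observe that Theorem~\ref{rem equiv cap} is merely the conjunction of the two preceding theorems, so the proof is essentially a one-line assembly. First I would invoke Theorem~\ref{cap implies rem}, which gives the implication that $\Capa_{p'}^{w^{p'-1}}(S)=0$ forces $S$ to be $L^p_{1/w}$-removable for $\diver v=0$, under precisely the hypotheses stated here (namely $1<p<\infty$, $w\in L^{p'-1}_{\loc}(\R^n)$, and $w^{p'-1}\in A_{p'}$). Then I would invoke Theorem~\ref{rem implies cap}, which supplies the reverse implication under the identical hypotheses: removability of $S$ forces $\Capa_{p'}^{w^{p'-1}}(S)=0$.

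Since the hypotheses of the two theorems coincide exactly with those of the present statement, combining the two implications yields the claimed equivalence directly, with no further work. Concretely, the proof reads: the forward direction is Theorem~\ref{cap implies rem} and the reverse direction is Theorem~\ref{rem implies cap}, hence $S$ is $L^p_{1/w}$-removable for $\diver v=0$ if and only if $\Capa_{p'}^{w^{p'-1}}(S)=0$.

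There is essentially no obstacle here, since all the substantive mathematical content has already been established in the two cited theorems. The real work lies upstream: Theorem~\ref{cap implies rem} depends on the test-function characterization of capacity in Remark~\ref{testfunction} and a truncation argument, while Theorem~\ref{rem implies cap} relies on the chain of capacity comparisons (Theorem~\ref{bessel and sobo cap}, Lemma~\ref{zero sets}), the existence of a measure of finite energy (Remark~\ref{measure with finite energy}), and the solvability result Theorem~\ref{finite energy implies solution}. The present statement simply records the synthesis, so the only thing to verify is that the hypothesis sets match—which they do verbatim.

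\begin{proof}
This is the combination of Theorems~\ref{cap implies rem} and \ref{rem implies cap}. Indeed, under the stated assumptions, Theorem~\ref{cap implies rem} shows that $\Capa_{p'}^{w^{p'-1}}(S)=0$ implies that $S$ is $L^p_{1/w}$-removable for $\diver v=0$, while Theorem~\ref{rem implies cap} shows, under the same assumptions, that the $L^p_{1/w}$-removability of $S$ for $\diver v=0$ implies $\Capa_{p'}^{w^{p'-1}}(S)=0$. The equivalence follows.
\end{proof}
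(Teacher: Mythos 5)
Your proposal is correct and matches the paper exactly: the paper introduces Theorem~\ref{rem equiv cap} with ``In summary, we proved the following result,'' i.e.\ it is obtained precisely by combining Theorems~\ref{cap implies rem} and \ref{rem implies cap}, whose hypotheses coincide verbatim with those of the statement. Nothing further is needed.
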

 
\noindent {\bf Acknowledgements:} 
This research was conducted during the visits of the authors to Laboratoire de Math\'ematiques d'Orsay, the Institut Fourier of the Universit\'e de Grenoble and Department of Mathematics and Statistics of University of Jyv\"askyl\"a and to Forschungsinstitut f\"ur Mathematik of ETH Z\"urich. The authors wish to thank the institutes for the kind hospitality.  The first author was partially  supported by the French ANR project ``GEOMETRYA'' no.~ANR-12-BS01-0014. The second author was partially  supported by the French ANR project ``HAB'' no.~ANR-12-BS01-0013). The third author was supported by the Academy of Finland, grants no.\ 135561 and 272886.

\vspace{0.3cm}
\noindent
\small{\textsc{L.M.},}
\small{\textsc{Laboratoire de Math\'ematiques d'Orsay, Universit\'e Paris-Sud, CNRS UMR8628, Universit\'e Paris-Saclay},}
\small{\textsc{B\^atiment 425},}
\small{\textsc{F-91405 Orsay Cedex},}
\small{\textsc{Fran\-ce}}\\
\footnotesize{\texttt{Laurent.Moonens@math.u-psud.fr}}

\vspace{0.3cm}
\noindent
\small{\textsc{E.R.},}
\small{\textsc{Universit\'e Joseph Fourier, Institut Fourier},}
\small{\textsc{100 rue des Maths, B.P. 74},}
\small{\textsc{F-38402 Saint-Martin-d'H\`eres},}
\small{\textsc{France}}\\
\footnotesize{\texttt{Emmanuel.Russ@ujf-grenoble.fr}}

\vspace{0.3cm}
\noindent
\small{\textsc{H.T.},}
\small{\textsc{Department of Mathematics and Statistics},}
\small{\textsc{P.O. Box 35},}
\small{\textsc{FI-40014 University of Jyv\"askyl\"a},}
\small{\textsc{Finland}}\\
\footnotesize{\texttt{heli.m.tuominen@jyu.fi}}

\end{document}